\newcommand{\ZZ}{\mathbb{Z}}
\newcommand{\vB}{\mathcal{B}}
\newcommand{\vC}{\mathcal{C}}
\newcommand{\vJ}{\mathcal{J}}
\newcommand{\vM}{\mathcal{M}}
\newcommand{\vN}{\mathcal{N}}
\newcommand{\vQ}{\mathcal{Q}}
\newcommand{\vR}{\mathcal{R}}
\newcommand{\vS}{\mathcal{S}}
\newcommand{\vV}{\mathcal{V}}
\newcommand{\vSP}{\mathcal{SP}}
\newcommand{\vDS}{\mathcal{DS}}
\newcommand{\ba}{{\bf a}}
\newcommand{\bb}{{\bf b}}
\newcommand{\bc}{{\bf c}}
\newcommand{\bd}{{\bf d}}
\newcommand{\bu}{{\bf u}}
\newcommand{\bv}{{\bf v}}
\newcommand{\bw}{{\bf w}}
\newcommand{\bx}{{\bf x}}
\newcommand{\bz}{{\bf z}}
\newcommand{\Aut}{\operatorname{Aut}}
\newcommand{\Comm}{\operatorname{Comm}}
\newcommand{\Mod}{\operatorname{Mod}}
\newcommand{\Mode}{\operatorname{Mod}^\pm}
\newcommand{\Out}{\operatorname{Out}}
\newcommand{\PMod}{\operatorname{PMod}}
\newcommand{\Lk}{\operatorname{Link}}
\newcommand{\Ex}{\operatorname{Ex}}
\newcommand{\sm}{\setminus}
\newcommand{\wt}[1]{\widetilde{#1}}
\newcommand{\wh}[1]{\widehat{#1}}
\newcommand{\eand}{\quad \text{ and } \quad}
\definecolor{lightgrey}{gray}{.85}
\theoremstyle{definition}
\theoremstyle{plain}
\newtheorem{thm}{Theorem}[section]
\newtheorem{main}{Theorem}
\newtheorem{lem}[thm]{Lemma}
\newtheorem{cor}[thm]{Corollary}
\newtheorem{prop}[thm]{Proposition}
\newtheorem{meta}[thm]{Metaconjecture}
\theoremstyle{definition}
\begin{document}
\title[Geometric normal subgroups in mapping class groups]{Geometric normal subgroups in mapping class groups of punctured surfaces}
\author{Alan McLeay}
\address{Mathematics Research Unit, University of Luxembourg, Esch-sur-Alzette, Luxembourg}
\email{mcleay.math@gmail.com}
\maketitle

\begin{abstract}
We prove that many normal subgroups of the extended mapping class group of a surface with punctures are geometric, that is, that their automorphism groups and abstract commensurator groups are isomorphic to the extended mapping class group.  In order to apply our theorem to a normal subgroup we require that the ``minimal supports'' of its elements satisfy a certain complexity condition that is easy to check in practice.  The key ingredient is proving that the automorphism groups of many simplicial complexes associated to punctured surfaces are isomorphic to the extended mapping class group.  This resolves many cases of a metaconjecture of N. V. Ivanov and extends work of Brendle-Margalit, who prove the result for surfaces without punctures.
\end{abstract}


\section{Introduction}\label{introduction}
The \emph{mapping class group} $\Mod(\Sigma)$ is the group of symmetries of an oriented surface $\Sigma$.  In more formal language it is the group of isotopy classes of orientation-preserving self-homeomorphisms of $\Sigma$, relative to boundary.  When denoting a specific surface we may use the notation $\Sigma_{g,n}^m$ for a surface homeomorphic to the complement of $n$ singular points and $m$ open discs in a closed surface of genus $g$.  We say that $\Sigma_{g,n}^m$ has $m$ \emph{boundary components} and $n$ \emph{punctures}.  If $\Sigma = \Sigma_{g,n}^m$ then we define $g(\Sigma) := g$ and $n(\Sigma) := n$.  When a surface has no boundary components we omit the superscript and when the surface has no punctures we usually omit the second subscript.

The \emph{extended mapping class group} $\Mode(\Sigma)$ of $\Sigma$ is the group of isotopy class of \emph{all} self-homeomorphisms of $\Sigma$, including the orientation-reversing ones.  We say that a normal subgroup $N$ of $\Mode(\Sigma)$ is \emph{geometric} if it has $\Mode(\Sigma)$ as its group of automorphisms.  In his seminal paper, Ivanov showed that if $\Sigma$ has genus at least three, or is a punctured surface of genus two, then $\Mod(\Sigma)$ is geometric \cite{IV}.  The equivalent result was given by Korkmaz for punctured tori and punctured spheres \cite{KOR}.  The proofs of these results use the action of $\Mode(\Sigma)$ on the \emph{curve complex}, a simplicial flag complex associated to $\Sigma$ which we define in Section \ref{introduction_complexes}.  Ivanov's result, and proof, acted as a springboard for a series of related results; see Bavard-Dowdall-Rafi \cite{BDR}, Brendle-Margalit \cite{Sep}, Bridson-Pettet-Souto \cite{BPS}, Irmak \cite{Nonsep}, and Kida \cite{Kida}, among many others.

\subsection{Main theorem on geometric normal subgroups}\label{introduction_normal}

In this paper we will show that many normal subgroups of $\Mode(\Sigma)$ are geometric.  The proof of this result extends work of Brendle-Margalit, who proved the theorem in the case of closed surfaces, that is, where $\Sigma = \Sigma_{g,0}$ \cite{BM17}.  In fact, these results also determine $\Comm N$, the \emph{group of abstract commensurators} of the normal subgroup $N$.  Recall that elements of $\Comm N$ are equivalence classes of isomorphisms between finite index subgroups of $N$.  Here, two isomorphisms are equivalent if they agree on some common finite index subgroup.  In this sense, the elements of $\Comm N$ are \emph{virtual} automorphisms.

Roughly, the theorem requires that some elements of the normal subgroup are supported in subsurfaces that are topologically ``small enough''.  To that end, for a mapping class $f \in \Mode(\Sigma)$ we write $R_f$ for a single-boundary subsurface such that $f$ is supported in $R_f$ and $f$ is not supported in any single-boundary proper subsurface of $R_f$.  It follows that $R_f \cong \Sigma_{k,l}^1$ some $k \le g$ and $l \le n$.  Note that there are some elements of $f$ for which $R_f$ is not defined, for example, if the support of $f$ is the entire surface $\Sigma$.

\subsection*{Elements of minimal support}
Fix a normal subgroup $N$ of $\Mode(\Sigma)$.  We say that $f \in N$ is of \emph{minimal support} if for all elements $h \in N$ such that $R_h \subset R_f$ we have that $R_h$ and $R_f$ are homeomorphic.

Consider a closed surface with positive genus, or a punctured sphere.  If $f,h \in N$ are two elements which both have minimal support then $R_f$ and $R_h$ must be homeomorphic.  For punctured surfaces with positive genus this is not true in general.

\subsection*{Elements of small support}

Let $\Sigma = \Sigma_{g,n}$ and let $N$ be a normal subgroup of $\Mode(\Sigma)$. We say that $f \in N$ is of \emph{small support} if there exist elements $h_1, h_2 \in N$ such that
\begin{align}
g &\ge g(R_f) + \max \{ g(R_{h_1}) + g(R_{h_2}) , 2\} + 1, \mbox{ and} \\ 
n &\ge n(R_f) + \max \{ n(R_{h_1}) + n(R_{h_2}) , 1 \} + 1.
\end{align}
If $g=0$ or $n=0$ we may ignore $(1)$ and $(2)$ respectively.

\begin{main}\label{AutComm}
Let $N$ be a normal subgroup of $\Mode(\Sigma)$.  If every element of minimal support in $N$ is of small support then the natural homomorphisms
\[
\Mode(\Sigma) \to \Aut N \to \Comm N
\]
are isomorphisms.
\end{main}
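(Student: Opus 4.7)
The plan is to follow the Ivanov-style strategy used by Brendle--Margalit in the closed case, reducing the algebraic question about $\Aut N$ and $\Comm N$ to a purely combinatorial question about the automorphism group of a suitable simplicial complex built from $\Sigma$. Concretely, I would factor the two maps as
\[
\Mode(\Sigma) \xrightarrow{\ \Phi\ } \Aut N \xrightarrow{\ \Psi\ } \Comm N,
\]
where $\Phi$ sends a mapping class to conjugation on $N$ and $\Psi$ sends an automorphism to its commensuration class. The strategy is to prove that $\Psi \circ \Phi$ is an isomorphism (by constructing a left inverse), and that $\Phi$ is injective; then $\Phi$ and $\Psi$ will automatically both be isomorphisms.

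For injectivity of $\Phi$: if $f \in \Mode(\Sigma)$ centralizes $N$, then by the small-support hypothesis $N$ contains elements of minimal support $h$ whose supports $R_h$ can be realized, up to normal closure, in many topologically distinct positions in $\Sigma$. Commuting with a sufficiently rich family of such elements (e.g.\ high powers that are Dehn twists about the boundaries of $R_h$) forces $f$ to preserve each such curve, and by a standard Alexander-method argument $f$ must be trivial.

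The heart of the proof is constructing a simplicial complex $\vK(\Sigma, N)$ whose vertex set is the collection of isotopy classes of subsurfaces $R_f$ for $f \in N$ of minimal support, with simplices spanned by disjoint representatives. The small-support hypothesis guarantees two things: first, that different homeomorphism types of minimal supports can coexist disjointly in $\Sigma$ with room to spare (this is the role of the $+1$'s in conditions $(1)$ and $(2)$), which makes $\vK$ connected and rich enough; second, that the complex contains enough vertices of each relevant topological type to apply (a punctured-surface version of) the complex-of-regions theorem alluded to in the abstract, giving $\Aut \vK(\Sigma, N) \cong \Mode(\Sigma)$. I would then show that an arbitrary virtual automorphism $\alpha \in \Comm N$ (represented by an isomorphism $N_0 \to N_1$ between finite index subgroups) induces an automorphism $\alpha_* \in \Aut \vK(\Sigma, N)$. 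The two key algebraic characterizations needed are: (i) minimal-support elements of $N$ can be detected group-theoretically using centralizers and commutativity patterns in $N_0 \cap N_1$ so that $\alpha$ carries them to minimal-support elements of the same topological type; and (ii) disjointness of supports corresponds to an algebraic condition, again visible in any finite-index subgroup, so that $\alpha$ preserves simplices of $\vK$. Since a sufficiently high power of any element of $N$ lies in $N_0$, the induced $\alpha_*$ is well-defined on the entire complex.

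Composing with the identification $\Aut \vK(\Sigma, N) \cong \Mode(\Sigma)$ yields a mapping class $F_\alpha$, and one checks that $\alpha$ agrees with conjugation by $F_\alpha$ on a finite index subgroup by comparing their actions on large Dehn-twist subgroups detected through the complex. This gives the left inverse $\Comm N \to \Mode(\Sigma)$ to $\Psi \circ \Phi$, completing the proof.

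The main obstacle I expect is step (i): giving a purely algebraic recognition of minimal-support elements that is robust enough to survive passing to a finite-index subgroup, so that it works not just for $\Aut N$ but for $\Comm N$. In the closed case, Brendle--Margalit exploit the rigidity of centralizers of Dehn twists and partial pseudo-Anosov elements; in the punctured setting one must be careful because punctures can be permuted by mapping classes, so the collection of minimal-support topological types is no longer a single orbit. The small-support condition is precisely tailored to handle this, but extracting the right algebraic invariants from it — in a way uniform across finite-index subgroups — will require the bulk of the technical work, and likely motivates the precise form of conditions $(1)$ and $(2)$.
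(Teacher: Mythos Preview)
Your high-level strategy is correct and matches the paper's: build a complex of regions from $N$, show its automorphism group is $\Mode(\Sigma)$ via Theorem~\ref{BigDaddy}, and construct an injection $\Comm N \to \Aut$ of that complex. The diagram-chase at the end is exactly as you describe.

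However, your implementation diverges from the paper's at the crucial point you yourself flag as the main obstacle. You propose building $\vK(\Sigma,N)$ from the supports $R_f$ of individual minimal-support elements, and then characterizing such elements algebraically via centralizers. The paper does \emph{not} attempt this. Instead it works with \emph{basic subgroups}: non-abelian subgroups of (a finite-index pure subgroup of) $N$ that are minimal with respect to the strict partial order $H \prec H'$ iff $C_G(H') \subsetneq C_G(H)$. Because this order is defined purely in terms of centralizers and non-abelianness, basic subgroups are automatically carried to basic subgroups by any isomorphism of finite-index subgroups --- no separate algebraic recognition lemma is needed. The vertices of the complex $\vC_N(\Sigma)$ are the supports of basic subgroups (which are general regions, not single-boundary enveloping regions), and the map $\Comm N \to \Aut \vC_N(\Sigma)$ sends $[\alpha]$ to $v_B \mapsto v_{\alpha(B \cap G_1)}$. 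This sidesteps your obstacle entirely.

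Two further points you omit. First, one must pass to a pure finite-index subgroup $P \trianglelefteq \Mode(\Sigma)$ (Ivanov) and work with $N \cap P$, using $\Comm N \cong \Comm(N \cap P)$; purity is what makes supports of components well-defined. Second, the naive complex of basic-subgroup supports may have holes and may be disconnected; the paper repairs this by replacing holes with their fillings and then restricting to the connected component containing the small vertices, before Theorem~\ref{BigDaddy} can be invoked. Your proposal does not address whether $\vK(\Sigma,N)$ has holes or corks, and with only minimal-support vertices it is not clear that it would.
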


If $N$ is a normal subgroup of $\Mod(\Sigma)$ which is not normal in $\Mode(\Sigma)$ it can be shown using similar methods that $\Aut N \cong \Mod(\Sigma)$, see Brendle-Margalit \cite[Section 6]{BM17} and the author \cite[Section 5]{BraidMeta}.  We note that finding such a subgroup is itself an interesting problem.

Suppose $N$ contains an element of small support.  It follows that at least one of the elements of minimal support in $N$ will necessarily be of small support.  Furthermore, if $g=0$ or $n=0$ then \emph{all} elements of minimal support in $N$ are of small support.  This observation allows for the statement of the theorem to be consideribly simpler in these special cases.  In particular, if $n=0$ and $N$ contains an element $f$ of small support, that is, $g \ge 3g(R_f)+1$, then Theorem \ref{AutComm} applies, see \cite{BM17}.

We now discuss two applications of Theorem \ref{AutComm}.

\subsection*{The Johnson filtration is geometric}
We may apply Theorem \ref{AutComm} to a well known sequence of normal subgroups.  Write $\Gamma_0$ for the fundamental group of the surface $\Sigma$.  Consider now the lower central series of $\Gamma_0$, that is, $\Gamma_k := [\Gamma_0, \Gamma_{k-1}]$ for any $k > 0$.  There is a natural action of $\Mod(\Sigma)$ on the quotient group $\Gamma_0 / \Gamma_k$.  We may now define for each $k \ge 0$ the group
\[
\vJ_k(\Sigma) := \ker \big (\Mod(\Sigma) \to \Out(\Gamma_0 / \Gamma_k) \big ).
\]
It was shown that this sequence of groups is a filtration by Bass-Lubotzky \cite{BL94}.  Due to the work of Johnson, we name the sequence \emph{the Johnson filtration} \cite{DJ1} \cite{DJ2}.  The first term in the Johnson filtration is known as the \emph{Torelli group}.  This group has been studied by Brendle-Margalit-Putman \cite{BMP}, Kasahara \cite{YK01}, Mess \cite{GM92}, and Putman \cite{AP07} \cite{AP12}, to name only a few.  It was shown by Farb-Ivanov that the Torelli group is geometric \cite{FI05}.  Furthermore, the second term, \emph{the Johnson kernel}, is also geometric.  This is a result of Brendle-Margalit for closed surfaces \cite{Sep}, and Kida for punctured surfaces \cite{Kida}.  Farb then asked the question for what values of $k \ge 2$ is $\vJ_k(\Sigma)$ geometric \cite{FarbProblems}.  It was shown by Bridson-Pettet-Souto \cite{BPS} and Brendle-Margalit \cite{BM17} that if $g \ge 7$ then $\vJ_k(\Sigma_{g,0})$ is geometric for all $k \ge 0$.  We may apply Theorem \ref{AutComm} in order to answer this question for punctured surfaces.

\begin{cor}\label{JF}
Let $\Sigma = \Sigma_{g,n}$ such that $g,n \ge 5$.  Then the natural homomorphisms
\[
\Mode(\Sigma) \to \Aut \vJ_k(\Sigma) \to \Comm \vJ_k(\Sigma)
\]
are isomorphisms for any $k \ge 0$.
\end{cor}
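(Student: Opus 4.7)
The plan is to apply Theorem \ref{AutComm} with $N = \vJ_k(\Sigma)$, so that the corollary reduces to verifying that every element of minimal support in $\vJ_k(\Sigma)$ is of small support, under the assumption $g, n \ge 5$.

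First I would classify, up to homeomorphism, the subsurfaces that can arise as $R_f$ for a minimal support element $f \in \vJ_k(\Sigma)$. Via the natural embedding $\vJ_k(S) \hookrightarrow \vJ_k(\Sigma)$ for a subsurface $S \subset \Sigma$, this reduces to finding the smallest $S$ (in terms of genus and puncture count) for which $\vJ_k(S)$ is non-trivial. Using known structural results on the Johnson filtration---including iterated commutators of Torelli-type elements within a one-holed genus-two subsurface, which give non-trivial elements of $\vJ_k$ for every $k$, as exploited by Bridson-Pettet-Souto and Brendle-Margalit in the closed case---together with puncture-braid constructions in low-genus, few-puncture subsurfaces, one obtains a uniform bound: $g(R_f) \le 2$ and $n(R_f) \le 3$, independent of $k$.

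Next, for each such minimal support type $R_f$, I would produce witnesses $h_1, h_2 \in \vJ_k(\Sigma)$ supported in small subsurfaces disjoint from $R_f$ and from each other. Since $g, n \ge 5$ and $R_f$ is topologically small, the complement $\Sigma \setminus R_f$ has enough room to place two disjoint small subsurfaces $R_{h_1}, R_{h_2}$, each supporting a non-trivial element of $\vJ_k(\Sigma)$ via the same constructions as in Step~1 applied in the complement. A careful choice---for instance a witness of genus-one support with no extra punctures and a second witness of genus-zero support with a single puncture cluster---arranges $g(R_{h_1}) + g(R_{h_2}) \le 2$ and $n(R_{h_1}) + n(R_{h_2}) \le 1$. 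The two inequalities defining small support then reduce to $g \ge g(R_f) + 3$ and $n \ge n(R_f) + 2$, both of which hold since $g(R_f) \le 2$, $n(R_f) \le 3$, and $g, n \ge 5$.

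The main obstacle is the classification in Step~1. For every $k \ge 0$ one must simultaneously exhibit non-trivial elements of $\vJ_k$ with controlled small support (existence) and rule out the possibility of even smaller supports (non-existence). Existence is handled by iterated commutators of Torelli elements within a one-holed genus-two subsurface, whose non-triviality for all $k$ reflects the residual nilpotence of the Torelli group (or equivalently the non-vanishing of the Johnson homomorphisms at each level). Non-existence of smaller supports reduces to vanishing results for the Johnson filtration of low-complexity mapping class groups, for example the triviality of the Torelli group of the one-holed torus. Once this classification is complete, placing the witness subsurfaces and verifying the arithmetic is routine.
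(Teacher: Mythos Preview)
Your overall strategy matches the paper's: apply Theorem~\ref{AutComm} by classifying the possible minimal-support subsurfaces $R_f$ and then verifying the small-support inequalities with suitable witnesses $h_1,h_2$. The classification you state, $g(R_f)\le 2$ and $n(R_f)\le 3$, agrees with the paper's list $\Sigma_{2,0}^1$, $\Sigma_{1,1}^1$, $\Sigma_{0,3}^1$.

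The gap is in your witness step. You propose choosing $h_1,h_2$ with $g(R_{h_1})+g(R_{h_2})\le 2$ and $n(R_{h_1})+n(R_{h_2})\le 1$ simultaneously, citing ``a witness of genus-one support with no extra punctures'' and ``a second witness of genus-zero support with a single puncture cluster''. No such elements of $\vJ_k(\Sigma)$ exist: the Torelli group of $\Sigma_{1,0}^1$ is trivial, and likewise there are no nontrivial Torelli elements supported on $\Sigma_{0,1}^1$ or $\Sigma_{0,2}^1$. Since the $h_i$ must themselves lie in $\vJ_k$, their supports $R_{h_i}$ are at least as large as one of the minimal types $\Sigma_{2,0}^1$, $\Sigma_{1,1}^1$, $\Sigma_{0,3}^1$. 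A direct check of all pairs from this list shows that no pair achieves puncture-sum $\le 1$ while keeping genus-sum $\le 2$, so your reduction to the uniform inequalities $g\ge g(R_f)+3$ and $n\ge n(R_f)+2$ cannot be carried out.

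The remedy, which is what the paper does, is to abandon a universal witness pair and instead choose $h_1,h_2$ depending on the type of $R_f$: for $R_f\cong\Sigma_{2,0}^1$ take two copies of $\Sigma_{1,1}^1$; for $R_f\cong\Sigma_{1,1}^1$ or $\Sigma_{0,3}^1$ take $\Sigma_{2,0}^1$ together with $\Sigma_{1,1}^1$. Working through the three cases separately yields the constraints $g\ge 5$ and $n\ge 5$, which is exactly the hypothesis of the corollary.
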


See Section \ref{section_JF} for details on the bounds on $g$ and $n$ given in Corollary \ref{JF}.

\subsection*{Surface braid groups}
We may also apply Theorem \ref{AutComm} to the \emph{surface braid group} $\vB_{g,n}$, that is, the kernel of the homomorphism
\[
\Mod(\Sigma_{g,n}) \to \Mod(\Sigma_{g,0}),
\]
induced by the forgetful map $\Sigma_{g,n} \to \Sigma_{g,0}$.  Groups of this type have already been shown to be geometric by Irmak-Ivanov-McCarthy \cite{IIM} and An \cite{An}.  Now, if $f \in \vB_{g,n}$ is of minimal support then $R_f$ is homeomorphic to either $\Sigma_{1,1}^1$ or $\Sigma_{0,2}^1$.  Furthermore, if $n < 3$ then there are no elements of small support in $\vB_{g,n}$.  We therefore have the following corollary.

\begin{cor}
If $g \ge 4$ and $n \ge 5$ then the natural homomorphisms
\[
\Mode(\Sigma_{g,n}) \to \Aut \vB_{g,n} \to \Comm \vB_{g,n}
\]
are isomorphisms.
\end{cor}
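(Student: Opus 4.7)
The plan is to apply Theorem \ref{AutComm} to $N = \vB_{g,n}$. First, one checks that $\vB_{g,n}$ is normal in $\Mode(\Sigma_{g,n})$: the forgetful map extends to a homomorphism $\Mode(\Sigma_{g,n}) \to \Mode(\Sigma_{g,0})$, and the kernel is precisely $\vB_{g,n}$ since an orientation-reversing mapping class cannot induce the identity after filling in the punctures. So it remains to verify that every minimal-support element of $\vB_{g,n}$ is of small support, and the rest is largely arithmetic.

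Using the classification recorded in the discussion preceding the corollary, any minimal-support $f \in \vB_{g,n}$ satisfies $R_f \cong \Sigma_{1,1}^1$ or $R_f \cong \Sigma_{0,2}^1$. For the small-support witnesses, I would choose $h_1$ and $h_2$ themselves of minimal support with $R_{h_1} \cong R_{h_2} \cong \Sigma_{1,1}^1$; concretely, a point-push along a nonseparating loop through a puncture gives such an element, and two such are easily arranged in $\Sigma_{g,n}$ once $g \ge 1$ and $n \ge 1$. Then $g(R_{h_1}) + g(R_{h_2}) = 2$ and $n(R_{h_1}) + n(R_{h_2}) = 2$, so inequalities $(1)$ and $(2)$ simplify to $g \ge g(R_f) + 3$ and $n \ge n(R_f) + 3$ respectively. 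When $R_f \cong \Sigma_{1,1}^1$ these become $g \ge 4$ and $n \ge 4$; when $R_f \cong \Sigma_{0,2}^1$ they become $g \ge 3$ and $n \ge 5$. All four bounds are implied by $g \ge 4$ and $n \ge 5$, so every minimal-support element of $\vB_{g,n}$ is of small support and Theorem \ref{AutComm} delivers the corollary.

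The main substantive step assumed here is the classification of minimal supports in $\vB_{g,n}$: one must show that no element has $R_f$ strictly simpler than $\Sigma_{1,1}^1$ or $\Sigma_{0,2}^1$, which is where the particular structure of the surface braid group and the Birman exact sequence enter (in particular one needs to rule out candidates like $R_f \cong \Sigma_{0,1}^1$ coming from peripheral twists). Granted that input, the corollary reduces to the two-case arithmetic above, and the hypotheses $g \ge 4$, $n \ge 5$ are essentially tight for this choice of witnesses: the case $R_f \cong \Sigma_{1,1}^1$ forces $g \ge 4$ while the case $R_f \cong \Sigma_{0,2}^1$ forces $n \ge 5$, which also matches the author's remark that no elements of small support exist at all when $n < 3$.
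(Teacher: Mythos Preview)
Your proposal is correct and follows the same approach as the paper: invoke the classification of minimal supports in $\vB_{g,n}$ (namely $R_f \cong \Sigma_{1,1}^1$ or $\Sigma_{0,2}^1$) and then apply Theorem~\ref{AutComm}. The paper leaves the verification of the small-support inequalities implicit, whereas you spell out a specific choice of witnesses $h_1,h_2$ with $R_{h_i}\cong\Sigma_{1,1}^1$ and check the arithmetic; this is exactly the intended completion. One small remark: your parenthetical about ruling out $R_f \cong \Sigma_{0,1}^1$ is moot since $\Mod(\Sigma_{0,1}^1)$ is trivial; the genuinely relevant exclusion is $R_f \cong \Sigma_{1,0}^1$, which is ruled out because any element of $\vB_{g,n}$ supported away from all punctures must be trivial.
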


\subsection*{Conjectured definition of small support}
We note that the bounds given for Theorem \ref{AutComm} are not strict.  Indeed, consider $\Sigma= \Sigma_{g,n}$ where $g >0$, and $N = \Mod(\Sigma)$ with an element $f$of minimal support such that $R_f \cong \Sigma_{1,0}^1$ (for example, a Dehn twist about a nonseparating curve). In order to apply Theorem \ref{AutComm} we require that
\[
g \ge g(R_f) + 2 + 1 = 4.
\]
It has been shown however by Ivanov \cite{IV} and Korkmaz \cite{KOR} that $N=\Mod(\Sigma)$ is geometric for surfaces of genus one, two, and three.

We conjecture that the definition of small support may improved as follows: an element $f \in N$ is of \emph{small support} if there exists some $h \in N$ such that $R_f$ and $R_h$ do not intersect and have non isotopic boundary components.  A similar conjecture was made by Brendle-Margalit for the closed case \cite[Conjecture 1.5]{BM17}.  This is supported by the recent work of Clay-Mangahas-Margalit \cite{CMM}.

\subsection{Complexes of regions}\label{introduction_complexes}
A \emph{region} is a compact, connected subsurface of a surface $\Sigma$ such that each boundary component is an essential simple closed curve.  We define $\vR(\Sigma)$ to be the set of $\Mode(\Sigma)$-orbits of regions in $\Sigma$.  For any subset of orbits $A \subset \vR(\Sigma)$ we say that a region $R$ is \emph{represented} in $A$ if the $\Mode(\Sigma)$-orbit of $R$ belongs to $A$.  We now define a \emph{complex of regions} $\vC_A(\Sigma)$ to be a simplicial flag complex whose vertices correspond to all homotopy classes of regions represented in $A$.  If a vertex $v$ corresponds to the homotopy class of a region $R$, we usually say that $v$ corresponds to $R$.  Two vertices of $\vC_A(\Sigma)$ span an edge when they correspond to disjoint regions.

\subsection*{The curve complex}
If $A \subset \vR(\Sigma)$ is the set of orbits of annular regions then the complex $\vC_A(\Sigma)$ is called the \emph{curve complex}.  In this case it makes sense to think of homotopy classes of annuli as isotopy classes of essential simple closed curves.  This complex has been of fundamental importance in the study of mapping class groups and Teichm\"{u}ller space, see Hamenst{\" a}dt \cite{Ham14}, Ivanov \cite{IV}, Masur-Minsky \cite{MMHyper}, and Rafi-Schleimer \cite{RS09}, to name only a few.  Due to its importance, we reserve the notation $\vC(\Sigma)$ for the curve complex.

As discussed before, Ivanov proved that $\Mod(\Sigma)$ is geometric by studying the action of the extended mapping class group on the curve complex.  In particular he showed that $\Aut \vC(\Sigma) \cong \Mode(\Sigma)$.  By using the fact that powers of Dehn twists about distinct curves commute if and only if the curves are disjoint one is able to construct an isomorphism
\[
\Comm \Mod(\Sigma) \to \Aut \vC(\Sigma).
\]
This is a key step in Ivanov's application of automorphisms of $\vC(\Sigma)$ to automorphisms (and commensurations) of $\Mod(\Sigma)$.

\subsection*{The complex of domains}
If $A = \vR(\Sigma)$ then $\vC_A(\Sigma)$ is called the \emph{complex of domains}.  In some sense, the complex of domains is the extreme generalisation of the curve complex.  Indeed, McCarthy-Papodopoulos proved that if $\Sigma$ is closed, or has a single puncture, then $\Aut \vC_A(\Sigma) \cong \Mode(\Sigma)$ \cite{MCP}.  If $\Sigma$ has more than one puncture there exist automorphisms of $\vC_A(\Sigma)$ that are not induced by mapping classes.

Suppose $v_1,v_2 \in \vC_A(\Sigma)$ are the vertices described in Figure \ref{corkhole}(i).
\begin{figure}[t]
\centering
\labellist \hair 1pt
	\pinlabel {(i)} at 300 -42
	\pinlabel {(ii)} at 1600 -42
    \endlabellist
\includegraphics[scale=0.15]{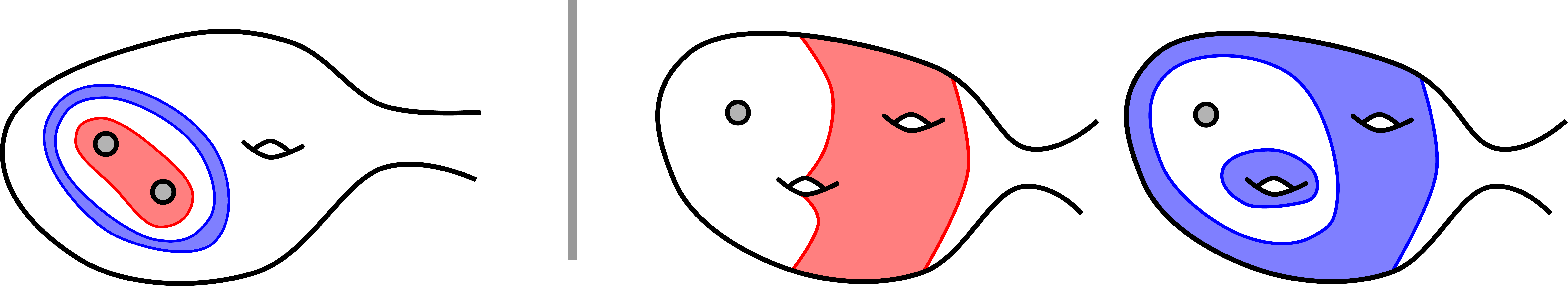}
\caption{(i) Any region that has essential intersection with the annulus must also intersect the disc with two punctures.  If $v_1$ and $v_2$ are vertices of a complex of regions that correspond to these two regions then $v_1$ and $v_2$ span an edge, and any other vertex spans an edge with $v_1$ if and only if it spans an edge with $v_2$. (ii) Suppose no subsurface of a once punctured nonseparating annulus is represented in $A$. If $v_1, v_2$ are vertices of $\vC_A(\Sigma)$ corresponding to the regions shown then a vertex spans an edge with $v_1$ if and only if spans an edge with $v_2$.}
\label{corkhole}
\end{figure}
We may define an order two automorphism $\phi \in \Aut \vC_A(\Sigma)$ such that $\phi(v_1) = v_2$, $\phi(v_2) = v_1$, and $\phi(v) = v$ for all other vertices $v \in \vC_A(\Sigma)$.  Any automorphism that swaps two distinct vertices and fixes all others in this way is called an \emph{exchange automorphism}.  In Section \ref{InjectEx} we discuss exchange automorphisms further.  In particular, we visit the fact that exchange automorphisms occur in complexes of regions if and only if there are vertices of the following type.

\subsection*{Corks and holes}
We say a vertex $v$ of $\vC_A(\Sigma)$ is a \emph{cork} if it corresponds to an annulus with complementary region $Q$ represented in $A$ with no proper, non-peripheral subsurface of $Q$ represented in $A$.  If $u$ corresponds to $Q$ we call the vertices $u$ and $v$ a \emph{cork pair}.  See Figure \ref{corkhole}(i) for an example of a cork pair.

A vertex $v$ of $\vC_A(\Sigma)$ is a \emph{hole} if it corresponds to a region that has a complementary region $Q$ such that no subsurface of $Q$ is represented in $A$.  See Figure \ref{corkhole}(ii) for an example vertices of a complex of regions that are holes.

\subsection*{The metaconjecture of Ivanov}
There are many other complexes of regions that have been studied; such as the complex of strongly separating curves by Bowditch \cite{Strongly}, the complex of separating curves by Brendle-Margalit \cite{Sep}, the complex of nonseparating curves by Irmak \cite{Nonsep}, the arc complex by Irmak-McCarthy \cite{Arc}, the arc and curve complex by Korkmaz-Papadopolous \cite{Arccurve}, and the truncated complex of domains by McCarthy-Papadopolous \cite{MCP}, among others.  Each of these complexes has been shown to have the extended mapping class group as its group of automorphisms for all but finitely many low complexity surfaces.  Furthermore, there are numerous other complexes associated to surfaces which are not complexes of regions. It was shown that the extended mapping class group is the group of automorphisms of; the Torelli complex by Farb-Ivanov \cite{FI05}, the flip graph by Korkmaz-Papadopolous \cite{Triangulation}, and the pants complex by Margalit \cite{Pants}.  In each case there are restrictions on the surfaces for which the result holds.  These results led Ivanov to make a metaconjecture \cite{FarbProblems}.
\begin{meta}[Ivanov]
Every object naturally associated to a surface $\Sigma$ and having sufficiently rich structure has $\Mode(\Sigma)$ as its group of automorphisms.  Moreover, this can be proved by a reduction to the theorem about automorphisms of $\vC(\Sigma)$.
\end{meta}

This paper partially resolves the metaconjecture for complexes of regions related to a surface $\Sigma=\Sigma_{g,n}$.  Furthermore, Theorem \ref{AutComm} resolves the metaconjecture where we consider normal subgroups as \emph{objects naturally associated} to $\Sigma$ and the conditions of the theorem provide \emph{sufficiently rich structure}.  This extends work of Brendle-Margalit, who deal with the case where $n=0$ \cite{BM17}.  The case where $g=0$ is the focus of a previous paper by the author \cite{BraidMeta}.  We may assume throughout this paper therefore that $g,n>0$.

\subsection{Main theorem on complexes of regions}\label{introduction_metaconjecture}

For any region $R \subset \Sigma$ an \emph{enveloping region} $\wh R$ of $R$ is a single-boundary region such that $R \subset \wh R$ and $R$ is not a subsurface of any proper single-boundary region contained in $\wh R$.  Let $v$ be a vertex of a complex of regions corresponding to the region $R_v$.  We write $\wh v \subset \Sigma$ for the enveloping region of the region $R_v$, that is, $\wh v := \wh R_v$. 

\subsection*{Minimal vertices}
Let $\vC_A(\Sigma)$ be a complex of regions.  We say that a vertex $v \in \vC_A(\Sigma)$ is \emph{minimal} if for any vertex $u$ such that $\wh u \subset \wh v$, we have that $\wh u$ and $\wh v$ are homeomorphic.  If a vertex $v$ is minimal, then every vertex in the $\Mode(\Sigma)$-orbit of $v$ is also minimal.

The following definition is equivalent to that of elements of small support given in Section \ref{introduction_normal}

\subsection*{Small vertices}
Let $\Sigma = \Sigma_{g,n}$ and let $\vC_A(\Sigma)$ be a complex of regions.  We say that a vertex $v \in \vC_A(\Sigma)$ is \emph{small} if there exist two vertices $u_1, u_2$ such that
\begin{align}
g &\ge g(\wh v) + \max \{ g( \wh u_1) + g( \wh u_2) , 2 \} + 1, \mbox{ and} \\ 
n &\ge n( \wh v) + \max \{ n( \wh u_1) + n( \wh u_2), 1\} + 1.
\end{align}
As before, if $g=0$ or $n=0$ then we ignore $(3)$ and $(4)$ respectively.

\begin{main}\label{BigDaddy}
Let $\vC_A(\Sigma)$ be a complex of regions.  Suppose that every minimal vertex of $\vC_A(\Sigma)$ is small.  Then the natural homomorphism
\[
\eta_A : \Mode(\Sigma) \to \Aut \vC_A(\Sigma)
\]
is an isomorphism if and only if $\vC_A(\Sigma)$ has no holes and no corks.
\end{main}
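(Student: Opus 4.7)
The ``only if'' direction follows quickly from the discussion of exchange automorphisms: if $\vC_A(\Sigma)$ contains a hole or a cork pair, as in Figure \ref{corkhole}, there is an automorphism $\phi$ that swaps two distinct vertices while fixing every other vertex. No element of $\Mode(\Sigma)$ can induce such a $\phi$, since any mapping class fixing every other region represented in $A$ would fix every homotopy class of curve outside the two regions involved and must therefore be the identity, contradicting that it swaps two non-isotopic regions.

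For the ``if'' direction, injectivity of $\eta_A$ is standard: the kernel fixes every region represented in $A$, hence fixes the boundary curves of all their enveloping regions, and by the faithful action of $\Mode(\Sigma)$ on $\vC(\Sigma)$ used by Ivanov this kernel is trivial. The real content is surjectivity, which I plan to obtain by reducing to Ivanov's theorem $\Aut \vC(\Sigma) \cong \Mode(\Sigma)$. Given $\phi \in \Aut \vC_A(\Sigma)$, the first step is a combinatorial characterisation of the set $M \subset \vC_A(\Sigma)$ of minimal vertices, so that $\phi(M) = M$. For each minimal $v$ the enveloping region $\wh v$ has a single boundary curve $c_v := \partial \wh v$, giving a map $M \to \vC(\Sigma)$, $v \mapsto c_v$. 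The next step is to show that the edge relation on $M$ inside $\vC_A(\Sigma)$ encodes disjointness of these curves, so that $\phi|_M$ descends to an automorphism of a full subcomplex of $\vC(\Sigma)$; Ivanov's theorem then produces $f \in \Mode(\Sigma)$ inducing that action on curves. Finally one verifies $\eta_A(f) = \phi$ on every vertex of $\vC_A(\Sigma)$, not just on $M$, by observing that any vertex $w$ is determined by its adjacency pattern to minimal vertices inside and outside $\wh w$, a pattern that is preserved precisely because there are no holes and no corks.

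The smallness hypothesis enters essentially in the middle two steps. Inequalities (3) and (4) provide exactly the topological room needed so that, given any pair of minimal vertices, one can find two further minimal vertices disjoint from them with prescribed genus and puncture complement; this combinatorial freedom is what lets one decode curve disjointness from $\vC_A(\Sigma)$ and what powers the extension step for non-minimal vertices. I expect the main obstacle to be the combinatorial characterisation of $M$: unlike in the closed case of Brendle-Margalit, a surface with both positive genus and punctures admits minimal vertices of several distinct homeomorphism types, so the characterisation --- together with the subsequent bookkeeping of $g(\wh v)$ and $n(\wh v)$ under $\phi$ --- requires case analysis rather than a single uniform argument. Once minimal vertices are correctly identified and sorted by homeomorphism type, the remaining steps follow the Brendle-Margalit template, adapted to track punctures alongside genus.
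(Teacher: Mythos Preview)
Your overall architecture---characterise a distinguished set of vertices, pass to a complex of curves, invoke a rigidity theorem there, then extend back---matches the paper's strategy. But there is a genuine gap at the crucial step. The boundary $c_v = \partial\wh v$ of an enveloping region is always a \emph{separating} curve, so the image of your map $M \to \vC(\Sigma)$ lands in a proper subcomplex of the curve complex, in fact in a subcomplex of the separating curve complex $\vC_\vS(\Sigma)$. Ivanov's theorem concerns $\Aut\vC(\Sigma)$, not automorphisms of such subcomplexes, so it cannot be invoked directly. Producing the mapping class $f$ from an automorphism of this subcomplex is precisely the content of the paper's Theorem~\ref{etas}, and that theorem requires real work: an inductive removal of vertex types from $\vC_\vS(\Sigma)$ down to $\vC_{\vS(A)}(\Sigma)$, using two flavours of sharing pairs (genus and puncture) to recover missing vertex types at each stage. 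This is where the smallness inequalities are actually consumed, and the argument is substantially more delicate when $g,n>0$ than in the closed case because minimal vertex types are indexed by pairs $(k,l)$ rather than a single integer.

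There is a second, related gap. Your plan jumps straight from $\vC_A(\Sigma)$ to separating curves via minimal vertices, but the paper does not attempt to characterise minimal vertices of $\vC_A(\Sigma)$ combinatorially at all. Instead it identifies certain maximal \emph{joins} in $\vC_A(\Sigma)$ with vertices of an intermediate complex $\vC_{\partial A}(\Sigma)$ of \emph{dividing sets} (multicurves cutting $\Sigma$ into exactly two pieces), proves rigidity for $\vC_{\partial A}(\Sigma)$ by locating the separating-curve vertices inside it as a characteristic subset, and only then descends to $\vC_{\vS(A)}(\Sigma)$. The join analysis (Lemmas~\ref{strong}--\ref{allone}) is exactly where the no-holes and no-corks hypotheses do their work, and your final extension step (``$w$ is determined by its adjacency pattern to minimal vertices'') underestimates what is needed: the paper's argument here requires a careful partial order on link vertices and a case analysis showing that $\le$-minimal vertices are either $1$-sided annular or $1$-sided non-annular link-minimal, with the absence of holes and corks invoked repeatedly.
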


\subsection*{Outline of the paper}
The majority of the paper is dedicated to proving Theorem \ref{BigDaddy}.  In Section \ref{InjectEx} we first discuss injectivity of the natural homomorphism $\eta_A$ and then exchange automorphisms of complexes of regions.  Very roughly, the proof of Theorem \ref{BigDaddy} proceeds by defining two complexes $\vC_{\vS(A)}(\Sigma)$ and $\vC_{\partial A}(\Sigma)$ which carry similar information to $\vC_A(\Sigma)$.  In each case we prove that the usual natural homomorphism from $\Mode(\Sigma)$ to the group of automorphisms is an isomorphism.
\[
\begin{tikzcd}
& \Mode(\Sigma) \arrow{ddl}[swap]{\eta_A} \arrow{dd}[swap]{\eta_{\partial A}} \arrow{ddr}[swap]{\eta_{\vS(A)}} \arrow{ddrrr}{\cong} & & & \\ \\ 
\Aut \vC_A(\Sigma) \arrow{r} & \Aut \vC_{\partial A}(\Sigma) \arrow{r} & \Aut \vC_{\vS(A)}(\Sigma) \arrow{r} & \cdots \arrow{r} & \Aut \vC(\Sigma)
\end{tikzcd}
\]

In Section \ref{section_subcomplexes} we define a subcomplex $\vC_{\vS(A)}(\Sigma)$ of the curve complex $\vC(\Sigma)$ related to a complex of regions.  We then prove in Theorem \ref{etas} that the homomorphism $\eta_{\vS(A)}$ is an isomorphism.  In Section \ref{section_DS} we define a second complex $\vC_{\partial A}(\Sigma)$.  In this case, the vertices correspond to so-called \emph{dividing sets}, multicurves in $\Sigma$ that separate the surface into precisely two components.  In Theorem \ref{etad} we show that the natural homomorphism $\eta_{\partial A}$ from $\Mode(\Sigma)$ to the automorphism group of this complex is also an isomorphism.  In Section \ref{theproof} we use Theorem \ref{etad} to prove Theorem \ref{BigDaddy}, that is, every homomorphism in the diagram above is an isomorphism.  This outline is analogous to that of Brendle -Margalit \cite[Theorem 1.7]{BM17}.

Finally, in Section \ref{section_normal}, we prove Theorem \ref{AutComm} as an application of Theorem \ref{BigDaddy}.  Similar to Ivanov's application of the curve complex result, the proof relies on constructing a homomorphism
\[
\Comm N \to \Aut \vC_N(\Sigma),
\]
where $\vC_N(\Sigma)$ is a complex of regions associated to a normal subgroup $N$ of $\Mode(\Sigma)$.  This argument uses the mathematical machinery developed by Brendle-Margalit for the closed case.  As such, some details are omitted and appropriate references are given to their paper \cite[Section 6]{BM17}.

\subsection*{Acknowledgments}
The author would like to thank his supervisor, Tara Brendle, for her helpful guidance and support. He is grateful to Dan Margalit for several helpful discussions and suggestions that greatly improved the paper. He would also like to thank Javier Aramayona, Vaibhav Gadre, Tyrone Ghaswala, Chris Leininger, Johanna Mangahas, and Shane Scott for their support and helpful discussions about the paper.

\section{Preliminary results}\label{InjectEx}

In this section we prove that the homomorphism $\eta_A$ from Theorem \ref{BigDaddy} is injective.  This result is in fact more general and will be used many times throughout the paper.  Following the work of McCarthy-Papadopoulos \cite[Section 4]{MCP} and Brendle-Margalit \cite[Section 2]{BM17} we then look at the precise conditions for a complex of regions $\vC_A(\Sigma)$ to admit exchange automorphisms as defined in Section \ref{introduction_complexes}.

\subsection{Injectivity}
We will first prove that $\eta_A$ is an injective group homomorphism.

\begin{lem}\label{Injectivity}
Let $\Sigma = \Sigma_{g,n}$ be a surface such that $g,n>0$.  If $\vC_A(\Sigma)$ is connected then the natural homomorphism
\[
\eta_A : \Mode(\Sigma) \rightarrow \Aut \vC_A(\Sigma)
\]
is injective.
\end{lem}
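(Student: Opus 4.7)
The plan is to reduce the injectivity of $\eta_A$ to the known injectivity of the homomorphism $\Mode(\Sigma) \to \Aut \vC(\Sigma)$ for punctured surfaces of positive genus (Ivanov \cite{IV}, Korkmaz \cite{KOR}). Let $f \in \ker \eta_A$; the goal is to show $f = 1$.

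Since $\vC_A(\Sigma)$ is connected it is nonempty, so I would fix a vertex $v_0 \in \vC_A(\Sigma)$ corresponding to a region $R_0$. Because $f$ fixes every vertex of $\vC_A(\Sigma)$, it fixes each element of the $\Mode(\Sigma)$-orbit of $[R_0]$, so $\ker \eta_A$ is contained in the normal core of $\Stab([R_0])$ in $\Mode(\Sigma)$. In particular, for every $R$ in the orbit of $R_0$ the element $f$ preserves the isotopy class of the boundary multicurve $\partial R$. Connectedness of $\vC_A(\Sigma)$ supplies pairs of adjacent (and hence disjoint) vertices that can be realized simultaneously in $\Sigma$, and cross-referencing the preserved boundary multicurves across such pairs, together with the change of coordinates principle, yields that $f$ fixes every isotopy class of essential simple closed curve. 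The Ivanov--Korkmaz theorem then forces $f = 1$.

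The principal obstacle is the cross-referencing step: knowing only that each boundary multicurve $\partial R$ is setwise preserved is not enough on its own to conclude that each component is individually fixed, so one must genuinely exploit the disjointness of adjacent regions in $\vC_A(\Sigma)$ to rule out nontrivial permutations of boundary components. The edge-case surface $\Sigma_{1,1}$, where the Ivanov--Korkmaz result fails because the hyperelliptic involution acts trivially on $\vC(\Sigma_{1,1})$, needs separate attention; however, in that regime the connectedness hypothesis on $\vC_A(\Sigma)$ is already strong enough to exclude the pathological configurations, so the reduction still goes through.
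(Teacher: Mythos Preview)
Your strategy---show that $f \in \ker\eta_A$ fixes every essential simple closed curve and then invoke the injectivity of $\Mode(\Sigma)\to\Aut\vC(\Sigma)$---is reasonable in outline, but the step you yourself flag as the ``principal obstacle'' is not actually carried out. You correctly observe that knowing $f$ preserves each boundary multicurve $\partial R$ only setwise does not pin down the individual components, and you assert that disjointness of adjacent regions together with change of coordinates resolves this, but you give no mechanism. Having two disjoint regions $R_1,R_2$ with $f([R_i])=[R_i]$ still allows $f$ to permute the components of $\partial R_1$ among themselves; nothing in your sketch explains how the presence of $R_2$ breaks that symmetry. As written this is a genuine gap, not a routine detail.

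The paper's proof avoids this obstacle entirely rather than resolving it. Instead of analyzing boundary multicurves, it fixes an arbitrary nonseparating curve $\bc$ and observes that the complement $\Sigma\setminus\bc$ is \emph{filled} by regions represented in $A$ (this is where connectedness of $\vC_A(\Sigma)$, together with $g,n>0$, is used: it guarantees regions in $A$ small enough to sit inside $\Sigma\setminus\bc$ and fill it). Since $f$ fixes each such region up to isotopy and $f(\bc)$ is disjoint from all of them, $f(\bc)$ is forced to be isotopic to $\bc$. Thus $f$ fixes every nonseparating curve directly, with no need to untangle permutations of boundary components. From there the paper argues that $f$ fixes a pants decomposition, hence lies in $\PMod(\Sigma)$ and commutes with every Dehn twist, so $f$ is central in $\PMod(\Sigma)$, which is trivial. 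This last step is the same content as the injectivity statement you cite, just phrased algebraically rather than via $\Aut\vC(\Sigma)$.

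Your remark about $\Sigma_{1,1}$ reaches the right conclusion: in that surface no two distinct regions are disjoint, so $\vC_A(\Sigma_{1,1})$ is never connected and the hypothesis is vacuous.
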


\begin{proof}
Let $\bc$ be a nonseparating curve in $\Sigma$ and let $R := \Sigma \sm \bc$. The subsurface $R$ is filled by regions that are represented in $A$. Equivalently, there exist regions represented in $A$ whose boundary components are curves that fill $R$.  It follows then that if $f \in \Mode(\Sigma)$ is in the kernel of $\eta_A$ it must also fix $\bc$. Since our choice of $\bc$ was arbitrary we can find a pants decomposition $P$ of $\Sigma$ such that $f$ fixes every curve in $P$. We conclude that $f$ is a product of Dehn twists and is therefore orientation preserving.  In particular, $f \in \PMod(\Sigma)$.  If $T_\bc$ is the Dehn twist defined by the curve $\bc$ then we have that
\[
T_\bc = T_{f(\bc)} = f T_\bc f^{-1}.
\]
Since our choice of $\bc$ was arbitrary and $\PMod(\Sigma)$ is generated by Dehn twists we have that $f$ is in the centre of $\PMod(\Sigma)$.  The centre of $\PMod(\Sigma)$ is trivial and so $\eta_A$ is injective.
\end{proof}

\subsection{Exchange automorphisms}\label{Exchange}
Recall the definitions of exchange automorphisms, holes, and corks from Section \ref{introduction_complexes}
We state two results of Brendle-Margalit relating these notions which together imply the `only if' condition in the statement of Theorem \ref{BigDaddy}.  Note that Brendle-Margalit state Theorems \ref{Ex1} and \ref{Ex2} for closed surfaces only \cite[Theorem 2.1, Theorem 2.2]{BM17}.  The proofs can be adapted for surfaces with punctures using the notion of a small vertex given in Section \ref{introduction_metaconjecture}.

\begin{thm}[Brendle-Margalit]\label{Ex1}
Let $\Sigma$ be a punctured surface or a closed surface of genus $g \ge 3$. Let $\vC_A(\Sigma)$ be a complex of regions with no isolated vertices or edges.  Then $\vC_A(\Sigma)$ admits exchange automorphisms if and only if it has a hole or a cork.  Moreover, two vertices can be exchanged by an exchange automorphism if and only if they are holes with equal fillings or they form a cork pair.
\end{thm}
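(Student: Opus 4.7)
The plan is to recast the existence of an exchange automorphism in purely combinatorial terms. Because $\vC_A(\Sigma)$ is a flag complex, the transposition $v_1 \leftrightarrow v_2$ fixing every other vertex extends to a simplicial automorphism exactly when, for each vertex $w \ne v_1, v_2$, the edges $w v_1$ and $w v_2$ are simultaneously present or absent. So the whole theorem reduces to classifying distinct pairs $(v_1, v_2)$ with this matching-links property.

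The ``if'' direction is a direct verification from the cork and hole definitions. For a cork pair $(v_1, v_2)$ with $v_1$ an annulus $A_0$ and $v_2$ corresponding to the complementary region $Q$, any vertex $w \ne v_1, v_2$ disjoint from $v_1$ is a region inside $Q$, which by the cork condition is either $Q$ itself (excluded) or a peripheral annulus in $Q$ and hence isotopic to $A_0$ (excluded); symmetrically, the only region disjoint from $Q$ is $A_0$ itself, so outside $\{v_1, v_2\}$ both links are empty and the transposition preserves adjacency vacuously. For two holes $v_1, v_2$ with a common filling, the shared complementary region carries no $A$-regions by the hole condition, so the $A$-regions disjoint from $v_1$ and those disjoint from $v_2$ coincide, and the transposition again preserves adjacency.

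For the ``only if'' direction, suppose $v_1 \ne v_2$ have matching links and write $R_1, R_2$ for the corresponding regions. I would argue the contrapositive: assuming $(v_1, v_2)$ is neither a cork pair nor a pair of holes with equal fillings, exhibit a vertex $w$ adjacent to exactly one of $v_1, v_2$. Split cases on whether $R_1, R_2$ are disjoint up to isotopy. In the disjoint case, examine the pieces into which $R_1 \cup R_2$ cuts $\Sigma$: unless one of the $R_i$ is an annulus whose sole complementary region is the other, matching the cork definition, the complexity of $\Sigma$ (punctured or closed of genus at least three) is enough to realise an $A$-region that lies in one complement while essentially meeting the other, furnishing the distinguishing $w$. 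In the intersecting case, any $A$-region disjoint from $R_1 \cup R_2$ is disjoint from $v_1$ and $v_2$ simultaneously and so cannot distinguish them; any distinguishing $w$ must be disjoint from exactly one $R_i$, and suppressing all such $w$ forces the single-boundary envelopes $\wh{R_1}$ and $\wh{R_2}$ to coincide, placing $(v_1, v_2)$ in the hole-with-equal-fillings class.

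The main obstacle is this ``only if'' case analysis, specifically producing the distinguishing vertex $w$ across every non-cork, non-hole configuration. Brendle--Margalit~\cite[Section 2]{BM17} carry out the closed-surface version by a topological enumeration of how $R_1, R_2$ can sit in $\Sigma$. The punctured setting is more delicate because extra small regions, namely discs containing punctures and peripheral annuli around punctures, shrink the inventory of available witnesses. The plan is to follow the closed-case enumeration step by step and, wherever a genus count there places the witness region, substitute an application of the \emph{small vertex} notion of Section~\ref{introduction_metaconjecture}, which encodes precisely the topological room needed; a separate verification is required to rule out puncture-related small regions spawning spurious exchange pairs outside the cork and hole classes.
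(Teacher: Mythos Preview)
The paper does not actually prove this statement: it attributes the result to Brendle--Margalit, states it, and remarks that the closed-surface proof in \cite[Theorem~2.1]{BM17} adapts to the punctured case via the notion of small vertex. Your overall plan---reduce to the equal-links/equal-stars dichotomy, then follow the Brendle--Margalit case analysis with small-vertex bounds replacing the genus counts---is exactly the adaptation the paper gestures at, so in that sense your approach and the paper's are the same.

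That said, your verification of the ``if'' direction for cork pairs is wrong. You claim that any $w \ne v_1, v_2$ disjoint from the annulus $A_0$ must lie inside $Q$, and conclude that both links are empty outside $\{v_1,v_2\}$. But the annulus has a second complementary region (the large side of $\Sigma$), and there are plenty of $A$-regions there; these are adjacent to both $v_1$ and $v_2$, and indeed this is why the stars coincide. If the links really were empty, $\{v_1,v_2\}$ would be an isolated edge, contradicting the hypothesis. The correct statement, which the paper records, is that cork pairs have equal \emph{stars} (they are adjacent and $\Lk(v_1)\setminus\{v_2\} = \Lk(v_2)\setminus\{v_1\}$), while holes with equal fillings have equal \emph{links} (they are non-adjacent with $\Lk(v_1)=\Lk(v_2)$). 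Your hole argument is fine; fix the cork case accordingly.

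For the ``only if'' direction your sketch is reasonable but thin. The disjoint case does not cleanly yield corks in the way you describe: equal stars for a disjoint pair forces one region to be an annulus and the other to be its complementary region with no proper non-peripheral $A$-subregion, and you should state this carefully rather than leaving it as ``the complexity of $\Sigma$ is enough''. In the intersecting case, concluding that equal links forces equal fillings requires more than matching enveloping regions $\wh R_1 = \wh R_2$; you need that every complementary region of $R_i$ not contained in the common filling actually carries an $A$-region, which is where the no-holes/no-corks interplay and the topological enumeration of \cite[Section~2]{BM17} enter. Since the paper defers all of this to that reference, a complete write-up on your part would need to reproduce that enumeration.
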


As an example we consider the cork pair and the holes depicted in Figure \ref{corkhole}.  Recall that the \emph{link} of a vertex $v$, denoted $\Lk(v)$, is the set of all vertices that span an edge with $v$ in the complex. The \emph{star} of a vertex is the union of the vertex and its link.  The vertices corresponding to the regions in Figure \ref{corkhole}(i) have equal stars.  Similarly the vertices described in Figure \ref{corkhole}(ii) have equal links and the vertices do not span an edge with each other.  If two vertices have equal links or equal stars then we can define an automorphism that exchanges the vertices.  The proof of \cite[Theorem 2.1]{BM17} tells us that if two vertices have equal links then they are holes, and if they have equal stars then they are cork pairs.

When the automorphism group of a complex of regions does contain exchange automorphisms, Brendle-Margalit give us an explicit description of the automorphism group of the complex.

\begin{thm}[Brendle-Margalit]\label{Ex2}
Let $\vC_A(\Sigma)$ be a complex of regions that is connected.  If every minimal vertex of $\vC_A(\Sigma)$ is small then
\[
\Aut \vC_A (\Sigma) \cong \Ex \vC_A (\Sigma) \rtimes \Mode(\Sigma).
\]
\end{thm}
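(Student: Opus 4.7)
The plan is to exhibit a short exact sequence
\[
1 \longrightarrow \Ex \vC_A(\Sigma) \longrightarrow \Aut \vC_A(\Sigma) \longrightarrow \Mode(\Sigma) \longrightarrow 1
\]
which is split by the natural homomorphism $\eta_A$, and then appeal to the standard recognition principle for semidirect products.

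First I would verify that $\Ex := \Ex \vC_A(\Sigma)$ is a normal subgroup of $\Aut \vC_A(\Sigma)$. By Theorem \ref{Ex1} an exchange automorphism swaps exactly one pair of vertices, which is either a cork pair or a pair of holes with equal filling; both of these properties are intrinsic to the combinatorics of $\vC_A(\Sigma)$ and are preserved by every $\phi \in \Aut \vC_A(\Sigma)$. Hence if $\sigma \in \Ex$ swaps $v$ and $v'$, then $\phi \sigma \phi^{-1}$ swaps $\phi(v)$ and $\phi(v')$ (again a cork pair or a pair of equally-filled holes) and fixes every other vertex, so it lies in $\Ex$.

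Next I would construct a quotient complex $\vC$ obtained from $\vC_A(\Sigma)$ by collapsing each $\Ex$-orbit of vertices to a single vertex, with edges declared whenever any pair of representatives spans an edge. Every $\phi \in \Aut \vC_A(\Sigma)$ descends to an automorphism of $\vC$, and the kernel of the descent homomorphism is exactly $\Ex$ (essentially by construction, together with Theorem \ref{Ex1}, which tells us that two vertices have equal stars iff they form a cork pair and equal links iff they are holes with equal filling). The heart of the argument is then the claim that $\vC$ has no holes and no corks and that its minimal vertices remain small: the minimal vertices of $\vC$ are inherited from minimal vertices of $\vC_A(\Sigma)$ with the same enveloping regions, so the inequalities $(3)$ and $(4)$ are preserved verbatim. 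Applying Theorem \ref{BigDaddy} to $\vC$ identifies $\Aut \vC$ with $\Mode(\Sigma)$ via the natural homomorphism, which gives the asserted short exact sequence.

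Finally, Lemma \ref{Injectivity} shows $\eta_A$ is injective, and the composition $\Mode(\Sigma) \xrightarrow{\eta_A} \Aut \vC_A(\Sigma) \to \Aut \vC \cong \Mode(\Sigma)$ is the identity by construction, so $\eta_A$ splits the sequence and we obtain $\Aut \vC_A(\Sigma) \cong \Ex \vC_A(\Sigma) \rtimes \Mode(\Sigma)$.

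The main obstacle is the analysis of the quotient complex $\vC$. Two steps require care: one must check that $\vC$ is itself (up to a legitimate reinterpretation) a complex of regions in the sense of Section \ref{introduction_complexes}, with connectedness preserved and with the smallness hypothesis inherited, so that Theorem \ref{BigDaddy} truly applies to it; and one must show surjectivity of the descent map $\Aut \vC_A(\Sigma) \to \Aut \vC$, i.e.\ that every automorphism of $\vC$ lifts. The latter is the pivotal point: once $\Aut \vC \cong \Mode(\Sigma)$, any automorphism of $\vC$ comes from some $f \in \Mode(\Sigma)$, and $\eta_A(f) \in \Aut \vC_A(\Sigma)$ provides the required lift. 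Ensuring no subtle obstruction appears when collapsing cork pairs (which, unlike holes, span an edge with their partner) is what requires the most delicate verification.
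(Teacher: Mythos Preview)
The paper does not give its own proof of this statement: it is quoted as a result of Brendle--Margalit \cite[Theorem~2.2]{BM17}, with the remark that their proof adapts to punctured surfaces using the present notion of small vertex. So there is no in-paper argument to compare against directly.

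Your outline is broadly the right shape, but two points deserve comment. First, you invoke Theorem~\ref{BigDaddy} to analyse the quotient complex, yet in this paper Theorems~\ref{Ex1} and~\ref{Ex2} are what supply the ``only if'' direction of Theorem~\ref{BigDaddy}. This is not a genuine circularity---you only need the ``if'' direction, which is proved independently in Section~\ref{theproof}---but you should flag that explicitly. Second, and more substantively, Brendle--Margalit do not pass to an abstract quotient of $\vC_A(\Sigma)$ by $\Ex$; instead they produce a new subset $A' \subset \vR(\Sigma)$ by replacing each hole with its filling and discarding the annular member of each cork pair, and then work with the honest complex of regions $\vC_{A'}(\Sigma)$. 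This sidesteps exactly the obstacle you identify at the end: one never has to argue that an abstract quotient ``is'' a complex of regions, because $\vC_{A'}(\Sigma)$ is one by construction, and the verification that it has no holes, no corks, and inherits the smallness of minimal vertices is a concrete check on $A'$ (this is \cite[Lemma~2.4]{BM17}, also used later in the present paper). The map $\Aut \vC_A(\Sigma) \to \Aut \vC_{A'}(\Sigma)$ is then built by hand using Theorem~\ref{Ex1}, with kernel $\Ex \vC_A(\Sigma)$, and $\eta_A$ provides the splitting as you say. Your quotient picture is morally the same object, but making it rigorous amounts to reproving that $\vC_{A'}(\Sigma)$ is the right model, so you may as well work with $A'$ from the start.
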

\noindent Here, $\Ex  \vC_A (\Sigma)$ is the normal subgroup of $\Aut \vC_A(\Sigma)$ generated by all exchange automorphisms.

\section{Subcomplexes of the separating curve complex}\label{section_subcomplexes}

Given a surface $\Sigma$, let $\vS$ be the set of $\Mode(\Sigma)$-orbits of separating curves in $\Sigma$. We denote by $\vC_\vS(\Sigma)$ the \emph{separating curve complex}, the subcomplex of $\vC(\Sigma)$ spanned by vertices corresponding to separating curves.  In this section we study the automorphisms of particular subcomplexes of the separating curve complex.  

For any separating curve $\bc$ in $\Sigma$ there are two \emph{associated regions} defined by cutting $\Sigma$ along $\bc$.  For any subset $A \subset \vR(\Sigma)$ we say that a $\bc$ separates regions represented in $A$ if both of its associated regions contain regions represented in $A$.  We define $\vC_{\vS(A)}(\Sigma)$ to be the subcomplex of $\vC_\vS(\Sigma)$ spanned by vertices corresponding to curves that separate regions represented in $A$.  The main goal of this section is to prove the following theorem.

\begin{thm}\label{etas}
Let $A \subset \vR(\Sigma)$ and let $\vC_{\vS(A)}(\Sigma)$ be the subcomplex of $\vC_\vS(\Sigma)$ defined above.  If every minimal vertex of $\vC_{\vS(A)}(\Sigma)$ is small then the natural homomorphism
\[
\eta_{\vS(A)} : \Mode(\Sigma) \to \Aut \vC_{\vS(A)}(\Sigma)
\]
is an isomorphism.
\end{thm}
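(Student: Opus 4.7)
The plan is to reduce Theorem \ref{etas} to the known rigidity of the full separating curve complex, namely $\Aut \vC_\vS(\Sigma) \cong \Mode(\Sigma)$, proved by Brendle--Margalit in the closed case \cite{Sep} and by Kida in the punctured case \cite{Kida}. Concretely I would show two things: (i) $\eta_{\vS(A)}$ is injective; and (ii) every automorphism $\phi \in \Aut \vC_{\vS(A)}(\Sigma)$ admits a canonical extension $\tilde\phi \in \Aut \vC_\vS(\Sigma)$. Given these, the cited rigidity produces a unique mapping class inducing $\tilde\phi$, which must then realise $\phi$.

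For injectivity, I would adapt Lemma \ref{Injectivity}. The smallness hypothesis is designed so that for every essential simple closed curve $\bc \subset \Sigma$, the complement $\Sigma \setminus \bc$ still has enough genus and punctures to accommodate the small topological types of separating regions represented in $\vC_{\vS(A)}(\Sigma)$. Using these I would exhibit a collection of vertices of $\vC_{\vS(A)}(\Sigma)$ whose boundary curves fill $\Sigma \setminus \bc$. Any $f \in \ker \eta_{\vS(A)}$ would then fix $\bc$; since $\bc$ was arbitrary, $f$ fixes a pants decomposition, and the centre-of-$\PMod(\Sigma)$ argument of Lemma \ref{Injectivity} forces $f$ to be trivial.

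For surjectivity, given $\phi \in \Aut \vC_{\vS(A)}(\Sigma)$ and a separating curve $c$ representing a vertex of $\vC_\vS(\Sigma) \setminus \vC_{\vS(A)}(\Sigma)$, exactly one of the two associated regions of $c$, say $Q_c$, contains no region represented in $A$, while the other region $Q_c'$ does. I would attach to $c$ the shadow $\Lambda(c) \subset \vC_{\vS(A)}(\Sigma)$ consisting of all vertices corresponding to separating curves lying inside $Q_c'$. The smallness hypothesis should imply that $\Lambda(c)$ determines $c$ and that $\phi(\Lambda(c))$ is itself the shadow of a unique separating curve; then one defines $\tilde\phi(c)$ to be that curve. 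Checking that $\tilde\phi$ respects disjointness gives an automorphism of $\vC_\vS(\Sigma)$ extending $\phi$, and the cited rigidity produces a mapping class $f$ with $\eta_{\vS(A)}(f) = \phi$.

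The main obstacle will be the well-definedness and simpliciality of this extension: one must verify that $\phi(\Lambda(c))$ is the shadow of a uniquely determined separating curve in $\vC_\vS(\Sigma)$, and that disjoint $c, c'$ produce shadows whose images correspond to disjoint curves. This is where the inequalities $(3)$ and $(4)$ defining smallness enter essentially, ensuring an abundance of small separating curves on both sides of any given curve sufficient to pin it down combinatorially. I expect this step to follow the template of Brendle--Margalit's analogous argument in the closed case \cite[Section 3]{BM17}, with the extra bookkeeping of punctures controlled by condition $(4)$.
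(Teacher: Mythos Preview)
Your injectivity argument is fine and matches Lemma~\ref{Injectivity}. The surjectivity strategy, however, diverges from the paper's and has a real gap.

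The paper does \emph{not} build a single extension $\tilde\phi$ from $\vC_{\vS(A)}(\Sigma)$ to $\vC_\vS(\Sigma)$ via shadows. Instead it proceeds iteratively: starting from $\vC_\vS(\Sigma)$ (where rigidity is known), it removes one vertex type $(k,l)$ at a time, producing a chain of linear subcomplexes ending at $\vC_{\vS(A)}(\Sigma)$. At each step, the removed $(k,l)$-curves are encoded by \emph{sharing pairs}---specific configurations of $(k+1,l)$- or $(k,l+1)$-vertices that intersect in a prescribed way (Lemmas~\ref{genus_sp}, \ref{puncture_sp})---and a connectivity result for the graph $\vSP(\bc)$ (Lemma~\ref{ShTr}) shows that automorphisms of the smaller complex extend to the larger one (Proposition~\ref{Step}). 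The smallness inequalities are used precisely to guarantee that these sharing-pair configurations exist and that vertex types are preserved (Proposition~\ref{vertex_prop}).

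Your shadow map $\Lambda(c)$ lacks the corresponding mechanism. The crux is step (ii): you must show that $\phi(\Lambda(c))$ is again the shadow of a \emph{unique} separating curve. For this you need an intrinsic combinatorial characterization of shadows inside $\vC_{\vS(A)}(\Sigma)$---something invariant under $\Aut\vC_{\vS(A)}(\Sigma)$---and you do not supply one. Your appeal to \cite[Section~3]{BM17} is misplaced: that section is exactly the sharing-pair machinery, not a shadow argument, and it works only because one removes a single vertex type at a time, so that the adjacent types $(k\pm 1,l)$, $(k,l\pm 1)$ are still present to form the sharing pairs. If you try to jump directly from $\vC_{\vS(A)}(\Sigma)$ to $\vC_\vS(\Sigma)$, the sharing pairs for a deeply missing curve may themselves be missing, and there is no obvious replacement. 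Either adopt the iterative scheme, or provide a genuine combinatorial invariant that singles out shadows.
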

Proving Theorem \ref{etas} is the first step on the proof of Theorem \ref{BigDaddy}.  Note that we may consider $\vC_{\vS(A)}(\Sigma)$ to be a complex of regions (since $\vS(A) \subset \vR(\Sigma)$) and so the definitions of minimal and small vertices of $\vC_{\vS(A)}(\Sigma)$ make sense.  Indeed, Theorem \ref{etas} is just a special case of Theorem \ref{BigDaddy}.  For a surface $\Sigma = \Sigma_{g,n}$, Theorem \ref{etas} has been proven for the cases when $n=0$ \cite[Theorem 1.10]{BM17} and $g=0$ \cite[Theorem 1.5]{BraidMeta}.  This section will deal with the general case when $g,n>0$.

We will prove Theorem \ref{etas} in three steps;

\subsection*{Step 1}
We consider a subset $X \subseteq \vS$ such that $\vS(A) \subseteq X$.  We may then define the subcomplex $\vC_X(\Sigma)$ in the usual way.  If $u$ and $v$ are two vertices of $\vC_X(\Sigma)$ that correspond to curves with homeomorphic associated regions we say that $u$ and $v$ are of the same \emph{vertex type}.  In Proposition \ref{vertex_prop} we prove that for any vertex $v \in \vC_X(\Sigma)$ and any automorphism $\phi \in \Aut \vC_X(\Sigma)$ the vertex $\phi(v)$ is of the same vertex type as $v$.

\subsection*{Step 2}
We consider $\vC_X(\Sigma)$ as above and let $\vC_Y(\Sigma)$ be the subcomplex of $\vC_X(\Sigma)$ obtained by removing all vertices of a particular vertex type.  In Proposition \ref{Step} we prove that if $\eta_X : \Mode(\Sigma) \to \Aut \vC_X(\Sigma)$ is an isomorphism then under certain conditions $\eta_Y : \Mode(\Sigma) \to \Aut \vC_Y(\Sigma)$ is also an isomorphism.

\subsection*{Step 3}
We have from Brendle-Margalit \cite{Sep} and Kida \cite{Kida} that the homomorphism
\[
\eta_\vS : \Mode(\Sigma) \to \Aut \vC_\vS(\Sigma)
\]
is an isomorphism.  We define a sequence of subcomplexes starting with $\vC_\vS(\Sigma)$ and ending with $\vC_{\vS(A)}(\Sigma)$. We are then able to use Step 2 (Proposition \ref{Step}) repeatedly in order to show that the homomorphism $\eta_{\vS(A)}$ from the statement of Theorem \ref{etas} is an isomorphism.

More informally, we begin with the isomorphism $\eta_\vS$ and show that by removing vertex types from the complex, we sustain an isomorphism between the extended mapping class group and the automorphism group of the subcomplex of separating curves.  Ensuring that the conditions of Proposition \ref{Step} are met follows from the assumption that every minimal vertex of $\vC_{\vS(A)}(\Sigma)$ is small.

\subsection{Characteristic vertex types}\label{vertex_types}

In order to tackle Step 1 we will introduce some terminology to help determine different vertex types. We call a separating curve $\bc$ in $\Sigma$ a \emph{$(k,l)$-curve} if it has an associated region $R$ of genus $k$ with $l$ punctures.  Note that a $(k,l)$-curve is also a $(g-k,n-l)$-curve.

If $\bc$ is a $(k,l)$-curve then for any $f \in \Mode(\Sigma)$ we have that $f(\bc)$ is a $(k,l)$-curve.
We call any vertex of $\vC_{\vS}(\Sigma)$ that corresponds to a $(k,l)$-curve a \emph{$(k,l)$-vertex}.  Our goal is therefore to show that the subset of $(k,l)$-vertices is characteristic in certain subcomplexes of $\vC_\vS(\Sigma)$ for any $k$ and $l$.

\subsection*{Sides}
Given a vertex $v$ of a subcomplex of separating curves $\vC_X(\Sigma)$ we say that vertices $u,w$ lie on the \emph{same side} of $v$ if $u,w \in \Lk (v)$ and there exists another vertex in $\Lk (v)$ that does not span an edge with either $u$ or $w$.

The following definitions will be useful when showing that vertex types form characteristic subsets.

\subsection*{Linear simplices}
We define a simplex $\sigma$ of $\vC_X(\Sigma)$ to be \emph{linear} if there is a labeling of its vertices $v_0,\dots, v_m$ such that $v_{i-1}$ and $v_{i+1}$ do not lie on the same side of $v_i$ for all $i = 1, \dots, m-1$. We call the vertices $v_0$ and $v_m$ the  \emph{extreme vertices} of the linear simplex $\sigma$.  We say that a linear simplex $\sigma \subset \vC_X(\Sigma)$ is \emph{maximal} if its vertices do not form a subset of another linear simplex.  Note that a $(1,0)$- or $(0,2)$-vertex $v$ belongs to a linear simplex $\sigma$ only when $v$ is an extreme vertex of $\sigma$.  Indeed, in such cases all vertices of $\Lk(v)$ lie on the same side.

For any two vertices $u, v \in \vC_X(\Sigma)$ we say that $u$ is an \emph{increment} of $v$ in $\vC_X(\Sigma)$ if there exists a maximal linear simplex $\sigma \subset \vC_X(\Sigma)$ in which $u$ and $v$ are sequential with respect to the labeling.  Note that $u$ is an increment of $v$ if and only if $v$ is an increment of $u$.

\subsection*{Linear subcomplexes}
We say that a subcomplex $\vC_X(\Sigma)$ of $\vC_\vS(\Sigma)$ is \emph{linear} if
\begin{center}
$u$ is an increment of $v$ in $\vC_X(\Sigma)$ $\Longrightarrow$ $u$ is an increment of $v$ in $\vC_\vS(\Sigma)$.
\end{center}

\subsection*{Genus increments}
Suppose the vertex $u$ is an increment of the vertex $v$ in some linear subcomplex $\vC_X(\Sigma)$.  Since $\vC_X(\Sigma)$ is linear the vertex $u$ must be an increment of $v$ in $\vC_\vS(\Sigma)$. Suppose $u$ and $v$ correspond to the boundary components of a region $R$ in $\Sigma$.  We observe that $R$ is homeomorphic to either $\Sigma_{1,0}^2$ or $\Sigma_{0,1}^2$.  Indeed, if this is not the case then we can find a curve in $R$ that separates the boundary components of $R$.  The existence of such a curve contradicts the fact that $u$ is an increment of $v$ in $\vC_\vS(\Sigma)$ and therefore $\vC_X(\Sigma)$ cannot be linear.

Given vertices $u, v \in \vC_X(\Sigma)$ and the region $R$ above; we say that $u$ is a \emph{genus increment} of $v$ if $R$ is homeomorphic to $\Sigma_{1,0}^2$, see Figure \ref{GenusIncrement}.  Once again, note that if $u$ is a genus increment of $v$, then $v$ is a genus increment of $u$.

\begin{lem}\label{Incr}
Let $\vC_X(\Sigma)$ be a linear subcomplex of $\vC_\vS(\Sigma)$ and let $u,v$ be vertices of $\vC_X(\Sigma)$. If $u$ is a genus increment of $v$ then $\phi(u)$ is a genus increment of $\phi(v)$ for all $\phi \in \Aut \vC_X(\Sigma)$.
\end{lem}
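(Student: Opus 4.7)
The strategy is to find a property of the ordered pair $(u,v)$ in $\vC_X(\Sigma)$ that characterizes the genus-increment relation combinatorially; once that is in hand, any $\phi \in \Aut \vC_X(\Sigma)$ must preserve it.

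First I would check that all of the building blocks used in the definition of ``genus increment" other than the topological type of the cobounded region are already invariants of the simplicial structure of $\vC_X(\Sigma)$: the notions of link, star, same side, linear simplex, maximal linear simplex, and increment are defined purely in terms of the $1$-skeleton and the flag property, so $\phi$ carries increments to increments. Combined with the hypothesis that $\vC_X(\Sigma)$ is a linear subcomplex, this means $\phi(u)$ is an increment of $\phi(v)$ also in $\vC_\vS(\Sigma)$, so the region $R'$ they cobound is homeomorphic to either $\Sigma_{1,0}^2$ or $\Sigma_{0,1}^2$. The remaining task is to distinguish these two cases combinatorially.

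The topological input is that $\Sigma_{1,0}^2$ admits an essential non-peripheral separating simple closed curve (bounding a once-holed torus on one side and a pair of pants on the other), whereas every essential separating simple closed curve in $\Sigma_{0,1}^2$ is peripheral, since any non-peripheral simple closed curve in $\Sigma_{0,1}^2$ is nonseparating. Concretely, I would seek a criterion for $u$ being a genus increment of $v$ of the following shape: there exist a vertex $w \in \vC_X(\Sigma)$ and a maximal linear simplex $\sigma'$ in $\vC_X(\Sigma)$ containing $\{w,u,v\}$ in which $w$ is an extreme vertex sequential to (say) $u$ on the opposite side from $v$, witnessing a once-holed torus cut from the interior of the region cobounded by $u$ and $v$. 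In the genus-increment case such a configuration exists because the once-holed torus inside $R \cong \Sigma_{1,0}^2$ yields a separating curve of $\Sigma$ whose link in $\vC_\vS(\Sigma)$ forces it to be an extreme vertex, while in the puncture-increment case no non-peripheral separating curve of $\Sigma$ lies in $R \cong \Sigma_{0,1}^2$, so no such $w$ exists.

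The main obstacle is that $X$ is an arbitrary subset of $\vS$ containing $\vS(A)$, so the internal witness $w$ need not a priori lie in $X$. Here I expect to use the standing hypothesis of Theorem \ref{etas} that every minimal vertex of $\vC_{\vS(A)}(\Sigma)$ is small, which forces $\Sigma$ to be large enough on both sides of $R$ to actually realise a $(1,0)$-vertex in $X$ playing the role of $w$. Granting this, a simplicial automorphism $\phi$ sends any combinatorial witness of a genus increment at $(u,v)$ to one at $(\phi(u),\phi(v))$, which forces $\phi(u)$ to be a genus increment of $\phi(v)$, completing the proof.
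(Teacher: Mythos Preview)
Your reduction to ``increments go to increments'' is correct, but the criterion you propose for distinguishing the two types of increment does not work. If $\bw$ is a separating curve inside $R\cong\Sigma_{1,0}^2$ cutting off a once-holed torus $T$, then \emph{both} $\bu$ and $\bv$ lie on the $\Sigma\setminus T$ side of $\bw$; consequently $\{w,u,v\}$ is never a linear simplex in any ordering, and in particular $w$ cannot be ``sequential to $u$ on the opposite side from $v$''. So the configuration you describe does not exist in the genus case any more than in the puncture case, and nothing about $R$ is being detected. Your fallback to the smallness hypothesis is also problematic: the lemma as stated assumes only that $\vC_X(\Sigma)$ is linear, and the paper's proof uses nothing more.

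The paper's argument is quite different and avoids needing any auxiliary vertex type to be present in $X$. It shows that, among increment pairs, $u$ is a genus increment of $v$ if and only if there exist vertices $x,y$ with $\{u,v,x,y\}$ spanning a square (a $4$-cycle $u\text{--}v\text{--}x\text{--}y\text{--}u$ with non-edges $u\text{--}x$ and $v\text{--}y$). For the forward direction one takes $x$ and $y$ to be Dehn-twist images of $\bu$ and $\bv$ along curves supported in the genus of $R$; these lie in the $\Mode(\Sigma)$-orbits of $u$ and $v$, hence automatically in $X$. For the converse, if $R\cong\Sigma_{0,1}^2$, a curve $\bx$ meeting $\bu$ and missing $\bv$ must cross $R$ so as to cut off the single puncture, and then any curve $\by$ meeting $\bv$ and missing $\bu$ is forced to hit either $\bu$ or $\bx$, so no square exists. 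The square condition is purely simplicial, so it is preserved by $\phi$, and the lemma follows without any appeal to smallness or to the presence of $(1,0)$-vertices in $X$.
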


\begin{proof}
We claim that the vertex $u$ is a genus increment of $v$ if and only if there exist vertices $x$ and $y$, such that the vertex set $\{u,v,x,y\}$ spans a square in $\vC_X(\Sigma)$. The result then follows from the claim. The forward implication of the claim is clear. We take appropriate Dehn twists of representative curves of $u$ and $v$, see Figure \ref{GenusIncrement}.
\begin{figure}[t]
\centering
\labellist \hair 1pt
	\pinlabel {$\bu$} at 400 420
	\pinlabel {$\bv$} at 840 400
    \endlabellist
\includegraphics[scale=0.15]{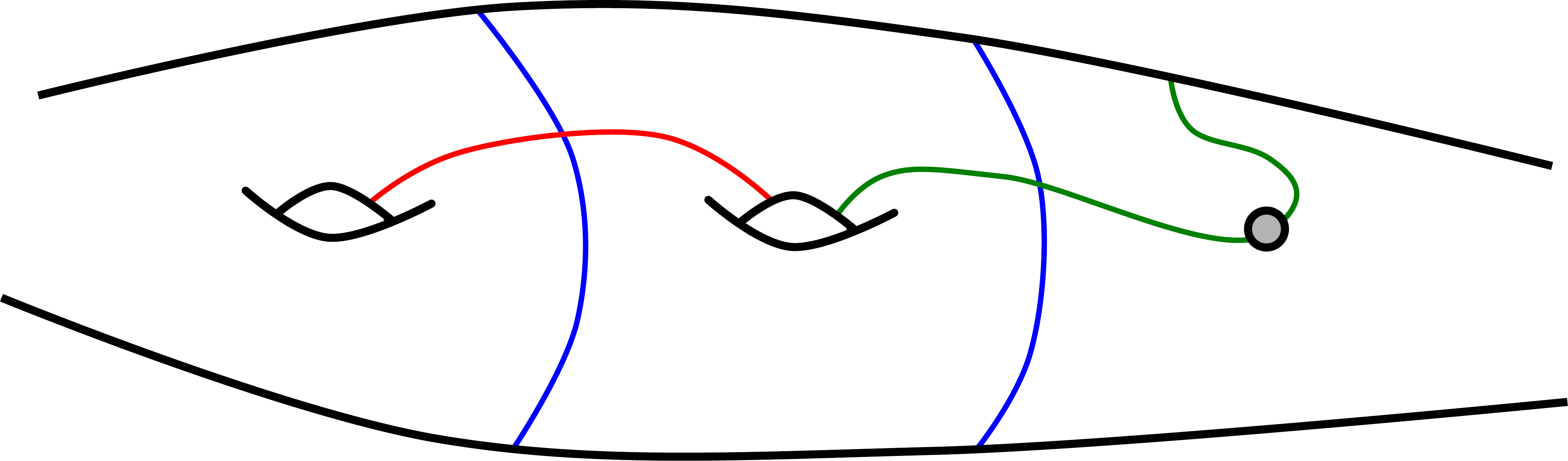}
\caption{The vertex corresponding to the curve $\bu$ is a genus increment of the vertex corresponding to the curve $\bv$.}
\label{GenusIncrement}
\end{figure}

Suppose now that the vertex $u$ is an increment of the vertex $v$ and vertices $x$ and $y$ exist as in the claim. Let $R$ be a region such that $u$ and $v$ correspond to the the boundary components of $R$. Assume $u$ is not a genus increment of $v$, that is, $R$ is homeomorphic to a punctured annulus. Let the vertices $u$ and $x$ correspond to the curves $\bu$ and $\bx$ respectively such that $\bu$ and $\bx$ are in minimal position. Take $B$ to be the regular neighbourhood of $\bu \cup \bx$. One of the components of $\partial B$ is the boundary of a disc $D_1 \subset R$ with a single puncture. Since $y$ does not span an edge with $v$ it corresponds to a curve that intersects either to $\bu$ or the disc $D_1$. This implies that $y$ fails to span an edge with either $u$ or $x$, a contradiction.
\end{proof}

We can now begin to prove that vertex types form characteristic subsets of a linear subcomplex $\vC_X(\Sigma)$.  We do this in two steps, the first of which deals with the minimal vertices of $\vC_X(\Sigma)$.  We will make use of the following result of Andrew Putman \cite{PutmanConnect}.

\begin{lem}[Putman]\label{Put}
Let $G$ be a group acting on a simplicial complex $X$ with $v$ a fixed vertex in $X^0$. Let $S$ be a set of generators of $G$ and assume that;
\begin{enumerate}
\item for all $u \in X^0$, the orbit $G \cdot v$ intersects the connected component of $X$ containing $u$, and
\item for all $s \in S^{\pm 1}$, there is a path $P_s$ in $X$ from $v$ to $s \cdot v$.
\end{enumerate}
Then $X$ is connected.
\end{lem}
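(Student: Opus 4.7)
The plan is to prove that $X$ is connected by showing every vertex lies in the same connected component as the basepoint $v$. Fix an arbitrary $u \in X^0$; by hypothesis (1) there exists $g \in G$ such that $g \cdot v$ lies in the same connected component as $u$, so there is already a path in $X$ joining $u$ to $g \cdot v$. It therefore suffices to construct, for every $g \in G$, a path in $X$ from $v$ to $g \cdot v$: concatenating this with the known path from $g \cdot v$ to $u$ yields a path from $v$ to $u$, and $u$ was arbitrary.

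To build a path from $v$ to $g \cdot v$ I would use the generating set $S$ and translate the given paths by group elements. Write $g = s_1 s_2 \cdots s_n$ with each $s_i \in S^{\pm 1}$, and set $v_0 := v$ and $v_i := s_1 \cdots s_i \cdot v$ for $1 \le i \le n$. Since $v_i = (s_1 \cdots s_{i-1}) \cdot (s_i \cdot v)$, the pair $(v_{i-1}, v_i)$ is the image under the simplicial automorphism $s_1 \cdots s_{i-1}$ of the pair $(v, s_i \cdot v)$. Applying this automorphism to the path $P_{s_i}$ provided by hypothesis (2) yields a path in $X$ from $v_{i-1}$ to $v_i$. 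Concatenating these $n$ translated paths produces the desired path from $v = v_0$ to $v_n = g \cdot v$.

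The argument is essentially a bootstrapping procedure: hypothesis (2) gives paths from $v$ to the images of $v$ under single generators, translating these paths by group elements builds paths from $v$ to every point in its $G$-orbit, and hypothesis (1) then extends connectedness from the orbit of $v$ to the whole vertex set. I do not foresee a serious obstacle; the only implicit input is that $G$ acts by simplicial automorphisms, so that the image under any group element of a path in $X$ is again a path in $X$. This is part of the standing meaning of a ``group acting on a simplicial complex'' and requires no extra hypothesis. The use of $S^{\pm 1}$ rather than $S$ in condition (2) is precisely what allows us to write $g$ as any word in generators and inverses without worrying about which direction along $P_s$ to travel.
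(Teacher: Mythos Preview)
Your proof is correct and is exactly the standard argument for Putman's connectivity lemma. The paper itself does not supply a proof of this statement; it is quoted as a result of Putman with a citation to \cite{PutmanConnect}, so there is no in-paper proof to compare against. Your write-up matches the proof Putman gives in the cited reference.
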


Let $v$ be a $(k,l)$-vertex of $\vC_X(\Sigma)$.  Recall from Section \ref{introduction} that $v$ is small if there exists a $(k_1,l_1)$-vertex and a $(k_2,l_2)$-vertex in $\vC_X(\Sigma)$ such that;
\begin{align}
g &\ge k + \max\{ k_1 + k_2 , 2\} + 1, \mbox{ and} \\ 
n &\ge l + \max \{ l_1 + l_2 , 1\} + 1.
\end{align}

\begin{lem}\label{MinCurve}
Let $\vC_X(\Sigma)$ be a linear subcomplex of $\vC_\vS(\Sigma)$ where every minimal vertex is small. Let $v$ be a $(k,l)$-vertex of $\vC_X(\Sigma)$.  If $v$ is a minimal vertex then $\phi(v)$ is a $(k,l)$-vertex for all $\phi \in \Aut \vC_X(\Sigma)$.
\end{lem}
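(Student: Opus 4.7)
The plan is to encode the type $(k,l)$ of a minimal vertex $v$ as a combinatorial invariant of the local structure of $v$ in $\vC_X(\Sigma)$ that is preserved by $\Aut \vC_X(\Sigma)$, using Lemma \ref{Incr} together with Putman's lemma (Lemma \ref{Put}). The link of $v$ decomposes naturally into two sides corresponding to the two complementary regions of the separating curve representing $v$. Because $v$ is minimal, the side corresponding to $\wh v \cong \Sigma_{k,l}^1$ is topologically the smallest subsurface that can envelop a vertex of $\vC_X(\Sigma)$, so $v$ must be an extreme vertex of every maximal linear simplex whose vertices all lie on its small side. The smallness hypothesis on $v$, through the auxiliary vertices $u_1, u_2$ provided by the definition, guarantees that the opposite side contains enough disjoint subsurfaces to support vertices of a variety of types; together these observations distinguish the small side from the large side combinatorially.

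I would then characterize $(k,l)$ by combinatorial invariants of the genus-increment graph, whose edges are preserved by $\Aut \vC_X(\Sigma)$ by Lemma \ref{Incr}. The length of the longest chain of successive genus increments extending $v$ into its small side recovers $k$, and a companion construction using sequential separations that cut off a single puncture recovers $l$. Putman's lemma is then invoked to show that the subgraph of minimal $(k,l)$-vertices is connected under the adjacency given by sharing a common linear simplex, so any $\phi \in \Aut \vC_X(\Sigma)$ permutes the collection of minimal $(k,l)$-vertices as a set, proving the claim.

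The principal obstacle, in my view, is disentangling $k$ from $l$: the genus-increment relation as defined in the text is ``adjacency through a $\Sigma_{1,0}^2$'' and does not directly see the $\Sigma_{0,1}^2$-adjacency needed to track $l$. A separate puncture-increment mechanism is therefore needed, together with a verification, analogous to Lemma \ref{Incr}, that this mechanism is preserved by $\Aut \vC_X(\Sigma)$. The linearity hypothesis on $\vC_X(\Sigma)$ should ensure that such puncture-sensitive adjacencies behave well, mirroring the role of linearity in the proof of Lemma \ref{Incr}, but carrying out this verification cleanly in the presence of both genus and punctures is where the technical work lies.
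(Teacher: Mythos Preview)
Your outline has the right ingredients—minimal vertices as extremes of maximal linear simplices, Lemma~\ref{Incr}, and Putman's connectivity lemma—but the specific mechanism you propose does not work, and the obstacle you single out is in fact a red herring that the paper sidesteps.

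First, the plan to read off $k$ as ``the length of the longest chain of successive genus increments extending $v$ into its small side'' fails as stated: since $v$ is minimal, its small side contains \emph{no} other vertices of $\vC_X(\Sigma)$ at all, so there is nothing to extend into. Any chain of increments starting at $v$ runs into the large side, and its length then depends on the type of the opposite extreme vertex, not solely on $(k,l)$.

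Second, and more to the point, you do \emph{not} need a puncture-increment analogue of Lemma~\ref{Incr}. The paper records, for each maximal linear simplex $\sigma$, the pair $(N,K)$ where $N=|\sigma|$ and $K$ is the number of genus increments along $\sigma$; both are $\Aut\vC_X(\Sigma)$-invariant, the first trivially and the second by Lemma~\ref{Incr}. The number of puncture increments is then just $N-1-K$, so it is preserved automatically with no extra work. This dissolves what you call the principal obstacle.

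With that in hand, the paper's argument is relational rather than ``count a chain''. One declares two minimal vertices $v_1,v_2$ to satisfy $v_1\sim v_2$ when there is a third minimal vertex $u$ such that $u,v_i$ are the two extremes of maximal linear simplices $\sigma_i$ with identical $(N,K)$. One checks (from the formulas $K=g-k-k'$ and $N-1-K=n-l-l'$ for extremes of types $(k,l)$ and $(k',l')$) that $v_1\sim v_2$ forces $v_1,v_2$ to have the same type, and that $\sim$ is preserved by every automorphism. Putman's lemma is then applied, much as you suggest, to the graph of minimal vertices with edges given by $\sim$, showing that the $(k,l)$-minimal vertices form a single connected component; since the $(N,K)$-data attached to a component is an invariant, $\phi$ cannot move one component to another. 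So your instinct to use Putman is correct, but the adjacency is not ``share a common linear simplex''—it is ``share a common opposite extreme with matching $(N,K)$''.
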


\begin{proof}
We first show that the set of minimal vertices is characteristic.  This is clear, as a vertex $v$ is minimal in $\vC_X(\Sigma)$ if and only if it is an extreme vertex of some maximal linear simplex.

Assume then that the set of minimal vertices contains $(k,l)$-vertices for some values of $k$ and $l$.  We need to show that vertices of this type form a characteristic subset.  For any two minimal vertices $v_1,v_2$ we will write $v_1 \sim v_2$ if;
\begin{enumerate}
\item there exists a vertex $u$ such that $u$ and $v_i$ are extreme vertices of some maximal linear simplex $\sigma_i$,
\item the simplices $\sigma_1$ and $\sigma_2$ have $N$ vertices, and
\item the simplices $\sigma_1$ and $\sigma_2$ have $K \le N$ genus increments.
\end{enumerate}
Let $v_1$ and $v_2$ correspond to the curves $\bc_1$ and $\bc_2$ respectively.  Suppose $u$ corresponds to a curve with associated region $Q$ disjoint from $\bc_1$ and $\bc_2$.  It follows that $\bc_1$ and $\bc_2$ bound regions $R_1$ and $R_2$ such that; $g(R_i) = g(Q) + K$ and that $n(R_i) = n(Q) + N - K$.  We conclude therefore that if $v_1 \sim v_2$ then they are of the same vertex type.

Let $\vM$ be the graph whose vertex set is all the minimal vertices of $\vC_X(\Sigma)$.  Two vertices $v_1,v_2 \in \vM$ share an edge whenever $v_1 \sim v_2$.  Let $v$ be some fixed vertex of $\vM$ corresponding to the curve $\bv$. By the definition of `$\sim$' we have that if two vertices are connected in $\vM$ then the are of the same vertex type.  Let $\vM(k,l)$ be the subgraph of $\vM$ spanned by $(k,l)$-vertices.  The mapping class group $\Mode(\Sigma)$ acts naturally on $\vM(k,l)$ and each vertex $u \in \vM(k,l)$ corresponds to some curve $f(\bv)$.  This implies that the first condition of Lemma \ref{Put} is satisfied with respect to the subgraph $\vM(k,l)$.

There exists a generating set $S$ of $\Mode(\Sigma)$ such that every element of $S$ fixes $\bv$ except a $l$ Dehn twists and a single half twist, see Figure \ref{minimal_curve_graph}.
\begin{figure}[t]
\centering
\labellist \hair 1pt
	\pinlabel {$\bu$} at 350 340
	\pinlabel {$\bv$} at 920 340
	\pinlabel {{\huge $\dots$}} at 550 20
	\pinlabel {{\huge $\dots$}} at 550 290
    \endlabellist
\includegraphics[scale=0.2]{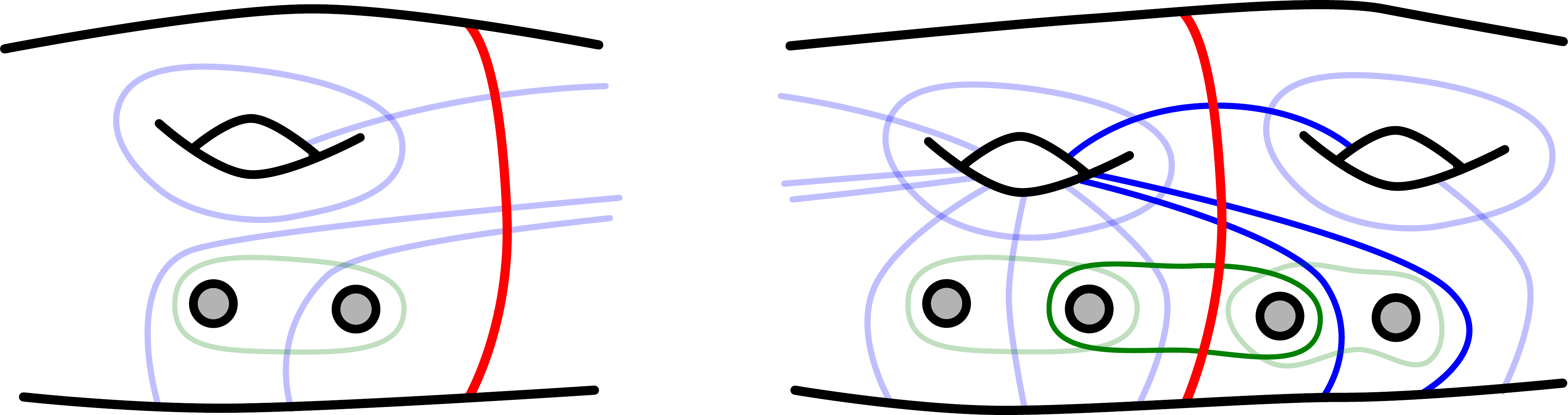}
\caption{Any Dehn twist or half twist about one of the curves shown either fixes the $(k,l)$-curve $\bv$, or else both $\bv$ and its image are contained in a subsurface $R$ homeomorphic to $\Sigma_{k,l+1}^2$.  There exists a curve $\bu$ that does not intersect $R$.}
\label{minimal_curve_graph}
\end{figure}
If $s \in S$ then $\bv, s(\bv)$ are both contained in a subsurface $R \cong \Sigma_{k,l+1}^2$. As $(k,l)$-vertices are small, there exists a minimal vertex $u$ of $\vC_X(\Sigma)$ that spans an edge with both the vertex $v$ and the vertex $s \cdot v$ corresponding to $s(\bv)$.  It follows that $ v \sim s \cdot v$.  This satisfies the second condition of Lemma \ref{Put} and so the subgraph $\vM(k,l)$ is connected.  It follows that $\phi(v)$ is a vertex of $\vM(k,l)$, completing the proof.
\end{proof}

We can now finally prove that each vertex type determines a characteristic subset of vertices in the linear subcomplex $\vC_X(\Sigma)$.

\begin{prop}\label{vertex_prop}
Let $\vC_X(\Sigma)$ be a linear subcomplex of $\vC_\vS(\Sigma)$ where every minimal vertex is small.  Let $v$ be a vertex of $\vC_X(\Sigma)$. If $v$ is a $(k,l)$-vertex then $\phi(v)$ is a $(k,l)$-vertex for all $\phi \in \Aut \vC_X(\Sigma)$.
\end{prop}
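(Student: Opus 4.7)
The plan is to bootstrap from Lemma \ref{MinCurve} using the linear structure of $\vC_X(\Sigma)$. Given a vertex $v \in \vC_X(\Sigma)$, I would first embed $v$ in a maximal linear simplex $\sigma$ with labeling $v_0, v_1, \ldots, v_m$ and $v = v_i$ for some $i \in \{0, \ldots, m\}$. Such a simplex exists: start with the $0$-simplex $\{v\}$ and iteratively adjoin vertices at either end; since $\Sigma$ has finite topological complexity, the process must terminate. By the characterization established at the start of the proof of Lemma \ref{MinCurve}, both extreme vertices $v_0$ and $v_m$ are then minimal.

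Next, I would read off the vertex type of $v = v_i$ from combinatorial data attached to $\sigma$. Because $\vC_X(\Sigma)$ is linear, each consecutive pair $(v_{j-1}, v_j)$ is an increment in $\vC_\vS(\Sigma)$, and by the discussion preceding Lemma \ref{Incr}, the region $R_j$ they cobound is homeomorphic to either $\Sigma_{1,0}^2$ or $\Sigma_{0,1}^2$. If $v_0$ is a $(k_0, l_0)$-vertex, with its small associated region $\Sigma_{k_0, l_0}^1$ lying on the side opposite to $v_1$, then a direct induction shows that the side of $v_i$ containing $v_0$ has genus $k_0 + g_i$ and $l_0 + (i - g_i)$ punctures, where $g_i$ is the number of genus increments among $R_1, \ldots, R_i$. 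In particular, the vertex type of $v$ is completely determined by the triple consisting of $(k_0, l_0)$, the position $i$, and the genus-increment pattern along $\sigma$.

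Finally, I would apply an arbitrary $\phi \in \Aut \vC_X(\Sigma)$. Since $\phi$ preserves both adjacency and the ``same side'' relation defining linearity, the image $\phi(\sigma)$ with labeling $\phi(v_0), \ldots, \phi(v_m)$ is again a maximal linear simplex; in particular, $\phi(v_0)$ and $\phi(v_m)$ are still minimal. Lemma \ref{MinCurve} then forces $\phi(v_0)$ to remain of type $(k_0, l_0)$, and Lemma \ref{Incr} forces each pair $(\phi(v_{j-1}), \phi(v_j))$ to be a genus increment precisely when $(v_{j-1}, v_j)$ is. Running the same computation as above on $\phi(\sigma)$ yields that $\phi(v)$ is a $(k_0 + g_i, l_0 + (i - g_i))$-vertex, so it has the same vertex type as $v$.

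The main obstacle I anticipate is the first step: producing a maximal linear simplex through $v$ whose extreme vertices are guaranteed to be minimal, and confirming that each consecutive pair cobounds a $\Sigma_{1,0}^2$ or $\Sigma_{0,1}^2$ rather than something more complicated. Both points rest on the characterization already invoked in the proof of Lemma \ref{MinCurve} together with the linearity hypothesis, after which the remainder of the argument is pure bookkeeping combining Lemmas \ref{MinCurve} and \ref{Incr}.
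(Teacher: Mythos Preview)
Your proposal is correct and follows essentially the same approach as the paper: embed $v$ in a maximal linear simplex, use Lemma~\ref{MinCurve} to pin down the type of an extreme vertex, then use Lemma~\ref{Incr} together with the position of $v$ in the labeling to recover its type, and observe that all of this data is preserved by $\phi$. Your write-up is in fact slightly more explicit than the paper's about why the maximal linear simplex exists and why consecutive vertices cobound a $\Sigma_{1,0}^2$ or $\Sigma_{0,1}^2$.
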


\begin{proof}
Let $v$ be a $(k,l)$-vertex of $\vC_X(\Sigma)$ corresponding to the curve $\bv$ and let $\phi$ be an automorphism of $\vC_X(\Sigma)$ as in the statement of the proposition. Suppose the vertex $\phi(v)$ corresponds to the curve $\bc$. We need to show that $\bc$ is a $(k,l)$-curve.

Since $\vC_X(\Sigma)$ is connected, there exists a maximal linear simplex $\sigma$ containing $v$. Suppose one of the extreme vertices of $\sigma$ is a $(\tilde k, \tilde l)$-vertex $u$.  From Proposition \ref{MinCurve} we have that $\phi(u)$ is an extreme vertices of $\phi(\sigma)$ and is also a $(\tilde k, \tilde l)$-vertex.  If there are $N$ vertices between $u$ and $v$ in the labeling of $\sigma$ then there are $N$ vertices between $\phi(u)$ and $\phi(v)$ in the labeling of $\phi(\sigma)$.  Finally, from Lemma \ref{Incr}, if there are $K$ genus increments between $u$ and $v$ in $\sigma$ then there are $K$ genus increments between $\phi(u)$ and $\phi(v)$ in $\phi(\sigma)$.

Without loss of generality we can assume that $k = \tilde k + K$ and $l = \tilde L + N - K$ and so it follows that $\bc$ is a $(k,l)$-curve.
\end{proof}

Note that in order to prove that vertex types determine characteristic subsets for a surface $\Sigma = \Sigma_{g,n}$ where $g=0$ (or $n=0$) we need only define maximal linear simplices. Indeed, all minimal vertices are of the same vertex type and all increments are genus increments (or no increments are genus increments).

\subsection{Sharing pairs}\label{SP}

In Section \ref{vertex_types} we discussed linear subcomplexes of the separating curve complex $\vC_\vS(\Sigma)$.  The purpose of this section is to show that certain intersection data is characteristic to these subcomplexes.  We will generalise the notion of \emph{sharing pairs} defined by Brendle-Margalit \cite[Section 3]{BM17} into two flavours.  In each case, we say a pair of $(k,l)$-curves $\ba,\bb$ \emph{share} a curve $\bc$. If $\bc$ is $(k-1,l)$-curve we call $\ba,\bb$ a \emph{genus sharing pair}. If $\bc$ is $(k,l-1)$-curve we call $\ba,\bb$ a \emph{puncture sharing pair}.  We use these definitions to complete Step 2 of the strategy outlined at the beginning of Section \ref{section_subcomplexes}.  More precisely, we show that an isomorphism $\Mode(\Sigma) \to \Aut \vC_X(\Sigma)$ implies an isomorphism $\Mode(\Sigma) \to \Aut \vC_Y(\Sigma)$, where $\vC_Y(\Sigma)$ is a particular subcomplex of $\vC_X(\Sigma)$ and both are linear subcomplexes of $\vC_\vS(\Sigma)$.

Before we give the definition of \emph{sharing pairs} we introduce \emph{arcs} to facilitate the discussion. Let $R$ be a surface with boundary. In our setting, an \emph{arc} in $R$ is a continuous image of the interval whose endpoints map to the boundary of $R$. Let $\vC_X(\Sigma)$ be a linear subcomplex of $\vC_\vS(\Sigma)$ and let $z$ be a vertex of $\vC_X(\Sigma)$ corresponding to a curve with an associated region $R$.  Let $SA(R)$ be the set whose elements are the, possibly empty, sets of arcs in $R$. We can define a projection map
\[
\pi_z : \vC_{X}(\Sigma) \rightarrow SA(R).
\]
Note that we may also define a map from $\vC_X(\Sigma)$ to $SA(\Sigma \sm R)$.

If $v$ is a vertex of $\vC_X(\Sigma)$ that shares an edge with $z$ then $\pi_z(v) = \emptyset$. If $v$ and $z$ do not share an edge then $v$ corresponds to a curve whose intersection with $R$ is a nonempty collection of disjoint arcs, that is,
\begin{center}
$v$ and $z$ fail to span an edge in $\vC_X(\Sigma)$ $\iff$ $\pi_z(v) \in SA(R) \sm \emptyset$.
\end{center}
For a vertex $v \in \vC_X(\Sigma)$, if the projection $\pi_z(v)$ is a set arcs that belong to the same free isotopy class then it makes sense to think of $\pi_z(v)$ as a single arc.  We call an arc $\alpha$ \emph{non-separating} if $R \setminus \alpha$ is a single connected subsurface, otherwise we call it \emph{separating}. As we can see from Figure \ref{Handle}, it is possible for a vertex $v \in \vC_X(\Sigma)$ to project to a non-separating arc $\pi_z(v) \in SA(R)$.
\begin{figure}[t]
\centering
\includegraphics[scale=0.13]{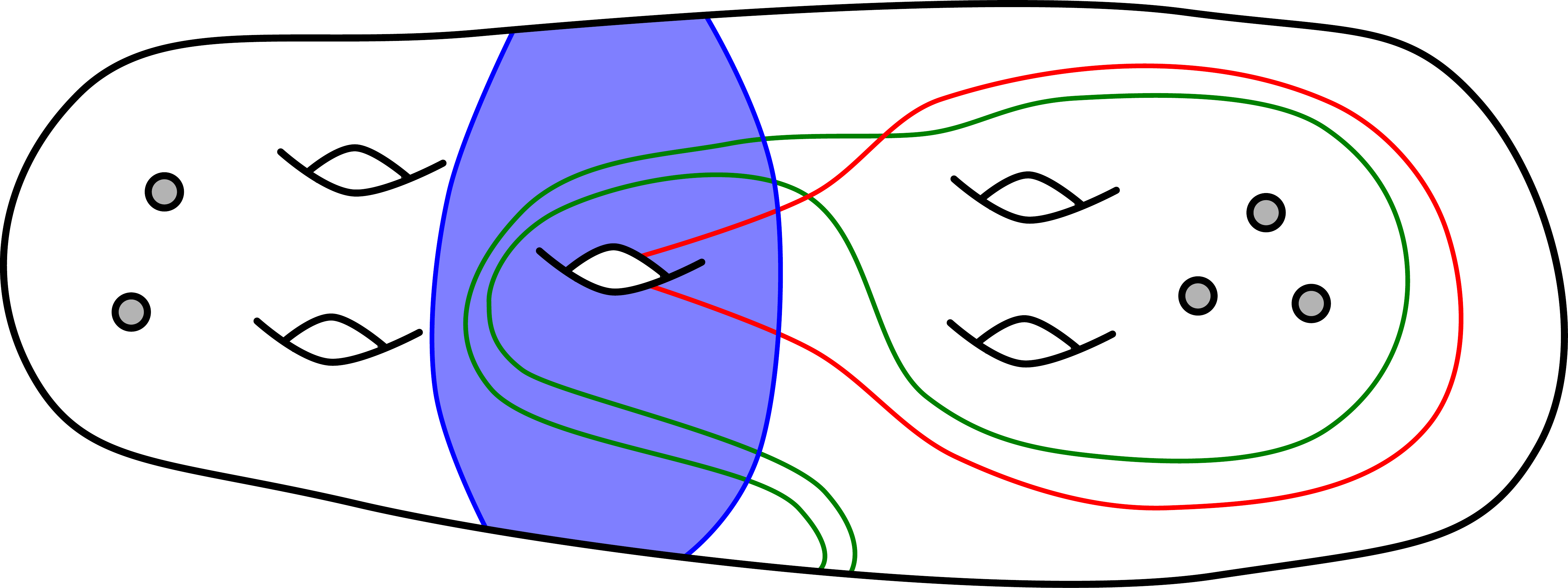}
\caption{Two separating curves such that the union of their projections define the shaded torus with two boundary components, $\Sigma_{1,0}^2$.}
\label{Handle}
\end{figure}

The following definitions, and Lemma \ref{TripleThreat}, are used in the subsequent discussion of \emph{genus sharing pairs}.  We assume that $\vC_X(\Sigma)$ is a linear subcomplex of $\vC_\vS(\Sigma)$.

\subsection*{Unlinked projections and handle pairs}
Let $z \in \vC_X(\Sigma)$ corresponding to $\partial R$ for some region $R$.  Two vertices $u, v$ of $\vC_X(\Sigma)$ are said to have \emph{unlinked projections} if there exists a connected segment of $\partial R$ intersecting a component of $\pi_z(u)$ twice but not intersecting $\pi_z(v)$.  The vertices $u, v$ form a \emph{handle pair} for $R$ if $\pi_z(u)$ and $\pi_z(v)$ are distinct non-separating arcs of $R$ with representatives that lie on some subsurface $Q \subset R$ such that $Q \cong \Sigma_{1,0}^2$, see Figure \ref{Handle}.

Recall that for a vertex $v$ in $\vC_X(\Sigma)$ we say that vertices $u,w$ lie on the same side of $v$ if $u,w \in \Lk (v)$ and there exists another vertex in $\Lk (v)$ which does not span an edge with either $u$ or $w$. If $v$ is a $(k,l)$-vertex then we say that a vertex lies on a \emph{small side} of $v$ if it does not lie on the same side as a $(k+1,l)$-vertex.
The following result is analagous to a result of Brendle-Margalit in the closed case \cite[Lemma 3.2]{BM17}.

\begin{lem}\label{TripleThreat}
Let $\vC_X(\Sigma)$ be a linear subcomplex of $\vC_\vS(\Sigma)$ where every minimal vertex is small.  Let $\phi \in \Aut \vC_X(\Sigma)$. Suppose $\vC_X(\Sigma)$ contains $(k,l)$ and $(k+1,l)$-vertices and let $z$ be a $(k+1,l)$-vertex.  Let $u$ and $v$ be two vertices of $\vC_X(\Sigma)$ such that $\pi_z(u)$ and $\pi_z(v)$ are distinct, non-separating arcs.
\begin{enumerate}
\item The projection $\pi_{\phi(z)}(\phi(u))$ is a non-separating arc;
\item If $\pi_z(u)$ and $\pi_z(v)$ are unlinked non-separating arcs then $\pi_{\phi(z)}(\phi(u))$ and $\pi_{\phi(z)}(\phi(v))$ are unlinked non-separating arcs.
\item If $\pi_z(u)$ and $\pi_z(v)$ are a handle pair then $\pi_{\phi(z)}(\phi(u))$ and $\pi_{\phi(z)}(\phi(v))$ are a handle pair.
\end{enumerate}
\end{lem}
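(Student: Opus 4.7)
The plan is to establish each of the three statements by finding a purely combinatorial characterization of the relevant geometric property—non-separating projection, unlinked projections, and handle pair—inside $\vC_X(\Sigma)$, and then appealing to the fact that $\phi$ preserves vertex types (Proposition \ref{vertex_prop}), adjacencies, and the genus increment relation (Lemma \ref{Incr}). Throughout, the hypothesis that $\vC_X(\Sigma)$ contains both $(k,l)$- and $(k+1,l)$-vertices guarantees that we have enough ``witness'' vertices of the right types to carry out the characterizations.

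For part (1), I would characterize ``$\pi_z(u)$ is a non-separating arc'' as follows: $\pi_z(u)$ is non-separating in $R$ if and only if there exists a $(k,l)$-vertex $z'$ which is a genus increment of $z$ and which spans an edge with $u$. The forward direction is geometric: one can slide $\bz$ across the handle created by $\pi_z(u)$ together with a subarc of $\partial R$ to produce such a $z'$. The reverse direction uses that if $\pi_z(u)$ were separating, then any $(k,l)$-vertex $z'$ adjacent to $u$ and inside $R$ would have to be contained in one of the two components of $R \setminus \pi_z(u)$, forcing $z'$ to have too few punctures or too little genus to be a genus increment of $z$. Since $\phi$ preserves both the vertex type of $z'$ and the genus-increment relation, the characterization transfers under $\phi$.

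For parts (2) and (3), I would build up analogous combinatorial witnesses. Unlinkedness of two non-separating arcs $\pi_z(u)$ and $\pi_z(v)$ can be detected by the existence of a pair of $(k,l)$-vertices $z_1',z_2'$, each a genus increment of $z$, with $z_1'$ adjacent to $u$ and $z_2'$ adjacent to $v$, such that $z_1'$ and $z_2'$ in turn span an edge; geometrically, this witnesses that one can ``split off'' the two handles independently, which is possible precisely when the arcs are unlinked. A handle pair, by contrast, should be characterized by the $u,v$ failing the unlinked condition but still admitting a single common witness $(k,l)$-vertex $z'$ adjacent to both $u$ and $v$, representing the fact that their two arcs cobound a $\Sigma_{1,0}^2$ subsurface of $R$ whose complement gives a $(k,l)$-region. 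Once the characterizations are established, parts (2) and (3) are immediate from Proposition \ref{vertex_prop} and Lemma \ref{Incr} applied to the witness vertices.

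The main obstacle will be pinning down the witness characterizations so that each ``if and only if'' is genuinely correct and uses only vertex types known to live in $\vC_X(\Sigma)$. The subtlety is ruling out alternative topological configurations—for example, a pair of arcs cobounding a $\Sigma_{0,2}^2$ rather than $\Sigma_{1,0}^2$, or a non-separating arc that is actually a boundary-parallel component of a disconnected projection. For the punctured case, this requires carefully tracking both the genus and the puncture count of the associated regions of each witness vertex, which is where the hypothesis that every minimal vertex is small is used: it guarantees the existence of sufficiently many low-complexity witnesses in $\vC_X(\Sigma)$ to distinguish the handle-pair configuration from other linked configurations.
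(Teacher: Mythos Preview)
Your overall strategy---finding combinatorial characterizations inside $\vC_X(\Sigma)$ and then invoking the preservation of vertex types---is exactly what the paper does. However, your specific characterizations have genuine gaps, and the reliance on the genus-increment relation is the source of the trouble.

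For part (1), your reverse implication only rules out the case where $\pi_z(u)$ is a \emph{single separating} arc. It does not rule out the possibility that $\pi_z(u)$ consists of \emph{several} arcs. Concretely, take $z'$ a $(k,l)$-genus increment of $z$, so that $\bz$ and $\bz'$ cobound $T\cong\Sigma_{1,0}^2$. One can place a separating curve $\bu$ disjoint from $\bz'$ but meeting $\bz$ in four points, so that $\pi_z(u)$ is a pair of non-isotopic arcs in $T$. Your witness $z'$ still exists, yet $\pi_z(u)$ is not a single arc. The paper avoids this by using a different witness: $\pi_z(u)$ is a single non-separating arc if and only if there is \emph{more than one} $(k,l)$-vertex adjacent to $u$ on the small side of $z$, and then explicitly checks that multiple-arc projections admit at most one such neighbour.

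For part (2), your witnesses simply cannot exist in general. If $z_1'$ and $z_2'$ are distinct $(k,l)$-vertices that are both genus increments of $z$ and span an edge, then the two copies of $\Sigma_{k,l}^1$ they bound are disjoint inside $R\cong\Sigma_{k+1,l}^1$; a genus and puncture count forces $2k+l\le 2$, so for almost all $(k,l)$ the condition is vacuous. The paper takes the opposite tack: it characterizes \emph{linked} (not unlinked) by the existence of a single $(k,l)$-vertex on the small side of $z$ adjacent to both $u$ and $v$, and then observes that for two disjoint non-separating arcs, linked is equivalent to being a handle pair. Thus (2) and (3) fall out together from one simple criterion, with no need for Lemma~\ref{Incr} at all.
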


\begin{proof}
Let $R$ be a region of genus $k+1$ with $l$ punctures such that $z$ corresponds to $\partial R$. For the first statement we claim that $\pi_z(u)$ is a non-separating arc if and only if there is more than one $(k,l)$-vertex in $\Lk(u)$ that lies on the small side of $z$.  To prove the forward direction we assume that $\pi_z(u)$ is a non-separating arc.  It follows then that $R \setminus \pi_z(u) \cong \Sigma_{k,l}^2$. As there are infinitely many $(k,l)$-curves in $\Sigma_{k,l}^2$, the implication is clear.

We deal with the other direction of the claim in two cases; either $\pi_z(u)$ contains the homotopy class of a separating arc or it contains more than one homotopy class of non-separating arcs. Suppose we are in the first case. If we cut $R$ by a separating arc it results in two surfaces $R_1$ and $R_2$.  It must be that $R_1 \cong \Sigma_{k_1,l_1}^1$ and $R_2 \cong \Sigma_{k_2,l_2}^1$, with $k_2 \ge k_1$, $k_1 + k_2 = k+1$, and $l_1 + l_2 = l$. If $w$ is a vertex in $\Lk(u)$ that lies on the small side of $z$ then it must correspond to a curve contained in either $R_1$ or $R_2$.  If $w$ is a $(k,l)$-vertex then we have that $k_1 = 1$ and $l_1 =0$.  It follows that $w$ is unique, a contradiction.

In the second case, suppose we cut along two distinct and disjoint non-separating arcs in $R$.  Either we obtain a surface of genus $k$ and $l$ punctures or we obtain one or two surfaces of genus less than $k$.  Therefore, either there exists a single $(k,l)$-vertex adjacent to $u$ on the small side of $z$ or there are none. This completes the proof of the first statement.

To prove the second statement let $u$ and $v$ be adjacent vertices such that $\pi_z(u)$ and $\pi_z(v)$ are unlinked non-separating arcs.  These arcs are distinct if and only if there exists a $(k,l)$-vertex of $\vC_X(\Sigma)$ on the small side of $z$ that is adjacent to $u$ but not $v$. To prove the statement then we claim that the arcs $\pi_z(u)$ and $\pi_z(v)$ are linked if and only if there exists a $(k,l)$-vertex $w$ in $\vC_X(\Sigma)$ that lies on the small side of $z$ and is adjacent to both $u$ and $v$.

If we cut $R$ along disjoint representatives of $\pi_z(u)$ and $\pi(v)$ then we either obtain a surface of genus $k$ and $l$ punctures or we obtain one or two surfaces of genus less than $k$, depending on whether $\pi_z(u)$ and $\pi_z(v)$ are linked or unlinked. The claim follows similarly to the proof of the first statement.

For the final statement we note that two non-separating arcs form a handle pair if and only if they are linked. This completes the proof.
\end{proof}

\subsection*{Genus sharing pairs} We say that two $(k,l)$-vertices form a $(k,l)$-\emph{genus sharing pair} if they correspond to curves with geometric intersection number two and, of the four surfaces obtained by cutting $\Sigma$ along the curves, one is homeomorphic to $\Sigma_{k-1,l}^1$ and two are homeomorphic to $\Sigma_{1,0}^1$.
\begin{figure}[t]
\centering
\labellist \hair 1pt
	\pinlabel {$k-1$} at 240 700
	\pinlabel {$l$} at 200 430
	\pinlabel {$\bu$} at 600 720
	\pinlabel {$\bv$} at 980 300
	\pinlabel {$\bz$} at 1240 790
    \endlabellist
\includegraphics[scale=0.13]{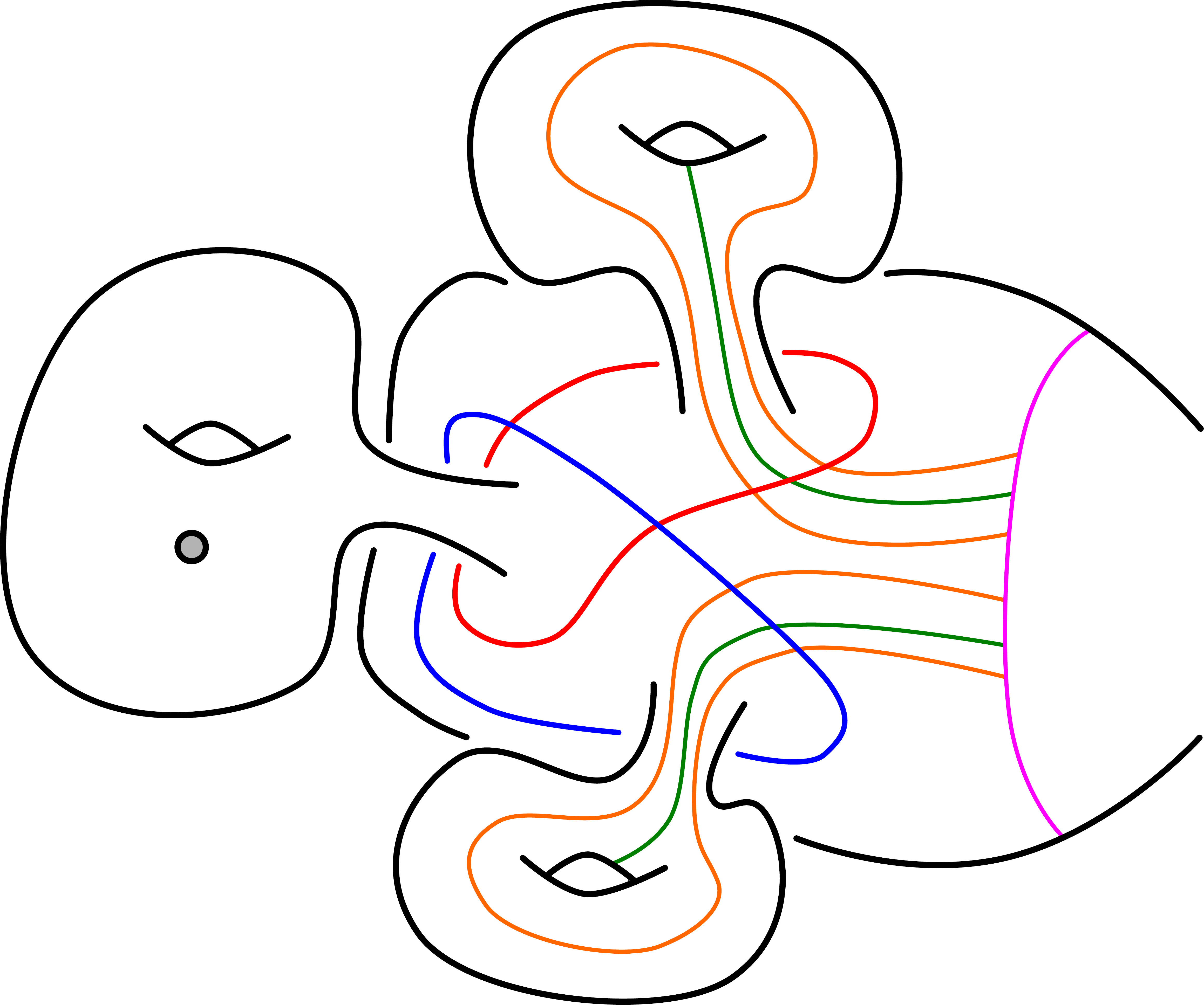}
\caption{A $(k,l)$-genus sharing pair $u,v$ corresponds to the red and blue curves. The vertex $z$ corresponds to light blue curve. The arcs $\pi_z(x_1)$,$\pi_z(y_1)$,$\pi_z(x_2)$ and $\pi_z(y_2)$ are shown in green and orange.}
\label{GSP}
\end{figure}

If two vertices that form a genus sharing pair correspond to the curves $\ba,\bb$ we say that $\ba,\bb$ \emph{share} the $(k-1,l)$-curve $\bc$, where $\bc$ is isotopic to the boundary curve of the region homeomorphic to $\Sigma_{k-1,l}^1$.

\begin{lem}\label{genus_sp}
Let $\vC_X(\Sigma)$ be a linear subcomplex of $\vC_\vS(\Sigma)$ where every minimal vertex is small.  Suppose $\vC_X(\Sigma)$ contains $(k_1,l_1)$- and $(k_2,l_2)$-vertices.  Let vertices $u,v \in \vC_X(\Sigma)$ form a $(k,l)$-genus sharing pair.  If $g \ge k + k_1 + k_2 + 1$ and $n \ge l + l_1 + l_2$ then $\phi(u),\phi(v)$ form a $(k,l)$-genus sharing pair for all $\phi \in \Aut \vC_X(\Sigma)$.
\end{lem}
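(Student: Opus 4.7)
The plan is to reduce the question to invariants already shown to be preserved by automorphisms of $\vC_X(\Sigma)$: vertex type (Proposition \ref{vertex_prop}) and the handle-pair relation under projection onto a well-chosen vertex (Lemma \ref{TripleThreat}). Since $u$ and $v$ are $(k,l)$-vertices, Proposition \ref{vertex_prop} immediately yields that $\phi(u)$ and $\phi(v)$ are also $(k,l)$-vertices, so what remains is to show that the ``shared $(k-1,l)$-curve'' combinatorial data is preserved.

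First I would establish an intrinsic characterisation: two $(k,l)$-vertices $u,v$ of $\vC_X(\Sigma)$ form a $(k,l)$-genus sharing pair if and only if there exists a $(k+1,l)$-vertex $z \in \vC_X(\Sigma)$, not lying in $\Lk(u) \cup \Lk(v)$, such that $\pi_z(u)$ and $\pi_z(v)$ are a handle pair in $R_z$. For the forward direction one takes $\bz$ to be the boundary of a regular neighbourhood of the union of the shared $\Sigma_{k-1,l}^1$-region and the two one-holed tori enveloping $u$ and $v$; the complement in $\Sigma$ then has genus $g-k-1$ and $n-l$ punctures, so the hypotheses $g \ge k+k_1+k_2+1$ and $n \ge l+l_1+l_2$ leave room for disjoint $(k_1,l_1)$- and $(k_2,l_2)$-vertices on the far side, ensuring that $z$ really lies in $\vC_X(\Sigma)$. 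One then checks directly that $\bu \cap R_z$ and $\bv \cap R_z$ are distinct non-separating arcs lying on a $\Sigma_{1,0}^2 \subset R_z$.

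The main obstacle will be the reverse direction: given only handle-pair projection data inside $R_z$, one must rule out alternative completions of $\bu, \bv$ outside $R_z$ that could yield the same projections but a different intersection pattern or the wrong topological types among the four pieces cut out by $\bu \cup \bv$. This will follow from the observations that the complementary region $\Sigma \setminus R_z$ is a single connected $\Sigma_{g-k-1,n-l}^1$ and that $\bu, \bv$ are constrained to be $(k,l)$-curves: cutting $R_z$ along a handle pair produces exactly a $\Sigma_{k-1,l}^1$ piece together with two $\Sigma_{1,0}^1$ pieces, and the only way the two arcs can close up in the complement consistent with the vertex type of $u$ and $v$ is the genus sharing configuration up to homeomorphism of $\Sigma$.

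Once the characterisation is in hand, the conclusion is immediate: applying Proposition \ref{vertex_prop} to $z$ and Lemma \ref{TripleThreat}(3) to the handle pair shows that $\phi(z)$ is a $(k+1,l)$-vertex in $\vC_X(\Sigma)$ for which $\pi_{\phi(z)}(\phi(u))$ and $\pi_{\phi(z)}(\phi(v))$ form a handle pair, so $\phi(u), \phi(v)$ satisfy the characterisation and hence form a $(k,l)$-genus sharing pair. The overall shape of the argument mirrors the closed-surface template of Brendle--Margalit \cite[Lemma 3.3]{BM17}, with the extra ingredient being the puncture bookkeeping encoded in the $(k,l)$ vertex type.
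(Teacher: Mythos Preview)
Your proposed characterisation does not work, and the gap is already visible in the forward direction. The curve $\bz$ you describe---the boundary of a regular neighbourhood of the shared $\Sigma_{k-1,l}^1$ together with the two one-holed tori---is by construction \emph{disjoint} from both $\bu$ and $\bv$, since that neighbourhood contains $\bu\cup\bv$ entirely. Hence $z\in\Lk(u)\cap\Lk(v)$ and $\pi_z(u)=\pi_z(v)=\emptyset$; there are no arcs at all, let alone a handle pair. So the witness you produce contradicts your own hypothesis that $z\notin\Lk(u)\cup\Lk(v)$.

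The reverse direction is where the real obstruction lies. Suppose you did find some other $(k+1,l)$-vertex $z$ that $\bu$ and $\bv$ genuinely cross, with $\pi_z(u),\pi_z(v)$ a handle pair in $R_z$. A handle pair consists of two \emph{disjoint} linked arcs, so this says nothing about the intersection of $\bu$ and $\bv$ inside $R_z$; and it imposes no constraint whatsoever on how the arcs close up in the complementary $\Sigma_{g-k-1,n-l}^1$. One can arrange $(k,l)$-curves with handle-pair projections that are disjoint, or that intersect many times, so the condition is far from characterising intersection number two with the prescribed complementary pieces. Your claim that ``cutting $R_z$ along a handle pair produces exactly a $\Sigma_{k-1,l}^1$ piece together with two $\Sigma_{1,0}^1$ pieces'' conflates cutting along arcs with cutting along the full curves $\bu,\bv$.

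The paper's characterisation is necessarily more elaborate. It places $u$ and $v$ on the \emph{small side} of $z$ (so they are adjacent to $z$, with empty projection) and instead introduces four auxiliary vertices $x_1,y_1,x_2,y_2$ whose projections form \emph{two} handle pairs in $R_z$, together with an unlinked condition between the two pairs. Each handle pair fills a $\Sigma_{1,0}^2\subset R_z$, and the adjacency requirements (condition (2): $x_1,y_1$ adjacent to $u$ but not $v$; condition (3): symmetrically for $x_2,y_2$) force $\bu$ and $\bv$ to be the interior boundary components of these two tori. The unlinked condition then controls how the two $\Sigma_{1,0}^2$'s overlap, yielding intersection number two. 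This indirect mechanism---pinning down $\bu$ and $\bv$ via what is adjacent to them rather than via their own projections---is what makes the reverse implication go through, and it is the piece your proposal is missing.
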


\begin{proof}
We will show that two vertices $u,v$ form a $(k,l)$-genus sharing pair if and only if there are two $(k_1,l_1)$-vertices $x_1$ and $y_1$, two $(k_2,l_2)$-vertices $x_2$ and $y_2$, and a $(k+1,l)$-vertex $z$ that satisfy the following properties.
\begin{enumerate}
\item Both $u$ and $v$ lie on the small side of $z$;
\item both $x_1$ and $y_1$ are adjacent to $u$, $x_2$ and $y_2$, but not $v$;
\item both $x_2$ and $y_2$ are adjacent to $v$, $x_1$ and $y_1$ but not $u$;
\item both pairs $\pi_z(x_1),\pi_z(y_1)$ and $\pi_z(x_2),\pi_z(y_2)$ are distinct handle pairs; and
\item if $\alpha_1 \in \{ \pi_z(x_1), \pi_z(y_1) \}$ and $\alpha_2 \in \{ \pi_z(x_2), \pi_z(y_2) \}$ then $\alpha_1$ and $\alpha_2$ are unlinked.
\end{enumerate}
The result then follows from Lemmas \ref{vertex_prop} and \ref{TripleThreat}.

Suppose the vertices $u,v$ form a $(k,l)$-genus sharing pair and correspond to the curves $\bu,\bv$. Up to homeomorphism there is a unique confuguration for the curves $\bu,\bv$ shown in Figure \ref{GSP}.  The curves $\bu$ and $\bv$ separate $\Sigma$ into four regions which are homeomorphic to $\Sigma_{k-1,l}^1$, $\Sigma_{1,0}^1$, $\Sigma_{1,0}^1$ and $\Sigma_{g-k-1,n-l}^1$. Take $R$ to be the complement of this final region in $\Sigma$ and let $z$ be the vertex corresponding to $\partial R$. We then define $x_1, y_1, x_2,$ and $y_2$ to be $(k_1, l_1)$- and $(k_2,l_2)$-vertices corresponding to the projected arcs shown in Figure \ref{GSP}. The chosen vertices satisfy the five conditions above.

Now suppose we have vertices $u,v,x_1,y_1,x_2,y_2,$ and $z$ satisfying the above conditions.  Note that conditions (1), (2), and (3) are met whenever $g$ and $n$ are as in the statement of the lemma.  By the fourth condition the arcs $\pi_z(x_1)$ and $\pi_z(y_1)$ are contained in some region $Q_1 \cong \Sigma_{1,0}^2$. Denote the two boundary components of $Q_1$ by $\bz$ and $\bu$. The vertex $z$ must correspond to the curve $\bz$, and the arcs $\pi_z (x_1)$ and $\pi_z (y_1)$ have endpoints on $\bz$. We want to show that the vertex $u$ corresponds to $\bu$.

The surface obtained by cutting along $Q_1$ by $\pi_z(x_1)$ is homeomorphic to a pair of pants $P$. If we then cut $P$ along $\pi_z(y_1)$ the resulting surface is an annulus. It follows that $\pi_z(x_1)$ and $\pi_z(y_1)$ fill $Q_1$. From the second condition we have that $u$ corresponds to a curve that is disjoint from $Q_x$. Since $Q_x$ is of genus one it must be that $u$ corresponds to $\bu$. By symmetry, the vertex $v$ corresponds to $\bv$ the boundary component not isotopic to $\partial R$ of the equivalent region $Q_2$.
\begin{figure}[t]
\centering
\labellist \hair 1pt
	\pinlabel {$\pi_z(x_1)$} at 0 200
	\pinlabel {$\pi_z(y_1)$} at -50 145
	\pinlabel {$\pi_z(x_1)$} at -50 60
	\pinlabel {$\pi_z(y_1)$} at 0 0
	\pinlabel {$\pi_z(x_2)$} at 210 200
	\pinlabel {$\pi_z(y_2)$} at 260 145
	\pinlabel {$\pi_z(x_2)$} at 260 60
	\pinlabel {$\pi_z(y_2)$} at 210 0
    \endlabellist
\includegraphics[scale=0.3]{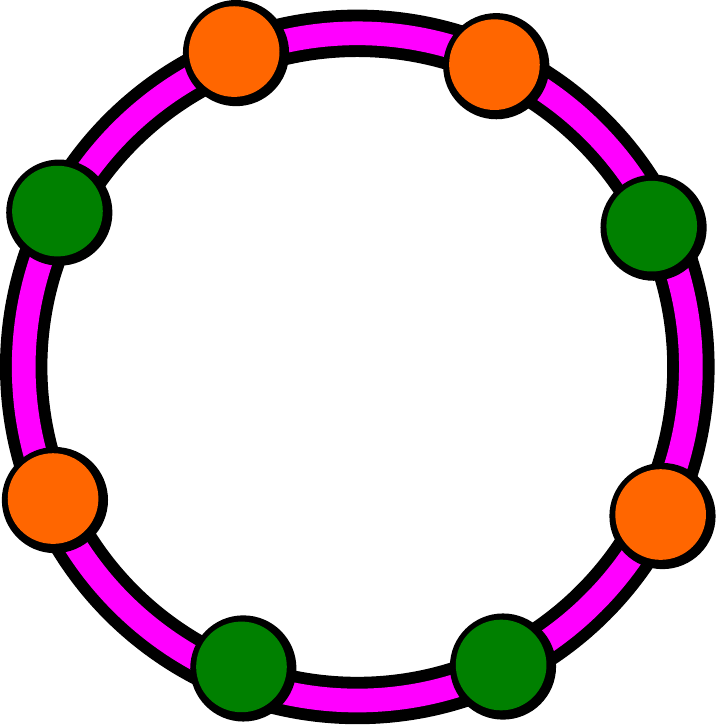}
\caption{The curve $\bz$ intersects $\pi_z(x_1)$,$\pi_z(y_1)$,$\pi_z(x_2)$ and $\pi_z(y_2)$ as shown.}
\label{GR}
\end{figure}

From the fifth condition, we can view $\bz$ as the circle in Figure \ref{GR}. There exist segments $\gamma_1$ and $\gamma_2$ of $\bz$, with $\gamma_x \cup \gamma_y = \bz$, such that the arcs $\pi_z(x_1)$ and $\pi_z(y_1)$ have endpoints in $\gamma_1$ and the arcs $\pi_z(x_2)$ and $\pi_z(y_2)$ have endpoints in $\gamma_2$.

It follows that the intersection of $\pi_z(x_2)$ and $\pi_z(y_2)$ with $Q_1$ is a set of four freely isotopic arcs. Since $Q_2$ is a regular neighbourhood of the arcs $\pi_z(x_2)$ and $\pi_z(y_2)$ we have that the intersection of $Q_1$ and $Q_2$ is an annulus whose boundary components are isotopic to $\bz$.  The curves $\bu$ and $\bv$ must therefore have essential intersection two.

If two separating simple closed curves intersect in two points then they divide $\Sigma$ into four regions, one of which must contain $\bz$. It follows that one of these regions is of genus $k-1$ and has $l$ punctures. Thus, $u,v$ form a genus sharing pair.
\end{proof}

\subsection*{Puncture sharing pairs} We say that two $(k,l)$-vertices form a $(k,l)$-\emph{puncture sharing pair} if they correspond to curves with geometric intersection number two and, of the four surfaces obtained by cutting $\Sigma$ along the curves, one is homeomorphic to $\Sigma_{k,l-1}^1$ and two are homeomorphic to $\Sigma_{0,1}^1$.

If two vertices that form a puncture sharing pair correspond to the curves $\ba,\bb$ we say that $\ba,\bb$ \emph{share} the curve $\bc$, where $\bc$ is isotopic to the boundary curve of the region homeomorphic to $\Sigma_{k,l-1}^1$.

\begin{lem}\label{puncture_sp}
Let $\vC_X(\Sigma)$ be a linear subcomplex of $\vC_\vS(\Sigma)$ where every minimal vertex is small.  Suppose $\vC_X(\Sigma)$ contains $(k_1,l_1)$- and $(k_2,l_2)$-vertices.  Let vertices $u,v \in \vC_X(\Sigma)$ form a $(k,l)$-puncture sharing pair and let $g \ge k + k_1 + k_2$. If $n \ge l + l_1 + l_2 + 1$, or $l_1,l_2 > 0$ and $n \ge l + l_1 + l_2 - 1$ then $\phi(u),\phi(v)$ form a $(k,l)$-puncture sharing pair for all $\phi \in \Aut \vC_X(\Sigma)$.
\end{lem}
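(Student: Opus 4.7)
The plan is to mirror the proof of Lemma~\ref{genus_sp}, interchanging the roles of genus and punctures throughout. First I would introduce a \emph{puncture pair}: given a vertex $z$ corresponding to $\partial R$, two arcs $\alpha,\beta\subset R$ form a puncture pair if there is a subsurface $Q\subset R$ homeomorphic to $\Sigma_{0,1}^{2}$ whose relative boundary in $R$ consists of $\alpha$, $\beta$, and two subsegments of $\partial R$. I would then prove the puncture analogue of Lemma~\ref{TripleThreat}, in which the distinguished vertex $z$ is now a $(k,l+1)$-vertex (rather than a $(k+1,l)$-vertex) and ``non-separating arc'' is replaced by ``arc cutting a $\Sigma_{0,1}^{1}$ off of $R$''. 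The arguments carry over line by line, with the counting of $(k,l)$-vertices adjacent to a given vertex on the small side of $z$ now controlled by punctures instead of genus.

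With that in hand, the next step is to characterise a $(k,l)$-puncture sharing pair $u,v$ by the existence of auxiliary vertices $z,\ x_1,y_1,\ x_2,y_2$ where $z$ is a $(k,l+1)$-vertex bounding the region $R\cong\Sigma_{k,l+1}^{1}$ consisting of the shared $\Sigma_{k,l-1}^{1}$ together with the two $\Sigma_{0,1}^{1}$ ears, $x_1,y_1$ are two $(k_1,l_1)$-vertices, and $x_2,y_2$ are two $(k_2,l_2)$-vertices, satisfying the direct analogues of the five conditions in the proof of Lemma~\ref{genus_sp}: $u$ and $v$ lie on the small side of $z$; $x_1,y_1$ are adjacent to $u,x_2,y_2$ but not $v$ and symmetrically for $x_2,y_2$; $(\pi_z(x_1),\pi_z(y_1))$ and $(\pi_z(x_2),\pi_z(y_2))$ are distinct puncture pairs in $R$; and arcs from different pairs are unlinked. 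Each property is automorphism-invariant by Proposition~\ref{vertex_prop} and the puncture analogue of Lemma~\ref{TripleThreat}, so it suffices to prove this characterisation.

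The forward direction is where the two parameter regimes diverge. In the regime $n\ge l+l_1+l_2+1$ the complement $\Sigma\setminus R\cong\Sigma_{g-k,n-l-1}^{1}$ has enough room to host the small sides of all four auxiliary vertices entirely outside $R$; each $x_i,y_i$ then projects into the appropriate ear along an arc that together with its partner forms a puncture pair. In the regime $l_1,l_2>0$ and $n\ge l+l_1+l_2-1$ the small side of each $x_i$ and $y_i$ is instead arranged to absorb one of the two ear punctures, saving two punctures from the overall budget of $\Sigma$; this absorption is only available when $l_i>0$, which explains the additional hypothesis. The reverse direction then proceeds as at the end of the proof of Lemma~\ref{genus_sp}: the puncture pair condition forces $\pi_z(x_i),\pi_z(y_i)$ to fill a punctured-disc ear, pinning down $\bu$ and $\bv$ as its remaining boundary components, and the unlinkedness condition forces $\bu,\bv$ to have essential intersection two and to share a $(k,l-1)$-curve.

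I expect the main obstacle to be the explicit construction of the auxiliary vertices in the second parameter regime. Since the ears are $\Sigma_{0,1}^{1}$ rather than $\Sigma_{1,0}^{1}$, there is less topological room to realise two distinct $(k_i,l_i)$-curves whose projections form a puncture pair while satisfying all of the required adjacency constraints; this is a combinatorial verification rather than a deep topological subtlety, but it requires careful bookkeeping of how the two ear punctures are apportioned between the small sides of $x_i$ and $y_i$.
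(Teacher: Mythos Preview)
Your plan is more elaborate than the paper's, and the extra machinery is where the risk lies. The paper does \emph{not} introduce a puncture analogue of handle pairs or of Lemma~\ref{TripleThreat}. Instead it characterises a $(k,l)$-puncture sharing pair using only \emph{three} auxiliary vertices: a $(k,l+1)$-vertex $z$, a single $(k_1,l_1)$-vertex $x_1$, and a single $(k_2,l_2)$-vertex $x_2$, subject only to the conditions that $u,v$ lie on the small side of $z$, that $x_1$ is adjacent to $u$ and $x_2$ but not $v$, and symmetrically for $x_2$. Invariance then follows from Proposition~\ref{vertex_prop} alone. The reason one vertex per side suffices is topological: once $u$ is a $(k,l)$-vertex on the small side of $z$, the region between $\bu$ and $\bz$ is already a punctured annulus $\Sigma_{0,1}^2$, and \emph{any} essential arc in it with endpoints on $\bz$ cuts off the puncture and leaves an annulus whose core is $\bu$. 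So one arc pins down $\bu$; there is no need for a pair of arcs to fill, as there was for the genus-one ear $\Sigma_{1,0}^2$.

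Your proposal may be salvageable, but the sentence ``the arguments carry over line by line'' is where it would break. In Lemma~\ref{TripleThreat} the relevant arcs are \emph{non-separating} in $R$, and the characterisation hinges on counting $(k,l)$-vertices in a genus-$k$ complement; in your setting the arcs cutting off a $\Sigma_{0,1}^1$ are \emph{separating}, and the counting behaves differently (a single such arc already leaves a unique $(k,l)$-curve, not infinitely many). So the three parts of Lemma~\ref{TripleThreat} do not translate verbatim, and you would need to formulate and prove genuinely new statements about separating arcs, linked versus unlinked separating arcs, and your ``puncture pair'' condition. That is doable but is real work, and it is exactly the work the paper avoids by exploiting the simpler topology of the punctured-annulus ear. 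If you want to follow the paper's route, drop $y_1,y_2$ and the puncture-pair / TripleThreat analogue entirely, and check directly that the three adjacency conditions above force the configuration of Figure~\ref{PSP}.
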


\begin{proof}
We will show that two vertices $u,v$ form a $(k,l)$-puncture sharing pair if and only if there is a $(k_1,l_1)$-vertex $x_1$, a $(k_2,l_2)$-vertex $x_2$, and a $(k,l+1)$-vertex $z$ that satisfy the following properties.
\begin{enumerate}
\item Both $u$ and $v$ lie on the small side of $z$;
\item the vertex $x_1$ is adjacent to $u$ and $x_2$ but not $v$; and
\item the vertex $x_2$ is adjacent to $v$ and $x_1$ but not $u$.
\end{enumerate}
The result then follows from Lemma \ref{vertex_prop}.

Suppose the vertices $u,v$ form a $(k,l)$-puncture sharing pair and correspond to the curves $\bu,\bv$. Up to homeomorphism there is a unique configuration for the curves $\bu,\bv$ shown in Figure \ref{PSP}.
\begin{figure}[t]
\centering
\labellist \hair 1pt
	\pinlabel {$k$} at 240 580
	\pinlabel {$l-1$} at 210 320
	\pinlabel {$\bu$} at 580 680
	\pinlabel {$\bv$} at 580 80
	\pinlabel {$\bz$} at 980 740
    \endlabellist
\includegraphics[scale=0.13]{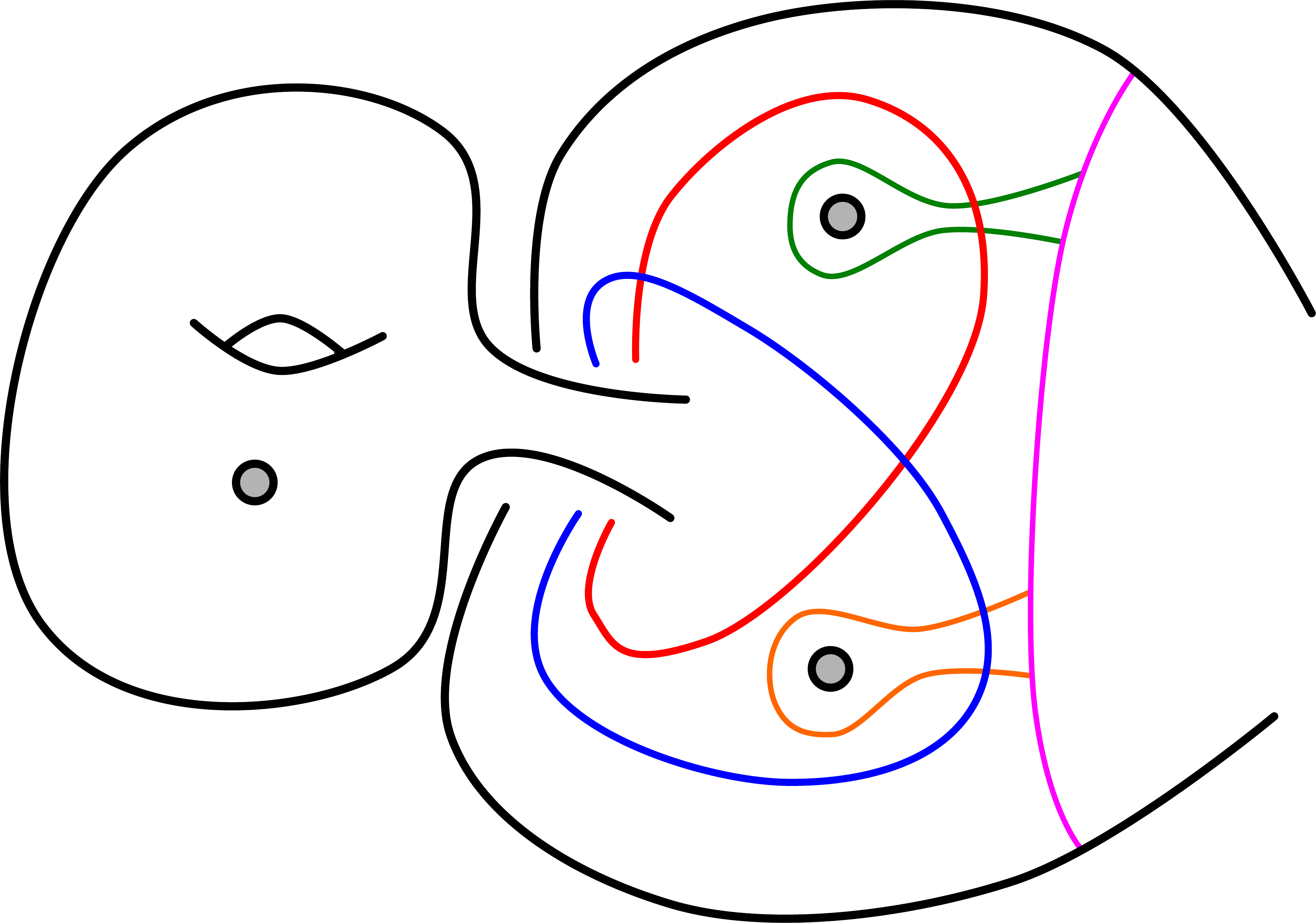}
\caption{A $(k,l)$-puncture sharing pair $u,v$ corresponds to the red and blue curves. The vertex $z$ corresponds to light blue curve. The arcs $\pi_z(x_1)$ and $\pi_z(x_2)$ are shown in green and orange.}
\label{PSP}
\end{figure}
The curves $\bu$ and $\bv$ separate $\Sigma$ into four regions which are homeomorphic to $\Sigma_{k,l-1}^1$, $\Sigma_{0,1}^1$, $\Sigma_{0,1}^1$ and $\Sigma_{g-k,n-l-1}^1$. Take $R$ to be the complement of this final region in $\Sigma$ and let $z$ be the vertex corresponding to $\partial R$. We then define $x_1$ to be the $(k_1,l_1)$-vertex and $x_2$ to be the $(k_2,l_2)$-vertex corresponding to the projected arcs shown in Figure \ref{PSP}. The chosen vertices satisfy the five conditions above.

Now suppose we have vertices $u,v,x,y$ and $z$ satisfying the above conditions.  Note that the three conditions can always be met when $g$ and $n$ satisfy the bounds given in the statement of the lemma.  By the first and second conditions the arc $\pi_z(x_1)$ is contained in some $Q_1$ homeomorphic to an annulus with a single puncture. Denote the two boundary components of $Q_1$ by $\bz$ and $\bu$. The vertex $z$ must correspond to $\bz$ and the arc $\pi_z (x_1)$ has endpoints on $\bz$. We want to show that the vertex $u$ corresponds to $\bu$.

When we cut $Q_1$ along the arc $\pi_z(x_1)$ we get two surfaces; an annulus and a disc with one puncture. The boundary of this annulus is isotopic to $\bu$, a $(k,l)$-curve that is contained in the associated region of $\bz$ with genus $k$.  It follows that $u$ corresponds to $\bu$. By symmetry, the vertex $v$ must correspond to $\bv$, the boundary component not isotopic to $\bz$ of the equivalent region $Q_2$.

From the fourth condition the curve $\bz$ takes the form of the circle in Figure \ref{puncture_bound}(i).  There exist segments $\gamma_1$ and $\gamma_2$ of $\bz$, with $\gamma_1 \cup \gamma_2 = \bz$, such that the arcs $\pi_z (x_1)$ and $\pi_z (x_2)$ have endpoints in $\gamma_1$ and $\gamma_2$ respectively.
\begin{figure}[h]
\centering
\labellist \hair 1pt
	\pinlabel {(i)} at 110 0
	\pinlabel {(ii)} at 680 0
	\pinlabel {$\pi_z(x_1)$} at -30 220
	\pinlabel {$\pi_z(x_1)$} at -30 70
	\pinlabel {$\pi_z(x_2)$} at 240 220
	\pinlabel {$\pi_z(x_2)$} at 240 70
	\pinlabel {$\bv$} at 555 290
	\pinlabel {$\bu$} at 520 180
	\pinlabel {$\pi_z(y)$} at 705 145
	\pinlabel {$\pi_z(x)$} at 850 200
	\pinlabel {$\pi_z(x)$} at 470 75
	\pinlabel {$\bz$} at 660 60
    \endlabellist
\includegraphics[scale=0.32]{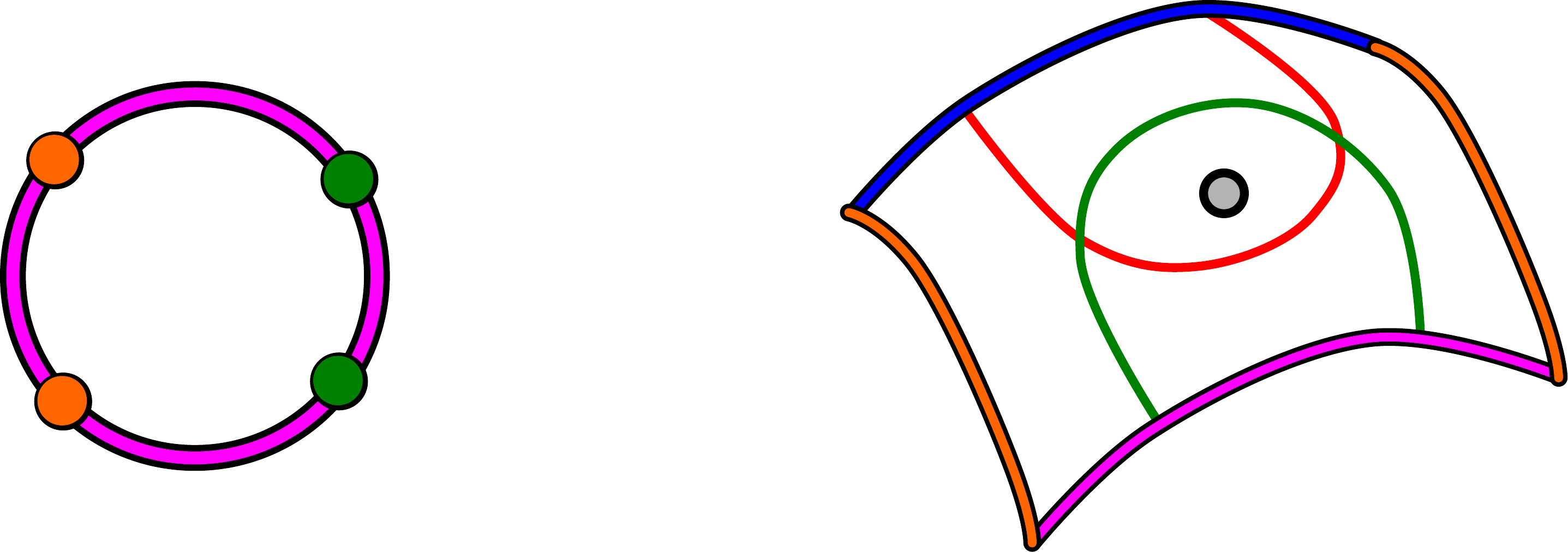}
\caption{The curve $\bz$ intersects $\pi_z(x_1)$ and $\pi_z(x_2)$}
\label{puncture_bound}
\end{figure}

It follows that the intersection of the arc representing $\pi_z (x_2)$ with $Q_1$ is a set of two freely isotopic arcs. If we cut along one of these arcs then, since $\bu$ and $\bv$ must intersect, they take the form shown in Figure \ref{puncture_bound}(ii) where they intersect exactly twice.  If two separating simple closed curves intersect in two points then they divide $\Sigma$ into four regions, one of which must contain $\bz$.  It follows that one of these regions is of genus $k$ and has $l-1$ punctures. Thus, $u,v$ form a genus sharing pair.
\end{proof}

Note that the bounds on $g$ and $n$ found in Lemma \ref{genus_sp} form part of the inequalities $(5)$ and $(6')$ defining small vertices from Section \ref{vertex_types}. Similarly the bounds on $g$ and $n$ from Lemma \ref{puncture_sp} are found in the inequalities $(5')$ and $(6)$. The requirement in Theorem \ref{etas} that all minimal vertices are small therefore allows us to apply Lemmas \ref{genus_sp} and \ref{puncture_sp} to minimal vertices of $\vC_{\vS(A)}(\Sigma)$.

Let $\vC_X(\Sigma)$ and $\vC_Y(\Sigma)$ be linear subcomplexes of $\vC_\vS(\Sigma)$ such that $\vC_Y(\Sigma)$ is a subcomplex of $\vC_X(\Sigma)$.  We will now use the two types of sharing pairs to extend an automorphisms of $\vC_Y(\Sigma)$ to automorphisms of $\vC_X(\Sigma)$.  We do this by introducing \emph{graphs of sharing pairs} and showing that it consists of infinitely many connected components, each corresponding to a unique isotopy class of curves.

If $u_1,v_1$ and $u_2,v_2$ are $(k,l)$-genus sharing pairs that correspond to curves that share the same $(k-1,l)$-curve then we say that $u_1,v_1$ and $u_2,v_2$ are \emph{similar}.  In the same way, we may define \emph{similar} $(k,l)$-puncture sharing pairs to be those that correspond to pairs of curves sharing the same $(k,l-1)$-curve.

\subsection*{Graphs of sharing pairs}
Given a linear subcomplex $\vC_X(\Sigma)$ of $\vC_\vS(\Sigma)$ we construct a graph $\vSP$ with vertices corresponding to all $(k,l)$-sharing pairs of the same type.  Two vertices share an edge in $\vSP$ if they correspond to sharing pairs $u,v$ and $v,w$, such that $u,w$ is also a sharing pair and all three pairs are similar.  Note that this definition holds for both genus sharing pairs and puncture sharing pairs.

From Proposition \ref{vertex_prop}, Lemma \ref{genus_sp}, and Lemma \ref{puncture_sp} one can show that if $u,v$ and $v,w$ share an edge in $\vSP$ then $\phi(u), \phi(v)$ and $\phi(v), \phi(w)$ share an edge, for all $\phi \in \Aut \vC_X(\Sigma)$

It is clear that if two sharing pairs are connected in $\vSP$ then they are similar sharing pairs. This implies that the graph $\vSP$ is made up of various disconnected components.  We will write $\vSP(\bc)$ for the components relating to sharing pairs that correspond to pairs of curves that share the same curve $\bc$.

We now show that $\vSP(\bc)$ is a single connected component of $\vSP$.  We will once again make use of Lemma \ref{Put}.

\begin{lem}\label{ShTr}
Suppose $\mathcal{SP}$ corresponds to $(k,l)$-genus sharing pairs and $g \ge k+3$, or $\mathcal{SP}$ corresponds to $(k,l)$-puncture sharing pairs and $n \ge l+2$.  Then the subgraph $\mathcal{SP}(\bc)$ is a single connected component of $\mathcal{SP}$ for any curve $\bc$.
\end{lem}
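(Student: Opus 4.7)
The plan is to apply Putman's Lemma (Lemma \ref{Put}) to the stabiliser $G := \Stab_{\Mode(\Sigma)}(\bc)$ acting on $\mathcal{SP}(\bc)$ by $f \cdot \{u,v\} = \{f(u),f(v)\}$. This action preserves $\mathcal{SP}(\bc)$ because $f$ fixes the isotopy class of $\bc$ and hence sends sharing pairs sharing $\bc$ to sharing pairs sharing $\bc$; it also carries adjacent vertices to adjacent vertices, since the three-pairs-similar condition is topological.

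I would verify hypothesis (1) of Lemma \ref{Put} via the change of coordinates principle. Any two $(k,l)$-sharing pairs of the same type sharing $\bc$ give homeomorphic configurations in $\Sigma$ (two curves of geometric intersection number two, cutting $\Sigma$ into the same prescribed pattern of pieces, together with the shared curve $\bc$), so there is a homeomorphism of $\Sigma$ preserving $\bc$ carrying any such pair to any other. Hence $G$ acts transitively on the vertex set of $\mathcal{SP}(\bc)$, and every connected component of $\mathcal{SP}(\bc)$ meets the $G$-orbit of any fixed basepoint $v_0$.

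For hypothesis (2) I would fix $v_0 = \{\bu_0,\bv_0\}$ in a convenient standard position and choose a finite generating set $S$ of $G$ consisting of Dehn twists and half-twists about curves in $\Sigma$ lying on one or the other side of $\bc$. Generators supported in the $\Sigma_{k-1,l}^1$ side of $\bc$ fix both $\bu_0$ and $\bv_0$ and hence fix $v_0$, as do those supported in the complementary region $R$ but disjoint from $\bu_0\cup\bv_0$. For the remaining generators $s$, which move $v_0$ in a localised way, I would exhibit a short explicit path from $v_0$ to $s\cdot v_0$ in $\mathcal{SP}(\bc)$ by producing one or two auxiliary sharing pairs sharing $\bc$. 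Recalling that an edge of $\mathcal{SP}$ between $\{a,b\}$ and $\{b,c\}$ demands $\{a,c\}$ to be a similar sharing pair as well, each intermediate step amounts to exhibiting a $(k,l)$-curve $\bu'$ forming a sharing pair sharing $\bc$ with each of several specified $(k,l)$-curves drawn from $\bu_0,\bv_0,s(\bu_0),s(\bv_0)$.

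The main obstacle is the explicit construction of the intermediate curves $\bu'$. Up to isotopy this is the question of whether the $R$-side of $\bc$ admits a genus-one handle (respectively a once-punctured disc) disjoint from the union of the handles (respectively punctured discs) cut off by $\bu_0,\bv_0,s(\bu_0),s(\bv_0)$. A straightforward Euler-characteristic count shows that the hypothesis $g \ge k+3$ in the genus case (respectively $n \ge l+2$ in the puncture case) leaves enough residual genus (respectively punctures) in $R$ to always embed such a handle (respectively disc), possibly after passing to a two-step path through two auxiliary vertices rather than one. Once these local moves are in hand, Lemma \ref{Put} applies directly and $\mathcal{SP}(\bc)$ is connected. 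Since any edge of $\mathcal{SP}$ joins similar sharing pairs, $\mathcal{SP}(\bc)$ is automatically a union of components of $\mathcal{SP}$, and the proof is complete.
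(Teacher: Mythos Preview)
Your proposal is correct and follows essentially the same route as the paper: apply Putman's Lemma~\ref{Put} to the stabiliser of $\bc$ acting on $\mathcal{SP}(\bc)$, use change of coordinates for transitivity, and handle the finitely many generators that move the basepoint by exhibiting an intermediate curve $\bd$ furnishing a length-two path. The paper works with the slightly smaller group $\Mod(\Sigma,R)$ (mapping classes fixing the small side of $\bc$) rather than the full stabiliser, and is more explicit about the generating set and the curve $\bd$ (see Figures~\ref{genus_putman} and~\ref{puncture_putman}), but the strategy is identical and a single auxiliary vertex suffices in each case.
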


\begin{proof}
Let $\ba,\bb$ be $(k,l)$-curves that share the curve $\bc$. Let $R$ be the associated region of $\bc$ that does not contain $\ba$ or $\bb$. Let $\Mod (\Sigma, R)$ be the subgroup of $\Mod (\Sigma)$ that fixes the subsurface $R$.  Every vertex in $\vSP(\bc)$ corresponds to curves $f(\ba), f(\bb)$, for some $f \in \Mod (\Sigma, R)$. This satisfies the first condition in Lemma \ref{Put} with respect to the simplicial complex $\vSP(\bc)$. It remains to show that the second condition is satisfied. This will be done in two cases; the first case deals with $(k,l)$-genus sharing pairs and the second deals with $(k,l)$-puncture sharing pairs.

Suppose the vertices of $\vSP$ correspond to $(k,l)$-genus sharing pairs. The groups $\Mod(\Sigma, R)$ and $\Mod(\Sigma_{g-k+1,n-l}^1)$ are isomorphic. It follows that there exists a finite generating set $S$ for $\Mod(\Sigma, R)$ consisting of Dehn twists about non-separating curves and half twists about $(0,2)$-curves, see Figure \ref{genus_putman}(i).
\begin{figure}[t]
\centering
\labellist \hair 1pt
	\pinlabel {(i)} at 1000 0
	\pinlabel {(ii)} at 2700 0
	\pinlabel {$k-1$} at 240 700
	\pinlabel {$l$} at 200 430
	\pinlabel {$\ba$} at 1000 730
	\pinlabel {$\bb$} at 990 300
	\pinlabel {$k-1$} at 1820 700
	\pinlabel {$l$} at 1780 430
	\pinlabel {$\bd$} at 2910 660
	\pinlabel {$T(\bb)$} at 2710 280
    \endlabellist
\includegraphics[scale=0.11]{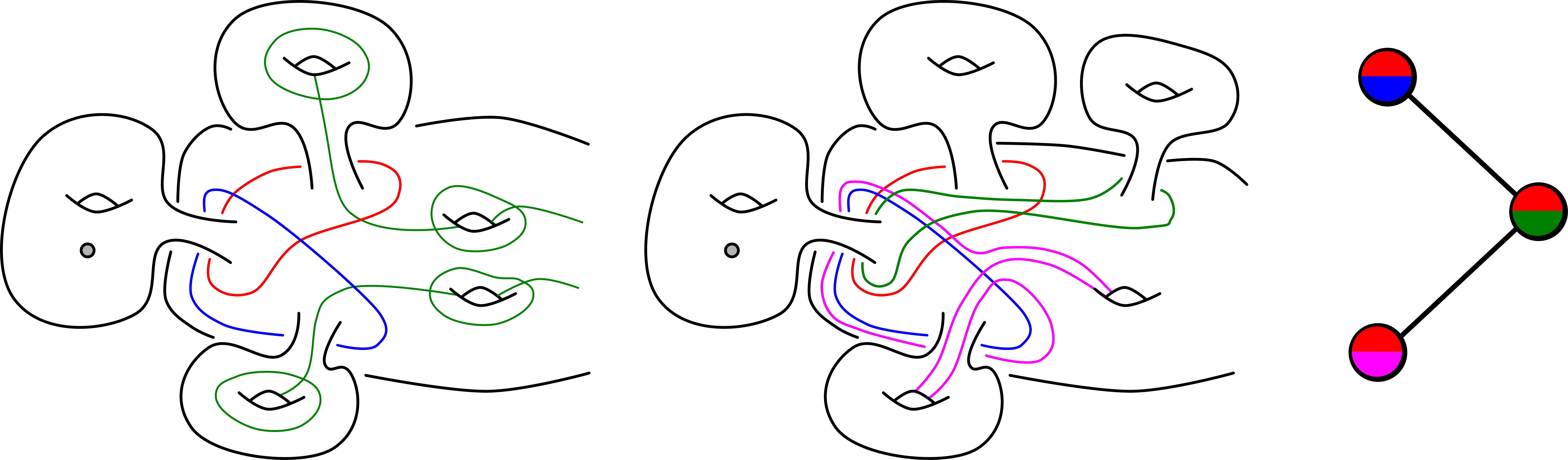}
\caption{(i) Generating twists and half twists with respect to a genus sharing pair corresponding to curves $\ba, \bb$. (ii) The vertex of $\vSP(\bc)$ corresponding to the sharing pair $\ba, \bd$ spans an edge with both $\ba, \bb$ and $\ba, T(\bb)$.}
\label{genus_putman}
\end{figure}

We choose $S$ so that one non-separating curve intersects $\ba$, one non-separating curve intersects $\bb$, and all other curves are disjoint from both $\ba$ and $\bb$.  By symmetry it is enough to consider the single case where $T$ is a Dehn twist about a non-separating curve intersecting $\bb$ and disjoint from $\ba$. We have that $T(\ba) = \ba$ and $\ba,T(\bb)$ share the curve $\bc$.  It remains to show that the vertices corresponding to $\ba,\bb$ and $\ba,T(\bb)$ are connected in $\vSP(\bc)$.

Given $g \ge k +3$ we can find a curve $\bd$ such that that the vertex corresponding to $\ba,\bd$ is adjacent to the vertices corresponding to $\ba,\bb$ and $\ba,T(\bb)$ in $\vSP$, see Figure \ref{genus_putman}(ii).  By Lemma \ref{Put} the result holds $\vSP(\bc)$ when defined with respect to $(k,l)$-genus sharing pairs.

Now suppose the vertices of $\vSP$ correspond to puncture sharing pairs. The groups $\Mod(\Sigma, R)$ and $\Mod(\Sigma_{g-k,n-l+1)}^1)$ are isomorphic.  Once again, we can find a finite generating set $S$ for $\Mod(\Sigma, R)$ consisting of Dehn twists about non-separating curves and half twists about $(0,2)$-curves, see Figure \ref{puncture_putman}(i).
\begin{figure}[t]
\centering
\labellist \hair 1pt
	\pinlabel {(i)} at 870 -40
	\pinlabel {(ii)} at 2550 -40
	\pinlabel {$k$} at 240 550
	\pinlabel {$l-1$} at 200 280
	\pinlabel {$\ba$} at 820 580
	\pinlabel {$\bb$} at 780 80
	\pinlabel {$k$} at 1880 550
	\pinlabel {$l-1$} at 1840 280
	\pinlabel {$H(\bb)$} at 2590 560
	\pinlabel {$\bd$} at 2760 280
    \endlabellist
\includegraphics[scale=0.11]{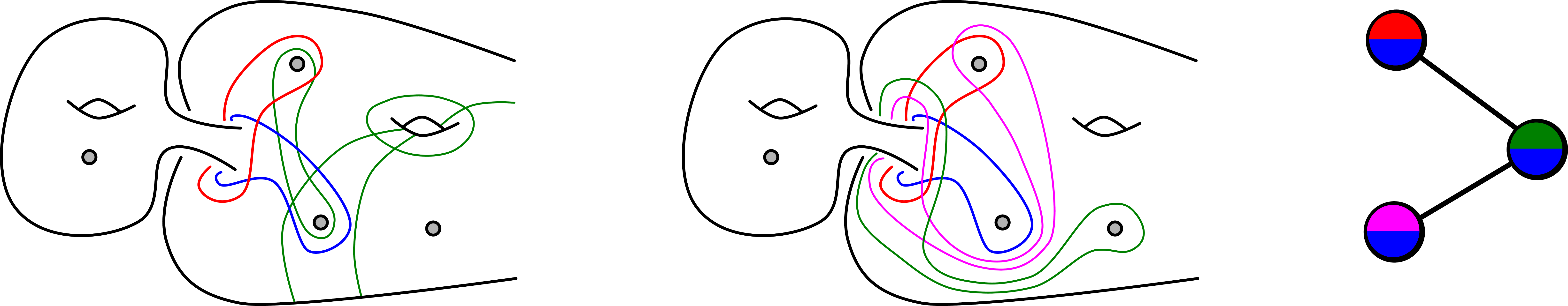}
\caption{(i) Generating twists with respect to a puncture sharing pair corresponding to curves $\ba, \bb$. (ii) The vertex of $\vSP(\bc)$ corresponding to the sharing pair $\bd, \bb$ spans an edge with both $\ba, \bb$ and $H(\bb), \bb$. Note also that $H(\ba) = \bb$.}
\label{puncture_putman}
\end{figure}

Again, we may choose $S$ so that one non-separating curve and one $(0,2)$-curve intersect $\bb$ and one $(0,2)$-curve intersects both $\ba$ and $\bb$, all other curves are disjoint from both $\ba$ and $\bb$.  As before, if $T$ is a Dehn twist about a non-separating curve intersecting $\bb$ and disjoint from $\ba$ then it is clear that $T(\ba),T(\bb)$ share $\bc$. Given $n \ge l +2$ we can find a curve $\bd$ such that the vertex relating to $\ba,\bd$ is adjacent to the vertices relating to $\ba,\bb$ and $T(\ba),T(\bb)$ in $\vSP$. A similar argument follows for the half twist about the $(0,2)$-curve intersecting $\ba$ and not $\bb$.

Finally, for the half twist $H$ about a $(0,2)$-curve intersecting both $\ba$ and $\bb$ it is clear that $H(\ba),H(\bb)$ share $\bc$.  Furthermore, without loss of generality we can assume that $H(\ba) = \bb$. Given $n \ge l+2$ we can find a curve $\bd$ such that the vertex corresponding $\bd,\bb$ is adjacent to both to $\ba,\bb$ and $H(\bb),\bb$, see Figure \ref{puncture_putman}(ii).  By Lemma \ref{Put} the result holds $\vSP(\bc)$ when defined with respect to $(k,l)$-puncture sharing pairs.
\end{proof}

The following proposition will be a key step used repeatedly when proving Theorem \ref{etas}.

\begin{prop}\label{Step}
Let $\vC_X(\Sigma)$ and $\vC_Y(\Sigma)$ be linear subcomplexes of $\vC_\vS(\Sigma)$ such that $\vC_Y(\Sigma)$ is obtained by removing all $(k,l)$-vertices from $\vC_X(\Sigma)$.  Suppose the natural homomorphism
\[
\eta_X : \Mode (\Sigma) \rightarrow \Aut \vC_X(\Sigma)
\]
is an isomorphism.  If automorphisms of $\vC_Y(\Sigma)$ either
\begin{enumerate}
\item preserve $(k+1,l)$-genus sharing pairs and $g \ge (k+1) +3$, or
\item preserve $(k,l+1)$-puncture sharing pairs and $n \ge (l+1) + 2$, 
\end{enumerate}
then the natural homomorphism
\[
\eta_Y : \Mode (\Sigma) \rightarrow \Aut \vC_Y(\Sigma)
\]
is also an isomorphism. 
\end{prop}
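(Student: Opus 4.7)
The plan is to reduce the statement to the isomorphism $\eta_X$ by showing that every $\phi \in \Aut \vC_Y(\Sigma)$ uniquely extends to an automorphism $\widetilde\phi \in \Aut \vC_X(\Sigma)$ whose restriction to $\vC_Y(\Sigma)$ is $\phi$. Because Proposition \ref{vertex_prop} forces any automorphism of $\vC_X(\Sigma)$ to permute $(k,l)$-vertices among themselves, there is a natural restriction homomorphism $r \colon \Aut \vC_X(\Sigma) \to \Aut \vC_Y(\Sigma)$ satisfying $r \circ \eta_X = \eta_Y$; constructing a two-sided inverse for $r$ will prove the proposition.

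To define the extension, let $v$ be a $(k,l)$-vertex of $\vC_X(\Sigma)$ corresponding to a curve $\bc$. The complexity bound $g \ge k+4$ in case $(1)$, or $n \ge l+3$ in case $(2)$, provides a $(k+1,l)$-genus, respectively $(k,l+1)$-puncture, sharing pair $u_1, u_2 \in \vC_Y(\Sigma)$ whose shared curve is $\bc$. By the hypothesis on $\phi$, the image pair $\phi(u_1), \phi(u_2)$ is a sharing pair of the same type; letting $\bc'$ denote its shared curve, I set $\widetilde\phi(v)$ to be the vertex corresponding to $\bc'$ and leave $\widetilde\phi$ equal to $\phi$ on all other vertices. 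Well-definedness follows from Lemma \ref{ShTr}: the set of sharing pairs sharing $\bc$ is the single connected component $\vSP(\bc)$ of $\vSP$, and $\phi$ must send it into a single component $\vSP(\bc')$, so $\bc'$ is independent of the choice.

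The main technical step is to verify that $\widetilde\phi$ respects adjacency in $\vC_X(\Sigma)$. It suffices to check edges incident to a $(k,l)$-vertex $v$ corresponding to $\bc$; given such an edge to a vertex $w$ with curve $\bd$ disjoint from $\bc$, I would use the complexity bound to position the sharing pair $u_1, u_2$ for $\bc$ away from $\bd$, and, if $w$ is also a $(k,l)$-vertex, away from a chosen sharing pair for $\bd$ as well. The image pair $\phi(u_1), \phi(u_2)$ is then a sharing pair for $\bc'$ whose two curves are disjoint from the curve $\be$ corresponding to $\widetilde\phi(w)$. Standard sharing-pair geometry forces $\be$ to lie entirely inside one of the four complementary subsurfaces of $\phi(u_1) \cup \phi(u_2)$, and in every such region $\be$ is disjoint from the boundary curve $\bc'$ of the $\Sigma_{k,l}^1$ piece, unless $\be$ is isotopic to $\bc'$; this exception is ruled out by the injectivity of the extension on $(k,l)$-vertices together with the vertex-type preservation of Proposition \ref{vertex_prop}.

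Running the same construction with $\phi^{-1}$ in place of $\phi$ yields a two-sided inverse of $\widetilde\phi$ on vertices, so $\widetilde\phi$ is a simplicial automorphism. By construction $r(\widetilde\phi) = \phi$, and by using $\eta_X$ to realise any $\psi \in \Aut \vC_X(\Sigma)$ by a mapping class one checks that the extension of $r(\psi)$ recovers $\psi$. Hence $r$ is bijective and $\eta_Y$ is an isomorphism. The key quantitative input is precisely the pair of bounds $g \ge k+4$ and $n \ge l+3$, which are what is needed to invoke Lemma \ref{ShTr} for the $(k+1,l)$- or $(k,l+1)$-sharing pairs and to position those pairs disjointly as required in the adjacency check.
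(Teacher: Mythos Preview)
Your proposal is correct and follows essentially the same approach as the paper: extend $\phi$ to the missing $(k,l)$-vertices via sharing pairs, use Lemma~\ref{ShTr} for well-definedness, verify that adjacency is preserved, and then invoke the isomorphism $\eta_X$. The only cosmetic difference is that the paper packages the adjacency check as a combinatorial characterisation (an edge in $\vC_X(\Sigma)$ incident to a $(k,l)$-vertex $u$ exists iff some sharing pair in $\vSP(\bu)$ spans an edge with, or equals, the other vertex in $\vC_Y(\Sigma)$), whereas you argue it geometrically by positioning a sharing pair disjoint from the second curve; the content and the use of the numerical bounds are the same.
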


\begin{proof}
By Lemma \ref{Injectivity} the map $\eta_Y$ is injective.  It remains to show that it is surjective.  Let $\phi$ be an automorphism of $\Aut \vC_Y(\Sigma)$.  By assumption, either $(k+1,l)$-genus sharing pairs or $(k,l+1)$-puncture sharing pairs are preserved by $\phi$ and by Lemma \ref{ShTr} we have a well defined permutation $\widehat \phi$ of the vertices of $\vC_X(\Sigma)$ such that $\widehat \phi$ restricts to $\phi$ on the vertices of $\vC_Y(\Sigma)$. We will show that $\widehat \phi$ in fact extends to an automorphism of $\vC_X(\Sigma)$.

Suppose vertices $u,v$ of $\vC_X(\Sigma)$ correspond to the curves $\bu$ and $\bv$.  We need to show that the adjacency of $u$ and $v$ in the complex $\vC_X(\Sigma)$ is characteristic in its subcomplex $\vC_Y(\Sigma)$.  If both $u$ and $v$ are vertices of $\vC_Y(\Sigma)$ then this is trivial.  Suppose neither $u$ nor $v$ are vertices of $\vC_Y(\Sigma)$, that is, they are both $(k,l)$-vertices.  Then $u$ and $v$ span an edge in $\vC_X(\Sigma)$ if and only if there are vertices $y_1 \in \vSP(\bu)$ and $y_2 \in \vSP(\bv)$ that span an edge in $\vC_Y(\Sigma)$.

Finally, suppose $v$ is a vertex of $\vC_Y(\Sigma)$ and $u$ is not, that is, $u$ is a $(k,l)$-vertex. The vertices span an edge in $\vC_X(\Sigma)$ if there exists some vertex $y \in \vSP(\bu)$ spanning an edge with, or equal to, $v$ in $\vC_Y(\Sigma)$. Since both $\vC_X(\Sigma)$ and $\vC_Y(\Sigma)$ are connected linear subcomplexes of $\vC_\vS(\Sigma)$ all edges are of this form. We have therefore shown that $\widehat \phi \in \Aut \vC_X(\Sigma)$.

By assumption there exists some $f \in \Mode(\Sigma)$ whose image in $\Aut \vC_X(\Sigma)$ is precisely $\widehat \phi$. Since the restriction of $\widehat \phi$ to the subcomplex $\vC_Y(\Sigma)$ is $\phi$ it follows that the image of $f$ in $\Aut \vC_Y(\Sigma)$ is indeed $\phi$.
\end{proof}

In order to apply Proposition \ref{Step} for $(k,l)$-genus sharing pairs we require that $g \ge k+3$.  Similarly, to apply Proposition \ref{Step} for $(k,l)$-puncture sharing pairs we require $n \ge l+2$.  These conditions are due to Lemma \ref{ShTr}.  Combining these bounds with the bounds from Lemmas \ref{genus_sp} and \ref{puncture_sp} we arrive at the definition of small vertices given in Section \ref{vertex_types}.

\subsection{Navigating between subcomplexes}\label{EndGame}
Recall that $\vC_{\vS(A)}(\Sigma)$ is the subcomplex of $\vC_\vS(\Sigma)$ spanned by vertices that correspond to curves separating regions represented in $A \subset \vR(\Sigma)$.  From this definition we see that $\vC_{\vS(A)}(\Sigma)$ is a linear subcomplex of $\vC_\vS(\Sigma)$.  Indeed, if $\bc_1$ and $\bc_2$ are two curves that separate regions represented in $A$, then every curve separating $\bc_1$ and $\bc_2$ also separate two regions represented in $A$.  As discussed in Section \ref{vertex_types} this implies that $v$ is a minimal vertex of $\vC_{\vS(A)}(\Sigma)$ if and only if it is an extreme vertex of some maximal linear simplex.

Let $v$ be a $(k,l)$-vertex of $\vC_{\vS(A)}(\Sigma)$.  In Section \ref{introduction} we saw that $v$ is small if there exists a $(k_1,l_1)$-vertex and a $(k_2,l_2)$-vertex in $\vC_{\vS(A)}(\Sigma)$ such that;
\begin{align}
\tag{5} g &\ge k + \max\{ k_1 + k_2 , 2\} + 1, \mbox{ and} \\ 
\tag{6} n &\ge l + \max \{ l_1 + l_2 , 1\} + 1.
\end{align}

We wish to prove Theorem \ref{etas} which states that if every minimal vertex of is small then the natural homomorphism
\[
\eta_{\vS(A)} : \Mode(\Sigma) \to \Aut \vC_{\vS(A)}(\Sigma)
\]
is an isomorphism.

\subsection*{Diagrammatically representing orbits}
When $n=0$ the proof of Brendle-Margalit progresses by an inductive argument on $k$, where $(k,0)$-vertices are minimal \cite{BM17}.  Similarly, when $g=0$ the proof of the author uses induction on $l$, where $(0,l)$-vertices are minimal \cite{BraidMeta}.  In effect, these special cases use the fact that each vertex type of $\vC_\vS(\Sigma)$ can be defined by a positive integer.

When $g,n>0$, in general not all minimal curves are of the same vertex type. Furthermore, we require two integers to define each vertex type.  More specifically, every point in $[0,g]\times[0,n]$ with integer coordinates describes a vertex type in $\vC_\vS(\Sigma)$, except for $(0,0)$, $(0,1)$, $(g, n)$, and $(g, n - 1)$.  It will be useful therefore to define
\[
\vV_\vS : = \big ( [0,g]\times[0,n] \big ) \cap \ZZ \times \ZZ \ \sm \ \{(0,0), (0,1), (g, n), (g, n - 1)\}.
\]
Our strategy for proving Theorem \ref{etas} will make use of this notion.  By Proposition \ref{Step} we can remove all $(k,l)$-vertices from $\vC_\vS(\Sigma)$ and, under certain restrictions, the resulting subcomplex $\vC_X(\Sigma)$ will have $\Mode(\Sigma)$ as its group of automorphisms.  Furthermore, the elements of $\vV_\vS \sm \{(k,l)\}$ will correspond to the vertex types of $\vC_X(\Sigma)$.

We continue this process until we reach a subset of $\vV_\vS$ whose elements correspond to the vertex types of $\vC_{\vS(A)}(\Sigma)$.  The proof therefore amounts to verifying that Proposition \ref{Step} can be applied in each instance.  We will see that this is possible due to the assumption that all minimal vertices of $\vC_{\vS(A)}(\Sigma)$ are small.

Note that the correspondence between $\vV_\vS$ and the vertex types of $\vC_\vS(\Sigma)$ is not bijective.  This is because every $(k,l)$-vertex is equal to a $(g-k,n-l)$-vertex.  It follows that the $(k,l)$-vertices correspond to two elements of $\vV_\vS$, unless both $g$ and $n$ are even and $k=g/2$, $l=n/2$.

\subsection*{Diagrammatically representing linear subcomplexes}
As discussed above, we would like to be able to check whether or not we can apply Proposition \ref{Step} to a subcomplex defined by some subset of $\vV_\vS$.  One condition we need to verify is that the subcomplex in question is a linear subcomplex.

To that end, let $(x_1,y_1)$ and $(x_2,y_2)$ be points in $\vV_\vS$.  We write $(x_1,y_1) < (x_2,y_2)$ if
\[
(x_2 - x_1, y_2 - y_1) \in \{ (1,0), (0,1) \}.
\]
Now, for any two points $(x_s,y_s), (x_t, y_t) \in \vV_\vS$ such that $x_s \le x_t$ and $y_s \le y_t$ it is clear that there exists a sequence of points in $\vV_\vS$;
\[
(x_s,y_s) < (x_1,y_1) < (x_t, y_t) < \dots < (x_t, y_t).
\]
Moreover, this sequence forms part of a maximal linear simplex in $\vC_\vS(\Sigma)$ up to the action of $\Mode(\Sigma)$.  It follows that in order to verify that a subset $\vV_X$ of $\vV_\vS$ describes a linear subcomplex $\vC_X(\Sigma)$ of $\vC_\vS(\Sigma)$ we need to show that for any two points $(x_s,y_s), (x_t, y_t) \in \vV_X$ such that $x_s \le x_t$ and $y_s \le y_t$ there exists a sequence as above in $\vV_\vS$.

\begin{proof}[Proof of Theorem \ref{etas}]
Let $(k_1,l_1)$-, $\dots$, $(k_m, l_m)$-vertices be minimal such that $k_1 < k_i$ and $l_m < l_i$ for all $i$.  Our first goal is to apply Proposition \ref{Step} until we arrive at the subcomplex obtained by removing all $(k,l-1)$-vertices, for $k_1 \le k \le g$ and $0 \le l \le l_m$.  Since all minimal vertices are small we have that either $l_m=0$ or $n \ge 3 l_m +1$.  If $l_m \ge 2$ then for a connected linear subcomplex $\vC_X(\Sigma)$ containing $(0,2)$-vertices we apply Lemma \ref{puncture_sp} to see that $(g,l_m)$-puncture sharing pairs are preserved by automorphisms of $\vC_X(\Sigma)$.  In fact, we have that all $(k,l)$-puncture sharing pairs are preserved by automorphisms for $k$ and $l$ as above.  We may apply Proposition \ref{Step} until we arrive at the desired subcomplex.  We use the discussion preceding this proof to verify that all subcomplexes we pass through are connected linear subcomplexes, see Figure \ref{diagram1}.
\begin{figure}[t]
\centering
\labellist \hair 1pt
	\pinlabel {$n$} at -70 900
	\pinlabel {$0$} at -70 -50
	\pinlabel {$g$} at 850 -65
	\pinlabel {$\longrightarrow$} at 1110 400
	%
	%
	\pinlabel {$\longrightarrow$} at 2530 400
	%
	\pinlabel {$n - l_m$} at 2700 680
	\pinlabel {$l_m$} at 2790 180
	\pinlabel {$k_1$} at 3070 -65
	\pinlabel {$g - k_1$} at 3510 900
    \endlabellist
\includegraphics[scale=0.1]{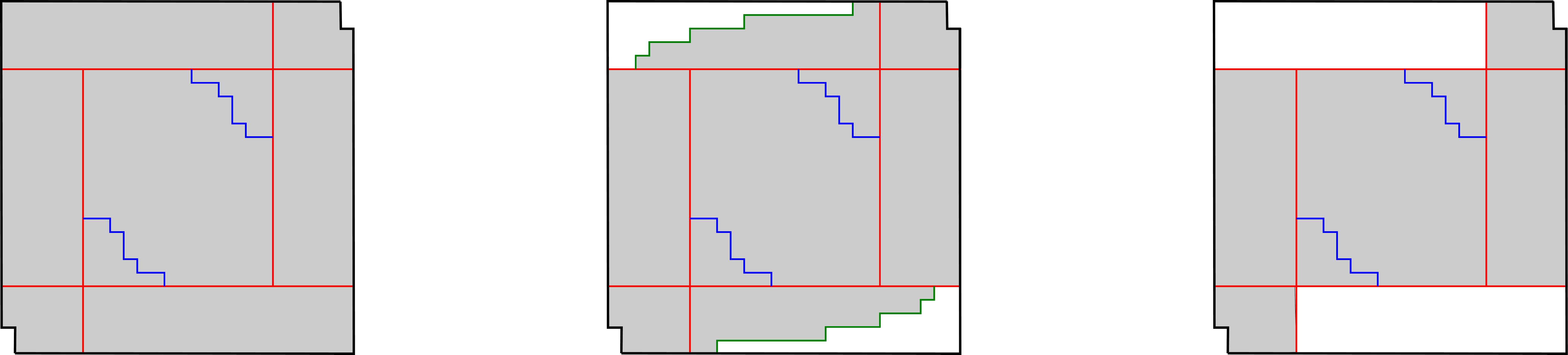}
\caption{In the first diagram the shaded area contains every point in $\vV_\vS$.  The horizontal red lines are $l_m$ and $n-l_m$, where $l_m$ is the lowest value of $l$ such that $\vC_{\vS(A)}(\Sigma)$ contains $(k,l)$-vertices.  We are able to find a sequence of linear subcomplexes between $\vC_\vS(\Sigma)$ and the subcomplex obtained by removing all $(k,l-1)$-vertices, where $k_1 \ge k \ge g$ and $0 \le l \le l_m$.  Here, $k_1$ is the lowest value of $k$ such that $(k,l)$-vertices belong to $\vC_{\vS(A)}(\Sigma)$.}
\label{diagram1}
\end{figure}

If $l_m = 1$ then $n \ge 4$ then by Lemma \ref{puncture_sp} we can show that $(k,1)$-puncture sharing pairs are preserved by automorphisms for $k_1 \le k \le g$.  This requires the fact that each linear subcomplex contains $(0,2)$-vertices.  We then proceed as above.  If $l_m=0$ then we need not remove any vertices to arrive at the desired subcomplex.

The next step is to once again apply Proposition \ref{Step} multiple times in order to obtain the automorphism group of the subcomplex obtained by further removing all $(k-1,l)$-vertices, for $0 \le k \le k_1$ and $l_m \le l \le n$.  Since all minimal vertices are small, we have that either $k_1=0$ or $g \ge 3 k_1 + 1$.  Suppose $k_1 \ge 1$ and let $\vC_X(\Sigma)$ be any maximal linear subcomplex of $\vC_\vS(\Sigma)$ containing $(1,0)$-vertices.  Since $g \ge k_1 + 3$, by Lemma \ref{genus_sp} we have that $(k,l)$-genus sharing pairs are preserved by automorphisms of $\vC_X(\Sigma)$ for values of $k$ and $l$ as above. Similar to the previous step, we may remove all such vertices and sustain an isomorphism between $\Mode(\Sigma)$ and the automorphism group of the verious subcomplexes complex, see Figure \ref{diagram2}. 
\begin{figure}[h]
\centering
\labellist \hair 1pt
	\pinlabel {$n - l_m$} at -150 680
	\pinlabel {$l_m$} at -80 180
	\pinlabel {$k_1$} at 200 -65
	\pinlabel {$g - k_1$} at 660 900
	%
	\pinlabel {$\longrightarrow$} at 1110 400
	%
	%
	\pinlabel {$\longrightarrow$} at 2530 400
	%
	\pinlabel {$n - l_m$} at 2700 680
	\pinlabel {$l_m$} at 2790 180
	\pinlabel {$k_1$} at 3070 -65
	\pinlabel {$g - k_1$} at 3510 900
    \endlabellist
\includegraphics[scale=0.1]{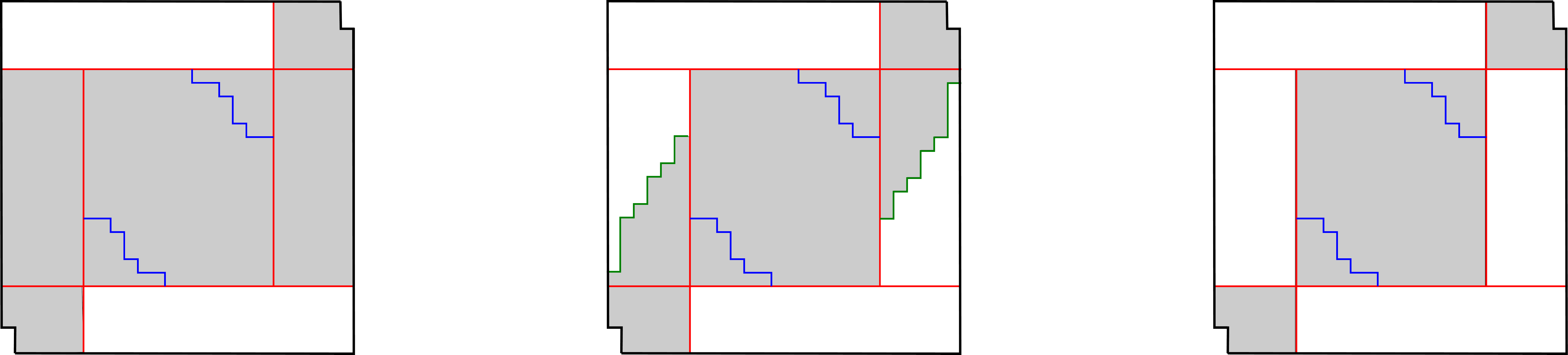}
\caption{The subcomplex of $\vC_\vS(\Sigma)$ obtained by removing all $(k,l)$-vertices, where $k_1 \ge k \ge g$ and $0 \le l \le l_m$, or $0 \le k \le k_1$ and $l_m \le l \le n$ has the extended mapping class group as its group of automorphisms.}
\label{diagram2}
\end{figure}
If $k_1=0$ we need not remove any vertices to arrive at the desired suubcomplex.  As such, we need not use Lemma \ref{genus_sp}.

The final step is to remove all $(k,l)$-vertices, for $k < k_i$ and $l < l_i$ for any $i \in \{1, \dots, m\}$.  Since every minimal vertex is small we have that the inequalities in both Lemmas \ref{genus_sp} and \ref{puncture_sp} are satisfied for either $(k+1,l)$-genus sharing pairs or $(k,l+1)$-puncture sharing pairs.  We may apply Proposition \ref{Step} again (see Figure \ref{diagram3}), and we conclude that $\eta_{\vS(A)} : \Mode(\Sigma) \to \vC_{\vS(A)}(\Sigma)$ is an isomorphism.
\begin{figure}[h]
\centering
\labellist \hair 1pt
	\pinlabel {$n - l_m$} at -150 680
	\pinlabel {$l_m$} at -80 180
	\pinlabel {$k_1$} at 200 -65
	\pinlabel {$g - k_1$} at 660 900
	%
	\pinlabel {$\longrightarrow$} at 1110 400
	%
	%
	\pinlabel {$\longrightarrow$} at 2530 400
	%
    \endlabellist
\includegraphics[scale=0.1]{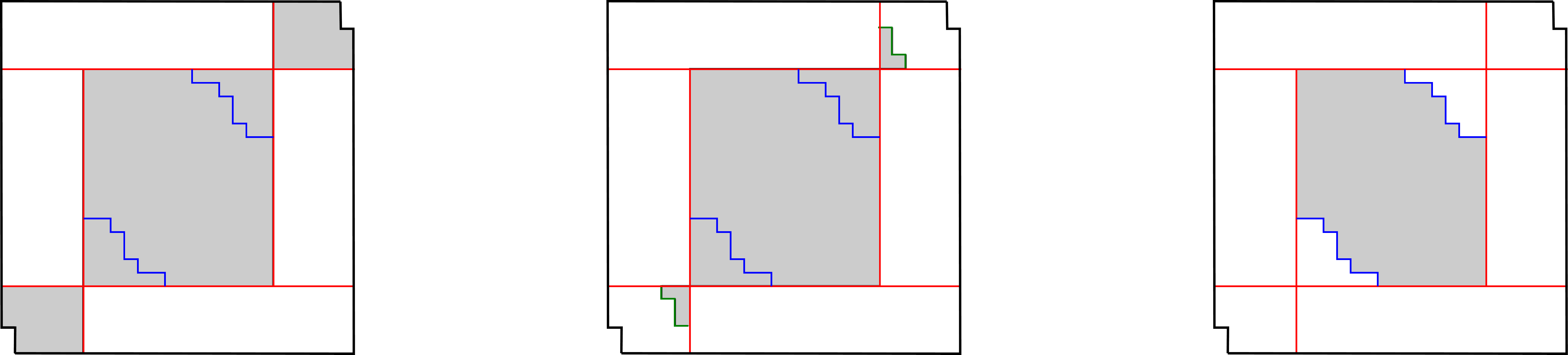}
\caption{The blue lines has endpoints at $(k_1,l_1)$ and $(k_m,l_m)$, and corners at $(k_i,l_i)$ where $(k_i,l_i)$-vertices are minimal for all values of $i \in \{1,2,\dots,m\}$.  This figure, together with Figures \ref{diagram1} and \ref{diagram2}, show that there exists a sequence of linear subcomplexes between $\vC_\vS(\Sigma)$ and $\vC_{\vS(A)}(\Sigma)$.}
\label{diagram3}
\end{figure}
\end{proof}

\section{Complexes of dividing sets}\label{section_DS}
The purpose of this section is to connect Theorem \ref{etas} with complexes of regions. We do this by using a generalisation of separating curves for a surface $\Sigma$ of strictly positive genus introduced by Brendle-Margalit \cite[Section 4]{BM17}.  As in the previous section we shall therefore assume that $\Sigma = \Sigma_{g,n}$, and that $g,n > 0$.

\subsection*{Dividing sets}
A \emph{dividing set} in $\Sigma$ is a multicurve that divides the surface into exactly two regions. We allow for one of the regions to be an annulus, that is, the multicurve may consist of two isotopic non-separating curves. As with separating curves, we call the two regions obtained by cutting along a dividing set $\bd$ the \emph{associated regions} of $\bd$. We say that two dividing sets are \emph{nested} if one is contained entirely in one of the associated regions of the other, otherwise we say that they \emph{intersect}. If two dividing sets intersect then their respective multicurves may intersect or they may not.

Let $\vDS$ denote the set of all $\Mode(\Sigma)$-orbits of dividing sets in $\Sigma$. For a subset $D \subseteq \vDS$ we define the simplicial flag complex $\vC_D(\Sigma)$ analogously to complexes of regions. The vertices of $\vC_D(\Sigma)$ correspond to all homotopy classes of dividing sets that represent elements of $D$.  We say that a vertex corresponds to a dividing set if it corresponds to the equivalence class of that dividing set. Two vertices span an edge in $\vC_D(\Sigma)$ if they correspond to nested dividing sets.  As with complexes of regions there is a natural homomorphism
\[
\eta_D : \Mode (\Sigma) \rightarrow \Aut \vC_D (\Sigma)
\]
for every subset $D \subseteq \vDS$.

For any dividing set $\bd$ an \emph{enveloping region} $\wh R_\bd$ of $\bd$ is a single-boundary region such that $\bd \subset \wh R_\bd$ and $\bd$ is not contained in any proper single-boundary subsurface of $\wh R_\bd$.  If the vertex $v \in \vC_D(\Sigma)$ corresponds to the dividing set $\bv$, we write $\wh v$ for the enveloping region of $\bv$, that is, $\wh v := \wh R_\bv$.  The following definitions are equivalent to those made in Section \ref{introduction} in the context of complexes of dividing sets.

\subsection*{Minimal vertices}
Let $\vC_D(\Sigma)$ be a complex of dividing sets.  We say that a vertex $v \in \vC_D(\Sigma)$ is \emph{minimal} if for any vertex $u$ such that $\wh u \subset \wh v$, we have that $\wh u$ and $\wh v$ are homotopic.

The following definition of small vertices is inherited from the subcomplexes of separating curves we visited in the previous section.

\subsection*{Small vertices}
Let $\Sigma = \Sigma_{g,n}$ and let $\vC_D(\Sigma)$ be a complex of dividing sets.  We say that a vertex $v \in \vC_D(\Sigma)$ is \emph{small} if there exist two vertices $u_1, u_2$ such that
\begin{align}
g &\ge g(\wh v) + \max \{ g( \wh u_1) + g( \wh u_2) , 2 \} + 1, \mbox{ and} \\ 
n &\ge n( \wh v) + \max \{ n( \wh u_1) + n( \wh u_2), 1\} + 1.
\end{align}

For $A \subset \vR (\Sigma)$ we define $\partial A \subseteq \vDS$ to be the subset consisting of dividing sets where each of the associated regions contain a region represented in $A$.
In this section we will use Theorem \ref{etas} to prove the following result.

\begin{thm}\label{etad}
Let $\vC_{\partial A}(\Sigma)$ be a complex of dividing sets for some $A \subset \vR(\Sigma)$.  If every minimal vertex of $\vC_{\partial A}(\Sigma)$ is small then the natural homomorphism
\[
\eta_{\partial A} : \Mode(\Sigma) \to \Aut \vC_{\partial A}(\Sigma)
\]
is an isomorphism.
\end{thm}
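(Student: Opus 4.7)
The plan is to reduce Theorem \ref{etad} to Theorem \ref{etas} via the natural full-subcomplex embedding $\vC_{\vS(A)}(\Sigma) \hookrightarrow \vC_{\partial A}(\Sigma)$ obtained by regarding each separating curve in $\vS(A)$ as a single-component dividing set. Disjointness of two separating curves coincides with nestedness of the corresponding single-component dividing sets, so the inclusion is indeed full. The proof will consist of injectivity of $\eta_{\partial A}$, followed by a surjectivity argument that factors through the isomorphism $\eta_{\vS(A)}$ of Theorem \ref{etas}.

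Injectivity of $\eta_{\partial A}$ should follow essentially verbatim from the proof of Lemma \ref{Injectivity}. Any nonseparating curve $\bc$ is filled by regions whose boundary components are separating curves in $\vS(A)$, which appear as vertices of $\vC_{\partial A}(\Sigma)$; the commutator-with-Dehn-twists argument, together with triviality of the centre of $\PMod(\Sigma)$, then forces the kernel to be trivial.

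For surjectivity, given $\phi \in \Aut \vC_{\partial A}(\Sigma)$ I would establish (i) that $\phi$ preserves the subcomplex $\vC_{\vS(A)}(\Sigma)$, and (ii) that the resulting restriction map $\Aut \vC_{\partial A}(\Sigma) \to \Aut \vC_{\vS(A)}(\Sigma)$ is injective. Combined with Theorem \ref{etas}, these two facts imply that the composition $\Mode(\Sigma) \to \Aut \vC_{\partial A}(\Sigma) \to \Aut \vC_{\vS(A)}(\Sigma)$ equals the isomorphism $\eta_{\vS(A)}$; together with injectivity of $\eta_{\partial A}$ this yields that $\eta_{\partial A}$ itself is an isomorphism. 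To prove (i), I would characterise single-component dividing sets combinatorially, for example as those vertices $v$ whose enveloping region cannot be realised by a vertex $u$ whose underlying multicurve is a strict subset of $\bv$; such a condition should be detectable from the link structure of $v$, using the sharing-pair and linear-simplex machinery of Sections \ref{vertex_types} and \ref{SP}, suitably adapted to multicurves. For (ii), one verifies that any two distinct dividing sets can be distinguished by a separating curve in $\vS(A)$ nested with exactly one of them, a combinatorial check in which the small-minimal-vertices hypothesis guarantees sufficiently many separating curves to make the required distinctions.

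The main obstacle will be step (i): giving a combinatorial description of single-component dividing sets within $\vC_{\partial A}(\Sigma)$ that is invariant under all automorphisms. This mirrors the characteristic-subset analysis of Proposition \ref{vertex_prop}, but now requires distinguishing vertices by the number of components of their underlying multicurve rather than by the topological type of their enveloping regions. I expect the linear-simplex and increment arguments of Section \ref{vertex_types}, together with analogues of Lemmas \ref{genus_sp} and \ref{puncture_sp}, to yield the desired characterisation, paralleling the strategy used by Brendle--Margalit in the closed case.
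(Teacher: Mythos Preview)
Your overall architecture is exactly the paper's: prove injectivity via Lemma \ref{Injectivity}, then for surjectivity show (i) that the vertices corresponding to separating curves form a characteristic subcomplex so that every $\phi \in \Aut \vC_{\partial A}(\Sigma)$ restricts to an automorphism of $\vC_{\vS(A)}(\Sigma)$, and (ii) that this restriction is faithful. Theorem \ref{etas} then finishes the argument precisely as you describe.

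Where you diverge is in the execution of (i). The paper does \emph{not} use sharing pairs or the increment machinery of Section \ref{SP} here; those tools are designed to distinguish $(k,l)$-curves from $(k',l')$-curves, not single-component dividing sets from multi-component ones. Instead the paper introduces a six-fold classification of vertices into types $S_1, M_1, S_X, M_X, S_2, M_2$ according to the number of components and whether the vertex is $1$-sided, $2$-sided with only type-$S_1$ vertices on one side, or generically $2$-sided. It then proves (Lemmas \ref{char1}, \ref{char2}, \ref{char3}) that each ``$S$'' type is characteristic, using arguments of a rather different flavour: $1$-sided vertices are extreme vertices of maximal linear simplices, and within each sidedness class the $M$-types are picked out by the existence of triangles $\{u,v,w\}$ for which a \emph{unique} (or bounded number of) fourth vertex completes a configuration. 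These uniqueness-of-completion arguments exploit the pair-of-pants regions between nearby dividing sets and are where the small-vertex hypothesis is actually invoked. Your proposed criterion (``$\bv$ has no vertex $u$ whose multicurve is a strict subset'') is not obviously $\Aut$-invariant and would need substantial reworking.

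For (ii), the paper also takes a different route: rather than distinguishing arbitrary pairs of dividing sets by a single separating curve, it argues by induction on the distance from a vertex to the subcomplex $\vC_{\vS(A)}(\Sigma)$, using that the $\Mode(\Sigma)$-orbit of a nearby vertex fills an associated region. Your single-curve separation idea may well be salvageable, but the inductive filling argument is cleaner and avoids having to produce a separating curve in $\vS(A)$ with prescribed nesting behaviour relative to an arbitrary multicurve.
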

Notice that in the special case $\Sigma_{0,n}$ we have $\vS = \vDS$ and $\vS(A) = \partial A$ for $A \subset \vR (\Sigma)$. In general $\vS(A) = \partial A \cap \vS$.  Suppose $u_1, u_2 \in \vC_{\partial A}(\Sigma)$ are two vertices that span an edge.  If $u_1, u_2$ correspond to dividing sets $\bu_1, \bu_2$ separated by the dividing set $\bv$ then $\bv$ must also separate two regions represented in $A \subset \vR(\Sigma)$.  If follows that there exists a vertex $v \in \vC_{\partial A}(\Sigma)$ corresponding to $\bv$.  We will use this fact throughout this section and the proof of Theorem \ref{etad} without mention.

\subsection*{The case with annular dividing sets}

We call a dividing set \emph{annular} when it has an annular associated region. Clearly, there is a bijection between the isotopy classes of annular dividing sets and isotopy classes of non-separating curves. Suppose then that annular dividing sets are represented in $\partial A \subseteq \vDS$, that is, annuli are represented in $A \subset \vR(\Sigma)$. It follows from \cite[Lemma 4.1]{BM17} that the vertices of $\vC_{\partial A}(\Sigma)$ that correspond to annular dividing sets form a characteristic subset. We thus obtain an injective homomorphism
\[
\Aut \vC_{\partial A}(\Sigma) \hookrightarrow \Aut \vN (\Sigma),
\]
where $\vN(\Sigma)$ is the complex of non-separating curves. From Lemma \ref{Injectivity} and \cite[Theorem 1.4]{Nonsep} we have that the composition
\[
\Mode(\Sigma) \xhookrightarrow{\eta_{\partial A}} \Aut \vC_{\partial A}(\Sigma) \hookrightarrow \Aut \vN (\Sigma) \xrightarrow{\cong} \Mode(\Sigma)
\]
is injective and equal to the identity map, therefore $\eta_{\partial A} : \Mode(\Sigma) \rightarrow \Aut \vC_{\partial A}(\Sigma)$ is an isomorphism. In the remainder of this section we will assume that annular dividing sets are not represented in $\partial A \subseteq \vDS$ and prove that the homomorphism is an isomorphism in this case as well.

\subsection{Characteristic vertex types}\label{vertex_types_DS}
Assume throughout this section that no annular dividing sets are represented in $\partial A$.  Let $\sigma$ be any simplex in the complex $\vC_{\partial A}(\Sigma)$ consisting of vertices $v_1,\dots,v_m$. We call a collection of pairwise nested multicurves $\bv_1,\dots,\bv_m$ a \emph{normal form} representative for $\sigma$ if each $v_i$ corresponds to $\bv_i$. We state the following result of Brendle-Margalit \cite[Lemma 4.3]{BM17}.

\begin{lem}[Brendle-Margalit]\label{Normal}
Let $\sigma$ be a simplex of $\vC_{\partial A}(\Sigma)$. There exists a normal form representative of $\sigma$, unique up to isotopy.
\end{lem}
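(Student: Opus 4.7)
The plan is to proceed by induction on $m$, the number of vertices in the simplex $\sigma$. The base case $m=1$ is immediate: any representative multicurve of $v_1$ is itself a normal form, and any two representatives are isotopic by definition.

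For the inductive step, assume a normal form $(\bv_1,\dots,\bv_{m-1})$ has been constructed for the face spanned by $v_1,\dots,v_{m-1}$. Pick an arbitrary multicurve representative $\bw$ of $v_m$. Since each pair $v_i, v_m$ spans an edge, the dividing sets they represent are nested; in particular, after a suitable isotopy, $\bw$ can be made disjoint from $\bv_i$ for each individual $i$. The task is to perform all these isotopies at once without disturbing the multicurves $\bv_1,\dots,\bv_{m-1}$, which are already in normal form. The standard tool is the bigon criterion applied to $\bw$ and the multicurve $\bv_1 \cup \cdots \cup \bv_{m-1}$: isotope $\bw$ into minimal position with this union. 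Because $v_m$ is nested with each $v_i$, the geometric intersection number of $\bw$ with each $\bv_i$ is zero, so minimal position forces $\bw$ to be disjoint from every $\bv_i$ simultaneously.

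It then remains to check that the resulting disjoint configuration has the correct nesting data, not merely disjointness. For each $i$, the multicurve $\bw$ (now disjoint from $\bv_i$) lies entirely in one of the two associated regions of $\bv_i$; and since disjoint representatives of nested dividing sets realize the nesting (the ``inner'' dividing set lies in the uniquely determined associated region of the ``outer'' one, up to isotopy), the combinatorial nesting recorded by the edges of $\sigma$ is automatically realized. Thus $(\bv_1,\dots,\bv_{m-1},\bw)$ is a normal form representative for $\sigma$.

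For uniqueness, suppose $(\bv_1,\dots,\bv_m)$ and $(\bv_1',\dots,\bv_m')$ are two normal form representatives of $\sigma$. Since each $\bv_i$ is isotopic to $\bv_i'$ individually, and both tuples realize pairwise disjoint representatives of the same collection of isotopy classes, a standard simultaneous-isotopy argument (again via the bigon criterion, applied inductively to each $\bv_i$ relative to the union of the others already in place) produces an ambient isotopy carrying one tuple to the other. The main obstacle is the simultaneous realization step: the bigon criterion is typically stated for pairs, so one must be careful that removing bigons between $\bw$ and one component of $\bv_1 \cup \cdots \cup \bv_{m-1}$ does not introduce intersections with another component; this is handled by the usual observation that innermost bigons can be removed one at a time without affecting the remaining multicurves.
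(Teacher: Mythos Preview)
The paper does not supply its own proof of this lemma: it is stated as a result of Brendle--Margalit and cited as \cite[Lemma 4.3]{BM17}, so there is no in-paper argument to compare against. I can only evaluate your sketch on its own merits.

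Your outline is sound, and for simplices whose dividing sets have pairwise no isotopic component curves it goes through essentially as written. The one place I would push back is the sentence ``since disjoint representatives of nested dividing sets realize the nesting\ldots the combinatorial nesting recorded by the edges of $\sigma$ is automatically realized.'' This is the heart of the lemma, not a triviality. As the paper explicitly warns just before introducing normal form, two dividing sets can fail to be nested even when their underlying multicurves are disjoint; so the bigon criterion by itself only delivers disjointness, and the passage to nesting needs an argument. When no component of $\bw$ is isotopic to a component of any $\bv_i$, the standard uniqueness of minimal position does give you what you want: the disjoint realization is unique up to ambient isotopy, and since a nested realization exists, the one you produced is nested. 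But when some component of $\bw$ is parallel to a component of some $\bv_i$, the bigon criterion leaves a genuine choice of which side that component sits on, and the wrong choice can produce a disjoint but non-nested configuration. Existence is still fine (make the correct choice), but you should say so rather than claim it is automatic; and the uniqueness argument must likewise confront this ambiguity and explain why the two choices yield ambient-isotopic configurations of the full simplex. These are exactly the details that make the lemma worth recording, and they are handled carefully in \cite{BM17}.
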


As dividing sets are a generalisation of separating curves, we may employ similar techniques when studying complexes of dividing sets. In particular, we can define \emph{sides} of vertices corresponding to dividing sets by analysing their links as in Section \ref{vertex_types}.  Recall, two vertices $u,w \in \Lk(v)$ lie on the same side of the vertex $v$ if there exists another vertex in $\Lk(v)$ that does not span an edge with either $u$ or $w$.

We say that a vertex $v$ of $\vC_{\partial A}(\Sigma)$ is \emph{$1$-sided} if every vertex of $\Lk(v)$ lies on the same side of $v$.  We say that $v$ is $2$-sided if there are vertices of $\Lk(v)$ that lie on different sides of $v$.  If $v$ is an isolated vertex we call it $0$-sided.  Notice that every $1$-sided vertex is minimal.  There may, however, be minimal vertices corresponding to multicurves that are not $1$-sided.

\subsection*{Vertex types}
For all $v \in \vC_{\partial A}(\Sigma)$ corresponding to a dividing set $\bv$, we define $| v |$ to be the number of components of $\bv$.
\begin{enumerate}
\item We say that a $1$-sided vertex $v$ is
\begin{center}
type $S_1$ if $|v|=1$, \eand type $M_1$ if $|v|\ge2$.
\end{center}
\item If $v$ is $2$-sided, and every vertex on one side of $v$ is type $S_1$ then we say $v$ is
\begin{center}
type $S_X$ if $|v| = 1 $, \eand type $M_X$ if $|v|\ge2$.
\end{center}
\item Finally, if $v$ is any other $2$-sided vertex we say $v$ is
\begin{center}
type $S_2$ if $|v| = 1 $, \eand type $M_2$ if $|v|\ge2$.
\end{center}
\end{enumerate}
Here, the letters `$S$' and `$M$' indicate that the vertex corresponds to a separating curve or multicurve respectively.

Our goal now is to show that vertices of type $S_1,S_2$ and $S_2$ form characteristic subsets of $\vC_{\partial A}(\Sigma)$, that is, separating curves determine a characteristic subset of vertices in $\vC_{\partial A}(\Sigma)$.  Recall from Section \ref{vertex_types} that a linear simplex is one with an ordering of the vertices determined by the sides of the corresponding curves. We use the same terminology in the case of dividing sets.

\subsection*{Linear simplices} A simplex $\sigma$ of $\vC_{\partial A}(\Sigma)$ is \emph{linear} if there is a labeling of its vertices $v_1,\dots,v_m$ such that $v_{i-1}$ and $v_{i+1}$ do not lie on the same side of $v_i$ for all $i = 1, \dots, m-1$. We call the vertices $v_0$ and $v_m$ the extreme vertices of the linear simplex $\sigma$.  As discussed in Section \ref{vertex_types} we have the following result.

\begin{lem}\label{LinSim2}
Let $\vC_{\partial A}(\Sigma)$ be a complex of dividing sets and let $\phi$ be an automorphism. If $\sigma = \{v_1,\dots,v_m\}$ is a maximal linear simplex then $\phi(\sigma) = \{\phi(v_1),\dots,\phi(v_m)\}$ is a maximal linear simplex.
\end{lem}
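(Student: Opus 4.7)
The proof plan is essentially to observe that both the property of being a simplex and the side relation are defined entirely in terms of adjacency data in $\vC_{\partial A}(\Sigma)$, which any automorphism preserves by definition. So linearity transports along $\phi$ more or less for free; maximality then follows by symmetry using $\phi^{-1}$.

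In more detail, I would first recall the definition: two vertices $u,w \in \Lk(v)$ lie on the same side of $v$ if there exists a third vertex in $\Lk(v)$ failing to span an edge with either $u$ or $w$. Since $\phi$ is a simplicial automorphism, it sends $\Lk(v)$ bijectively to $\Lk(\phi(v))$ and preserves the non-adjacency relation, so $u, w$ lie on the same side of $v$ if and only if $\phi(u),\phi(w)$ lie on the same side of $\phi(v)$. Thus, given a labeling $v_1,\dots,v_m$ of $\sigma$ witnessing linearity (i.e.\ $v_{i-1}$ and $v_{i+1}$ never lie on the same side of $v_i$), the induced labeling $\phi(v_1),\dots,\phi(v_m)$ witnesses linearity of $\phi(\sigma)$. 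So $\phi(\sigma)$ is a linear simplex.

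For maximality, I would argue by contradiction. Suppose $\phi(\sigma)$ is properly contained in a linear simplex $\tau$, i.e.\ there exist vertices extending $\phi(\sigma)$ in a labeling compatible with linearity. Applying $\phi^{-1}$, which is also an automorphism and hence preserves the same-side relation by the first paragraph, we obtain a linear simplex $\phi^{-1}(\tau)$ properly containing $\sigma$. This contradicts the maximality of $\sigma$, so $\phi(\sigma)$ must itself be maximal.

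The only point that requires a moment of care is checking that when the labeling of $\tau$ extends the labeling of $\phi(\sigma)$, one can genuinely pull this back to an extension of the original labeling of $\sigma$; but since linearity is a condition on consecutive triples $v_{i-1},v_i,v_{i+1}$ that is invariant under $\phi^{-1}$, the same sequence of vertices serves as a compatible labeling after applying $\phi^{-1}$. There is no real obstacle here — the lemma is in essence a formal consequence of the fact that the definition of a linear simplex is purely in terms of the combinatorics of the complex, a point worth emphasizing since it is used repeatedly in the sequel in analogy with Section \ref{vertex_types}.
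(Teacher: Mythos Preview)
Your proposal is correct and matches the paper's treatment exactly: the paper does not give an explicit proof of this lemma, instead stating it with the remark ``As discussed in Section \ref{vertex_types} we have the following result,'' precisely because the definitions of \emph{same side} and \emph{linear simplex} are purely combinatorial and hence manifestly preserved by simplicial automorphisms. Your write-up simply spells out this observation, including the use of $\phi^{-1}$ for maximality, which is the intended argument.
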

We now move on to showing that the various vertex types form characteristic subsets, beginning with vertices of type $S_1$.

\begin{lem}\label{char1}
Let $\vC_{\partial A}(\Sigma)$ be a complex of dividing sets.  If every minimal vertex of $\vC_{\partial A}(\Sigma)$ is small then the type $S_1$ vertices form a characteristic subset.
\end{lem}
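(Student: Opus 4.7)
The plan is to split the argument into two parts: first that the property of being $1$-sided is preserved by every automorphism, and then that, within the $1$-sided class, we can further single out the type $S_1$ vertices. The first part is essentially automatic, since the notion of two vertices lying on the same side of $v$ is defined purely in terms of the graph structure of $\Lk(v)$ (see Section \ref{vertex_types_DS}), so $\phi(v)$ is $1$-sided if and only if $v$ is. This reduces the lemma to distinguishing type $S_1$ (single separating curve, $1$-sided) from type $M_1$ (multicurve with $|v|\ge 2$, $1$-sided) by means of a graph-theoretic property preserved by $\Aut\vC_{\partial A}(\Sigma)$.

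For the geometric picture, a $1$-sided vertex $v$ has one inactive associated region and one active region $R$, and every dividing set in $\Lk(v)$ has a representative in $R$. If $v$ is of type $S_1$ then $R$ has a single boundary component equal to $\bv$, whereas if $v$ is of type $M_1$ with $|v|=k\ge 2$ then $R$ has $k$ boundary components, namely the components of $\bv$. My plan is to exploit this by showing that the integer $|v|$ can be recovered graph-theoretically from the star of $v$, for example as the length of the longest linear simplex (in the sense of Section \ref{vertex_types_DS}) contained in the star of $v$, all of whose vertices are themselves $1$-sided and whose extreme vertex opposite $v$ has envelope disjoint from $\wh v$.

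Under the smallness hypothesis on minimal vertices, I would construct such linear chains explicitly using normal form representatives from Lemma \ref{Normal}: for an $M_1$ vertex $v$ with $|v|=k$, one can peel off the components of $\bv$ one at a time by finding auxiliary $1$-sided vertices concentrated near each component in turn, producing a chain of length $k$, whereas for an $S_1$ vertex no such construction is possible because $\bv$ has a single component and $R$ has a single boundary. The forward direction of the equivalence follows directly from this construction, while the reverse direction uses normal form together with the fact that any $1$-sided neighbour of a type $S_1$ vertex must already have $\wh v$ in its complement.

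The main obstacle I expect is in verifying that the proposed invariant is well-defined and attains the value $|v|$ on every $M_1$ vertex: one must use smallness to guarantee that the auxiliary $1$-sided neighbours exist with the required envelopes, and handle a few low-complexity configurations by hand, mirroring the case analysis carried out by Brendle--Margalit in the closed setting \cite[Section 4]{BM17}. Once the invariant is in place, its preservation under $\Aut\vC_{\partial A}(\Sigma)$ is immediate from its graph-theoretic formulation, and combining this with automorphism-invariance of $1$-sidedness yields that the type $S_1$ vertices form a characteristic subset.
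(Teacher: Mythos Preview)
Your first step is fine and matches the paper: $1$-sidedness is defined purely in terms of the link, so it is automorphism-invariant. The problem is with your proposed invariant for separating $S_1$ from $M_1$.

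A linear simplex all of whose vertices are $1$-sided can have at most two vertices. Indeed, by the definition in Section~\ref{vertex_types_DS}, any non-extreme vertex $v_i$ of a linear simplex must have $v_{i-1}$ and $v_{i+1}$ on different sides, which is impossible if $v_i$ is $1$-sided. Hence the ``length of the longest linear simplex of $1$-sided vertices in the star of $v$'' is at most $2$ for every $1$-sided $v$, and it equals $2$ as soon as $v$ has any $1$-sided neighbour (which it does, under the smallness hypothesis, for both $S_1$ and $M_1$ vertices). So this invariant cannot detect $|v|$, and in particular cannot distinguish $S_1$ from $M_1$. Relatedly, the ``peel off one component at a time'' heuristic does not produce dividing sets: if $\bv$ separates $\Sigma$ into two regions, removing any single component of $\bv$ reconnects them, so $\bv$ minus a component is not a dividing set at all. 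Finally, your side condition ``extreme vertex opposite $v$ has envelope disjoint from $\wh v$'' is a geometric statement about enveloping regions, not a graph-theoretic one, so even if the rest worked you would still owe an argument that this is detectable inside $\vC_{\partial A}(\Sigma)$.

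The paper takes a different route that avoids these issues. Rather than attempting to read off $|v|$, it characterises $M_1$ directly by a rigidity condition: $v$ is of type $M_1$ if and only if there exist $u,w$ spanning a triangle with $v$ such that every other $1$-sided vertex spanning a triangle with $u$ and $w$ is adjacent to $v$. For the forward direction one takes $u$ in the $\Mode(\Sigma)$-orbit of $v$ sharing exactly one component with $\bv$, and $w$ corresponding to the boundary of the region cut out by $\bu\cup\bv$ on the active side; any $1$-sided vertex adjacent to both $u$ and $w$ is then trapped in that region and hence adjacent to $v$. For the reverse direction, if $v$ is $S_1$ then for any choice of $u,w$ one can push a copy of $\bv$ off itself inside the active region (staying disjoint from $\bu,\bw$), producing a $1$-sided vertex adjacent to $u,w$ but not to $v$. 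This criterion is purely combinatorial and does not require recovering $|v|$.
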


\begin{proof}
It follows from the definition of a maximal linear simplex that a vertex $v$ is $1$-sided if and only if it is an extreme vertex of some maximal linear simplex. We will show then that a vertex is type $M_1$ if and only if it is $1$-sided and there exist vertices $u,w$ such that;
\begin{enumerate}
\item $u$ and $w$ span a triangle with $v$, and
\item any other $1$-sided vertex spanning a triangle with $u$ and $w$ spans an edge with $v$.
\end{enumerate}
To prove one direction suppose $v$ is type $M_1$ and corresponds to the multicurve $\bv$. Let $u$ be a vertex in the $\Mode (\Sigma)$-orbit of $v$ corresponding to the multicurve $\bu$ disjoint from $\bv$ such that exactly one of the curves in $\bv$ is isotopic to a curve in $\bu$. Let $R$ be the unique region defined by cutting along $\bu$ and $\bv$ that contains more than one dividing set represented in $\partial A$. We now define $w$ to be the vertex of $\vC_{\partial A}(\Sigma)$ corresponding to $\partial R$. Clearly the vertices $u,v$ and $w$ span a triangle. Now, any choice of $1$-sided vertex, other than $v$, that spans a triangle with $u$ and $w$ must correspond to a dividing set contained in $R$. It follows then that any such vertex spans an edge with $v$.

Now assume that $v$ is a vertex type $S_1$ corresponding to $\bv$ and let $u$ and $w$ be vertices corresponding to dividing sets $\bu,\bw$ satisfying the conditions above. Let $R$ be the region of $\Sigma$ with boundary defined by $\bv$ and containing $\bu,\bw$. Since $v$ does not correspond to $\bu$ or $\bw$ there exists an element in the $\Mode (\Sigma)$-orbit of $\bv$ that is disjoint from $\bu$ and $\bw$ and intersects $\bv$. This completes the proof.
\end{proof}

We treat the remaining cases seemingly out of order by first showing that type $S_2$ vertices form a characteristic subsets before dealing with type $S_X$ vertices.

\begin{lem}\label{char2}
Let $\vC_{\partial A}(\Sigma)$ be a complex of dividing sets.  If every minimal vertex of $\vC_{\partial A}(\Sigma)$ is small then the type $S_2$ vertices form a characteristic subset.
\end{lem}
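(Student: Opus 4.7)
The strategy follows the template of Lemma \ref{char1}: find a purely combinatorial characterization of type $S_2$ vertices in terms of the link structure and the already-characterized type $S_1$ subset. First I would observe that 2-sidedness is defined intrinsically from the link structure in $\vC_{\partial A}(\Sigma)$, so 2-sided vertices form a characteristic subset. Moreover, since type $S_1$ vertices are characteristic by Lemma \ref{char1}, the condition ``one side of $v$ consists solely of type $S_1$ vertices'' is also characteristic, and this is exactly what distinguishes types $S_X, M_X$ from types $S_2, M_2$. Hence the subcollection of 2-sided vertices with both sides rich (each containing a non-$S_1$ vertex) is characteristic, reducing the problem to separating $S_2$ from $M_2$ inside this subcollection.

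The main obstacle is distinguishing a single separating curve ($|v|=1$) from a multicurve dividing set ($|v|\ge 2$) using only combinatorial data invariant under $\Aut \vC_{\partial A}(\Sigma)$. The plan is to exploit the geometric observation that for a single separating curve $\bv$, any dividing set nested with $\bv$ lies entirely in one of the two associated regions, and the two sides correspond symmetrically to these two regions; for a multicurve dividing set, the extra components allow the production of nearby dividing sets with fewer components that remain tightly linked to $\bv$. Concretely, I expect a characterization of the form: a 2-sided vertex $v$ with both sides rich is type $S_2$ if and only if there exist type $S_1$ vertices $u, w$ on opposite sides of $v$ such that $\{u, v, w\}$ spans a 2-simplex and any other 2-sided vertex spanning a triangle with $u$ and $w$ also spans an edge with $v$. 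For a single separating curve, the unique ``minimal enveloping region'' containing $\bu$ and $\bw$ is bounded by $\bv$, forcing the uniqueness; for a multicurve, one can produce competing 2-sided vertices via partial enlargements or parallel copies of the components of $\bv$ that satisfy the triangle condition but fail the edge condition.

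The forward direction of the characterization would be handled by an explicit construction, choosing $u$ and $w$ to be type $S_1$ vertices that cut off minimal pieces on each of the two associated regions of $\bv$; the assumption that every minimal vertex is small guarantees that such $u, w$ exist in $\vC_{\partial A}(\Sigma)$. The backward direction is where the essential work lies: given a configuration $(v, u, w)$ satisfying the combinatorial condition, I would put it into a normal form representative via Lemma \ref{Normal} and argue, by a case analysis on whether $\bv$ is a single curve or multicurve, that the uniqueness clause can only hold in the single-curve case. Invariance of the characterization under $\Aut \vC_{\partial A}(\Sigma)$ is then automatic, since all of its ingredients—2-sidedness, membership in the type $S_1$ subset, and the triangle and edge relations—have been shown to be preserved by automorphisms.
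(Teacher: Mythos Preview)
Your overall strategy is sound and matches the paper's: first observe that $2$-sidedness is intrinsic, that $S_1$ (and $M_1$) are characteristic by Lemma~\ref{char1}, and hence that $S_X \cup M_X$ and its complement $S_2 \cup M_2$ among $2$-sided vertices are characteristic. The problem then reduces to separating $S_2$ from $M_2$ by a combinatorial criterion. This is exactly the paper's reduction.

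However, the specific characterization you propose is wrong, and in fact the geometric picture is the reverse of what you describe. You claim that for a type $S_2$ vertex $v$ one can choose type $S_1$ vertices $u,w$ on opposite sides so that every other $2$-sided vertex spanning a triangle with $u,w$ must also span an edge with $v$. This fails for any such choice. If $\bv$ is a single separating curve and $\bu,\bw$ are type $S_1$ curves cutting off minimal pieces in the two associated regions, the complement of those two small pieces is large and contains many curves in the $\Mode(\Sigma)$-orbit of $\bv$ which intersect $\bv$ essentially while remaining disjoint from $\bu$ and $\bw$; each of these gives a $2$-sided vertex nested with $u$ and $w$ but not with $v$. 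Your appeal to a ``minimal enveloping region containing $\bu$ and $\bw$'' does not pin down $\bv$: since $\bu$ and $\bw$ lie on opposite sides of $\bv$, there is no single-boundary region bounded by $\bv$ containing both.

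The paper's characterization runs in the opposite direction. A vertex $v\in S_2\cup M_2$ is type $M_2$ if and only if there exist vertices $u,w$ (with no type restriction) spanning a triangle with $v$ such that \emph{exactly one} vertex spans a triangle with $u,w$ while failing to span an edge with $v$. The key geometric fact is that when $\bv$ is a multicurve one can choose $\bu,\bw$ on opposite sides differing from $\bv$ by a single pair of pants each (the paper carries out a case analysis on $|v|\ge 4$, $|v|=3$, $|v|=2$, using smallness of minimal vertices in the $|v|=3$ case); the only dividing set between $\bu$ and $\bw$ not nested with $\bv$ is then the one obtained by swapping these two pairs of pants across $\bv$. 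When $\bv$ is a single separating curve, the region between any $\bu$ and $\bv$ can never be an annulus, so the orbit of $\bv$ supplies infinitely many competing vertices. In short, the distinguishing feature is that $M_2$ admits neighbours $u,w$ \emph{arbitrarily close} to $v$, not that $S_2$ forces rigidity.
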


\begin{proof}
It follows from Lemmas \ref{LinSim2} and \ref{char1} that the sets $S_1$, $M_1$ and $S_2 \cup M_2$ form characteristic subsets of $\vC_{\partial A}(\Sigma)$.  It remains only to show that we can distinguish between type $M_2$ vertices and type $S_2$ vertices. We claim that a vertex $v$ is type $M_2$ if and only if;
\begin{enumerate}
\item there exist two vertices $u$ and $w$ that span a triangle with $v$, and 
\item there exists exactly one vertex that spans an triangle with $u$ and $w$ and that fails to span an edge with $v$.
\end{enumerate}

To prove the forward direction of the claim we assume $v$ is type $M_2$ and consider three cases separately; $|v| \ge 4$, $|v| = 3$, and $|v| = 2$.
In each case we will define vertices $u$ and $w$ that are on different sides of $v$.  Suppose $u,v,w$ correspond to the multicurves $\bu,\bv,\bw$ respectively.  In order to define the unique dividing set implicit in the claim we require that the (possibly connected) subsurface bounded by $\bu$ and $\bv$ is a (possibly empty) collection of annuli and a single pair of pants $P_u$.  We define a pair of pants $P_w$ related to the dividing set $\bw$ in the same way. Here, we go against convention slightly by defining a pair pants to be homeomorphic to either $\Sigma^3_{0,0}$ or $\Sigma^2_{0,1}$. Furthermore, we require that if a component curve of $\bv$ bounds $P_u$ (or $P_w$) it must bound an annulus with $\bw$ (or $\bu$).  The unique vertex spanning edges with $u$ and $w$ but not $v$ corresponds to the dividing set 
\[
\{\bu \cap \bw\} \cup \{\partial P_u \setminus \bv\} \cup \{\partial P_w \setminus \bv\}.
\]
An example is shown in Figure \ref{N=4}.
\begin{figure}[t]
\centering
\labellist \hair 1pt
	\pinlabel {$\bv$} at -30 325
	\pinlabel {$P_w$} at 380 380
	\pinlabel {$P_u$} at 900 190
    \endlabellist
\includegraphics[scale=0.2]{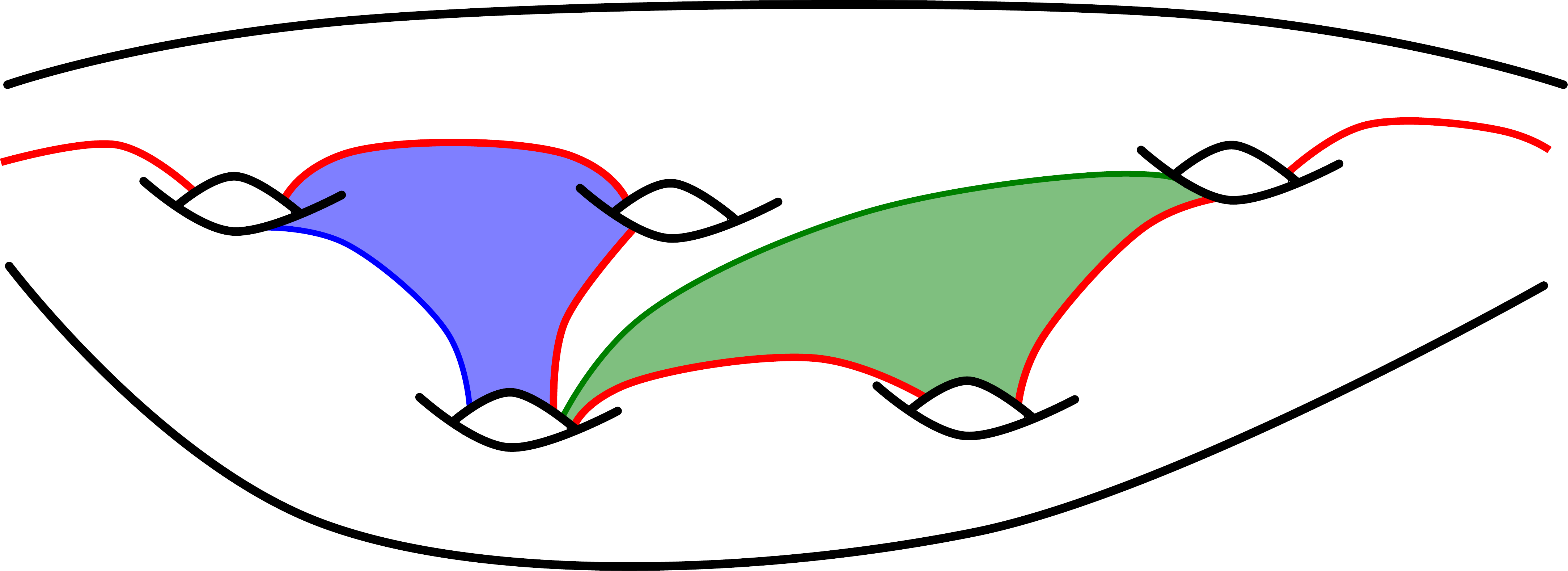}
\caption{The red multicurve is $\bv$. We see that we can construct the desired pairs of pants $P_u$ and $P_w$.}
\label{N=4}
\end{figure}
It follows that in order to prove the claim, hence the lemma, it requires to find the required pairs of pants $P_u, P_w$ with respect to a multicurve $\bv$.

First we consider the case where $|v| \ge 4$. The pair of pants $P_u$ will consist either of three boundary components, or two boundary components and a single puncture. Suppose that such a $P_u$ does not exist, then every dividing set $\bd$ nested with $\bv$ will be isotopic to $\bv$. This contradicts our assumption that $v$ is $2$-sided. Similarly, we can find a pair of pants $P_w$ satsifying the conditions above, see Figure \ref{N=4}.

Now let $| v | = 3$ and let $R_u$ and $R_w$ be the two associated regions of $\bv$ such that $g(R_w) \ge g(R_u)$. Suppose we can choose a dividing set $\bu$ in $R_u$ with four components, two of which are isotopic to distinct components of $\bv$. Since $v$ is $2$-sided we can find an appropriate choice of $\bw$ contained in $R_w$ where either $\bw$ has two components and $P_w$ is homeomorphic to $\Sigma_{0,0}^3$ or $\bw$ has three components and $P_w$ is homeomorphic to $\Sigma_{0,1}^2$. This is shown in Figure \ref{N=3} (i) and (ii), where $\bu$ is the dividing set on the right and $\bw$ is on the left.
\begin{figure}[h]
\centering
\labellist \hair 1pt
	\pinlabel {(i)} at 250 -50
	\pinlabel {(ii)} at 910 -50
	\pinlabel {(iii)} at 1580 -50
	\pinlabel {(iv)} at 2260 -50
	\pinlabel {$\bv$} at 320 360
	\pinlabel {$\bv$} at 980 360
	\pinlabel {$\bv$} at 1645 360
	\pinlabel {$\bv$} at 2310 360
    \endlabellist
\includegraphics[scale=0.15]{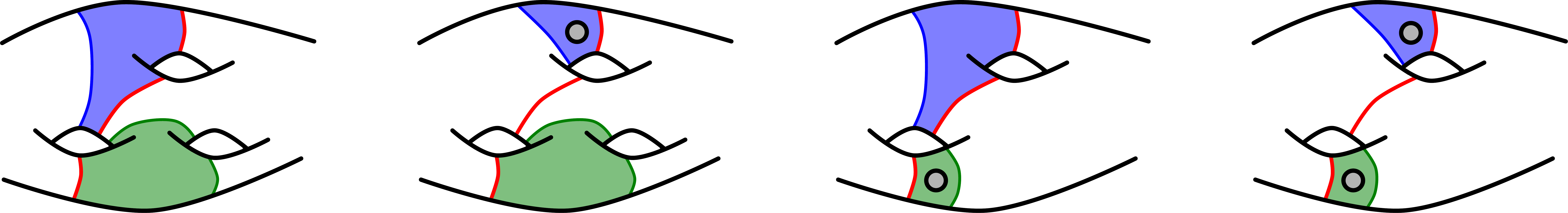}
\caption{The red multicurve with three components is $\bv$. We see that we can always construct desired pairs of pants $P_u$ and $P_w$.}
\label{N=3}
\end{figure}
Similarly, suppose we can choose $\bu$ with three boundary components, two of which belong to $\bv$ and where the region $P_u$ is homeomorphic to $\Sigma^2_{0,1}$. Once again, as $v$ is $2$-sided, there is an appropriate choice of $\bw$ in $R_w$. A picture can be seen in Figure \ref{N=3} (iii) and (iv), again $\bu$ is on the right and $\bw$ is the dividing set on the on the left.

If neither choice of $\bu$ exists it follows that there are no $1$-sided vertices of $\vC_{\partial A}(\Sigma)$ corresponding to dividing sets in $R_u$ with associated region of lower genus or fewer punctures than $R_u$.  We deduce that there is a $1$-sided vertex of $\vC_{\partial A}(\Sigma)$ corresponding to a dividing set $\bd$ such that
\[
g(R_u) \le g(\wh R_\bd) \le g(R_u) + 1 \eand n(\wh R_\bd) = n(R_u),
\]
where $\wh R_\bd$ is the enveloping region of the dividing set $\bd$.  Note that $\bd$ will have one or two components.  All $1$-sided vertices are minimal and so by assumption all $1$-sided vertices are small. From the definition of small, we have that there exist two vertices corresponding to dividing sets $\bd_1, \bd_2$ such that;
\begin{eqnarray*}
g(R_w) = g - g(R_u) - 2 &\ge& g(\wh R_\bd) + g(\wh R_{\bd_1}) + g(\wh R_{\bd_1}) + 1 - g(\wh R_\bd) - 2,
\\ &\ge& g(\wh R_{\bd_1}) + g(\wh R_{\bd_2}) - 1, \eand
\\ n(R_w) = n - n(R_u) &\ge& n(\wh R_\bd) + n(\wh R_{\bd_1}) + n(\wh R_{\bd_1}) + 1 - n(\wh R_\bd),
\\ &\ge& n(\wh R_{\bd_1}) + n(\wh R_{\bd_2}) + 1.
\end{eqnarray*}
Without loss of generality we can assume that $g(\wh R_{\bd_2}) \ge g(\wh R_{\bd_1})$.  From the first inequality we have that $g(R_w) \ge g(\wh R_{\bd_1})$ if $g(\wh R_{\bd_2}) \ge 1$.  However, if $g(\wh R_{\bd_2}) < 1$ then $g(\wh R_{\bd_1}) = 0 \le g(R_w)$.  From the second inequality it is clear that $n(R_w) \ge n(\wh R_{\bd_1}) + 1$.  We conclude that there exists an element $f \in \Mode(\Sigma)$ such that $f(\bd_1)$ is contained in $R_w$.  Moreover, there exists a dividing set $\bw$ in $R_w$ separating $f(\bd_1)$ and $\bv$ such that $\bw$ has an associated region $Q_w \subset R_w$ with three boundary components, where $g(Q_w)=g(R_w)$, and $n(Q_w) = n(R_w) - 1$, see Figure \ref{N=3}(iii).  We may now choose $\bu \subset R_u$ to have two components, as depicted in the left hand side dividing set of see Figure \ref{N=3}(iii).  This completes the proof in the case where $|v|=3$.

Now we deal with the case where $| v |=2$. If both associated regions of $\bv$ contain dividing sets $\bu$ and $\bw$ such that $P_u$ and $P_w$ are homeomorphic to $\Sigma_{0,1}^2$ then we are done. If this is not the case then since $v$ is of type $M_2$ there exists a vertex in $\vC_{\partial A}(\Sigma)$ spanning an edge with $v$ that is either of type $S_X$ or $S_2$. Any such vertex is not $1$-sided and so we can find a dividing set $\bu$ with three components, two of which are shared by $\bv$. As before we can therefore find the desired pairs of pants $P_u$ and $P_w$.

We now assume that $v$ is a vertex of type $S_2$. If $u$ and $w$ lie on the same side of $v$ then up to relabeling there are infinitely many vertices in the $\Mode (\Sigma)$-orbit of $v$ spanning edges with $u$ and $w$ but not with $v$. Suppose then that $u$ and $w$ lie on different sides of $v$. If the vertices $u,v$ correspond to the dividing sets $\bu,\bv$ then the subsurface bounded by these curves cannot be an annulus, as $\bv$ is a separating curve. It follows that there are infinitely many vertices in the $\Mode (\Sigma)$-orbit of $v$ spanning edges with $u$ and $w$ but not with $v$. This completes the proof.
\end{proof}

Finally we complete the proof that the vertices of $\vC_{\partial A}(\Sigma)$ corresponding to separating curves form characteristic subsets by distinguishing type $S_X$ vertices and type $M_X$ vertices.

\begin{lem}\label{char3}
Let $\vC_{\partial A}(\Sigma)$ be a complex of dividing sets.  If every minimal vertex of $\vC_{\partial A}(\Sigma)$ is small then the type $S_X$ vertices form a characteristic subset.
\end{lem}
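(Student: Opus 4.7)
The set $S_X \cup M_X$ is already a characteristic subset of $\vC_{\partial A}(\Sigma)$: being a $2$-sided vertex is determined by the link, and the condition that one full side of $v$ consists entirely of type $S_1$ vertices is automorphism-invariant because $S_1$ is characteristic by Lemma \ref{char1}. Hence the content of the lemma reduces to distinguishing the subcase $|v|=1$ (type $S_X$) from $|v|\ge 2$ (type $M_X$) inside this characteristic set, using only the combinatorics of $\vC_{\partial A}(\Sigma)$ together with the characteristic subsets produced in Lemmas \ref{char1} and \ref{char2}.

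The plan is to mirror the argument proving that $S_2$ is characteristic in Lemma \ref{char2}. I will show that $v$ is of type $M_X$ if and only if there exist vertices $u$ and $w$ spanning a triangle with $v$ such that exactly one other vertex spans a triangle with $u$ and $w$ while failing to span an edge with $v$. When this holds, the unique fourth vertex is built from the normal form of Lemma \ref{Normal} as the dividing set $\{\bu\cap\bw\}\cup\{\partial P_u\setminus\bv\}\cup\{\partial P_w\setminus\bv\}$, where $P_u$ and $P_w$ are the pair-of-pants regions extracted between $\bv$ and $\bu$, $\bw$ respectively. The key modification versus the proof of Lemma \ref{char2} is that $u$ and $w$ must be chosen so that the pair-of-pants constructions occur on the complicated side of $v$: the simple side consists entirely of $1$-sided separating curves and cannot host a two-boundary-component piece of a pair of pants. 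For $v$ of type $S_X$, $\bv$ is a single separating curve, so $\bu\cup\bv$ never bounds an annulus, and any configuration of this form admits infinitely many completions in the $\Mode(\Sigma)$-orbit of $v$, exactly as in the $S_2$ portion of Lemma \ref{char2}; uniqueness therefore fails.

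The main obstacle will be the low-complexity case $|v|=2$ with one component of $\bv$ bounding the simple side, together with the boundary cases of $|v|=3$. In these situations one must verify that the complicated associated region of $\bv$ is large enough to host pairs of pants $P_u$ and $P_w$ of the required topological type, with one component of $\bv$ bounding both. The smallness hypothesis enters here via an enveloping-region argument identical to the one at the end of the $|v|=3$ case in Lemma \ref{char2}: it guarantees the existence of auxiliary vertices on the complicated side whose enveloping regions have genus and puncture counts suitable for realizing $P_u$ and $P_w$ by a half-twist or Dehn-twist adjustment. A case split on $|v|$ (treating $|v|\ge 4$, $|v|=3$, and $|v|=2$ separately) paralleling the one in Lemma \ref{char2}, now with all dividing sets chosen inside the complicated associated region of $\bv$, should then complete the proof.
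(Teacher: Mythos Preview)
Your plan to transplant the ``unique fourth vertex'' criterion from Lemma~\ref{char2} does not go through, and the gap is precisely at the step you flag as the ``key modification''. If you place both $u$ and $w$ on the complicated side of $v$, then the same orbit argument the paper uses for type $S_2$ vertices applies: there is a region $Q$ bounded by (parts of) $\bu$ and $\bw$ that contains all of $\bv$ together with the entire simple side, and any mapping class supported in $Q$ fixes $\bu,\bw$ while moving $\bv$. This produces infinitely many vertices in the $\Mode(\Sigma)$--orbit of $v$ that span a triangle with $u$ and $w$ yet fail to span an edge with $v$, so uniqueness never holds. The criterion can only isolate a unique fourth vertex when $u$ and $w$ lie on \emph{different} sides of $v$; but for a type $M_X$ vertex the simple side contains only type $S_1$ vertices, i.e.\ single separating curves. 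The region between such a curve and $\bv$ has $|v|+1\ge 3$ boundary components, and for $|v|\ge 3$ it can never be a collection of annuli together with a single pair of pants, so the $P_u$ in your formula simply does not exist. Even when $|v|=2$ you would need an $S_1$ curve whose cobounding region with $\bv$ is exactly $\Sigma_{0,0}^3$, and nothing in the hypotheses guarantees such a curve lies in $\partial A$.

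The paper avoids this obstruction by a genuinely different route. It first isolates, inside $S_X\cup M_X$, the characteristic subset of \emph{genus separating} vertices (those whose complicated side has positive genus), detected by the existence of a type $S_2$ vertex and a type $M_2$ vertex with $u$ and the $M_2$ vertex on opposite sides of the $S_2$ vertex; smallness is used here to manufacture the required $M_2$ witness. Within this subset, $M_X$ is then characterised not by a fourth-vertex count but by adjacency in maximal linear simplices: $u$ is $M_X$ if and only if it is sequential with some type $M_2$ vertex $v$ in a maximal linear simplex, with no type $S_2$ vertex $w$ making $u,v,w$ sequential. This exploits the already-established characteristic sets $S_2$ and $M_2$ from Lemma~\ref{char2} rather than trying to rerun its proof.
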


\begin{proof}
From Lemmas \ref{LinSim2} and \ref{char1} we see that the subset of type $S_X$ and $M_X$ vertices forms a characteristic subset.
Let $u$ be either a type $S_X$ vertex or a type $M_X$ vertex.  Suppose $u$ corresponds to a dividing set with associated regions $R$ and $Q$ such that only type $S_1$ vertices correspond to dividing sets in $R$. We will call $u$ \emph{genus separating} if $g(Q) \ge 1$, see Figure \ref{M_2}(i).
\begin{figure}[t]
\centering
\labellist \hair 1pt
	\pinlabel {(i)} at 300 -25
	\pinlabel {(ii)} at 1200 -25
	\pinlabel {$\bu$} at 200 210
	\pinlabel {$\bv$} at 320 215
	\pinlabel {$\bw$} at 440 220
	\pinlabel {$\bu$} at 1035 355
	\pinlabel {$\bv$} at 1100 360
	\pinlabel {$\bw$} at 1290 350
    \endlabellist
\includegraphics[scale=0.25]{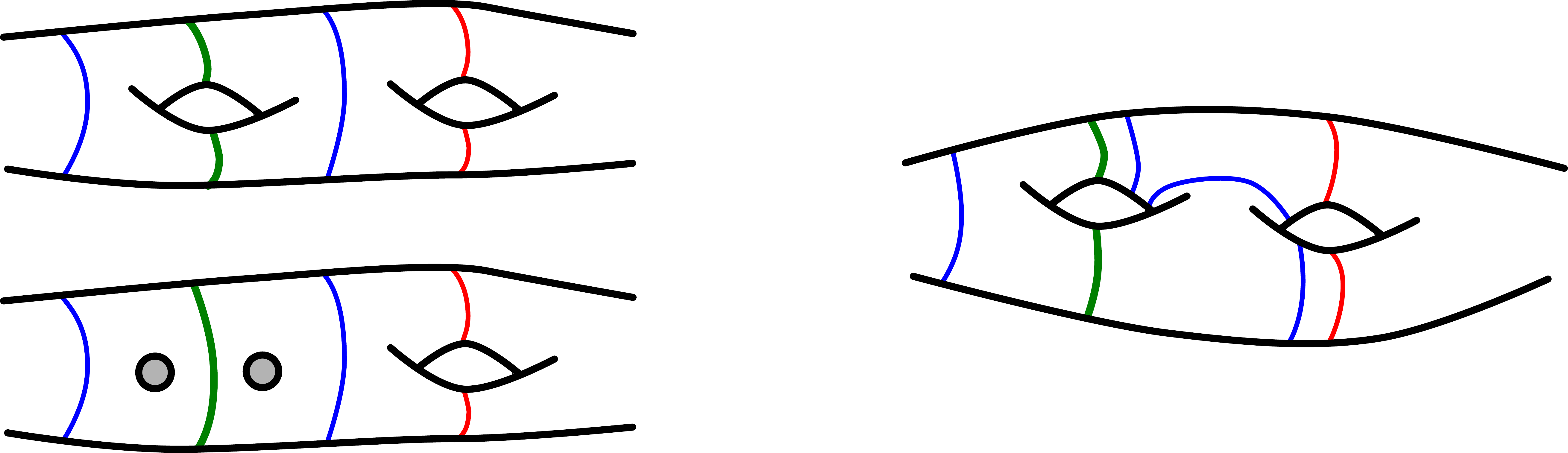}
\caption{(i) In each case the vertex corresponding to the dividing set $\bu$ is genus separating.  As such, we can find a curve $\bv$ with an associated region of positive genus that does not contain $\bu$.  (ii) The vertex corresponding to $\bu$ is of type $M_X$.  As it is genus separating we can find a dividing set $\bv$ with three components.}
\label{M_2}
\end{figure}
Note that if all minimal vertices are small, then all type $S_X$ vertices are genus separating.  We begin my showing that the subset of genus separating vertices forms a characteristic subset.  We claim that $u$ is genus separating if and only if there exists a type $S_2$ vertex $v$ and a type $M_2$ vertex $w$ such that
$u$ and $w$ lie on different sides of $v$.

To prove the claim, first assume that $u$ is genus separating and corresponds to the dividing set $\bu$.  We can define a curve $\bv$ such that $\bu$ and $\bv$ bound a region $P$ homeomorphic to $\Sigma_{0,0}^3$ or $\Sigma_{0,1}^2$ depending on whether $u$ is type $M_X$ or $S_X$ respectively.  Let $v$ correspond to $\bv$.  Now, since all $1$-sided vertices are minimal, and all minimal vertices are small, we have that there exist vertices of $\vC_{\partial A}(\Sigma)$ corresponding to dividing sets $\bd_1$ and $\bd_2$ contained in $Q$ such that
\[
g(Q) \ge g - g(R) - 1 \ge g(\wh R_{\bd_1}) + g(\wh R_{\bd_2}) \eand n(Q) \ge n - n(R) \ge n(\wh R_{\bd_1}) + n(\wh R_{\bd_2}) + 1
\]
if $u$ is type $M_X$. If $u$ is a type $S_X$ vertex then for $\bd_1$ and $\bd_2$ as above we have
\[
g(Q) \ge g - g(R) \ge g(\wh R_{\bd_1}) + g(\wh R_{\bd_2}) + 1 \eand n(Q) \ge n - n(R) - 1 \ge n(\wh R_{\bd_1}) + n(\wh R_{\bd_2}).
\]
In either case we conclude that there exists a type $M_2$ vertex $w$ corresponding to a dividing set that separates $\bd_i$ from $\bv$, for some $i \in \{1,2\}$.  Thus, we have that $u$ and $w$ lie on different sides of $v$.

Now assume that $u$ is not genus separating and let $v$ be a type $S_2$ vertex that spans an edge with $u$.  Suppose $v$ corresponds to a separating curve with associated region $Q_v \subset Q$.  If there exists a vertex $w$ as above then it must correspond to a dividing set contained in $Q_v$.  This implies that $g(Q) \ge g(Q_v) \ge 1$, a contradiction.

In order to prove the lemma we will show that type $M_X$ genus separating vertices form a characteristic subset. We claim that if $u$ is genus separating then $u$ is type $M_X$ if and only if there exists a type $M_2$ vertex $v$ such that;
\begin{enumerate}
\item the vertices $u,v$ are sequential in a maximal linear simplex, and
\item there is no type $S_2$ vertex $w$ such that $u,v,w$ are sequential in a maximal linear simplex.
\end{enumerate}
First we let $u$ be a vertex of type $M_X$.  Since $u$ is genus separating we can find a vertex that corresponds to a dividing set with three components, as shown in Figure \ref{M_2}(ii).  It is clear that there is no type $S_1$, $S_X$, or $S_2$ vertex $w$ such that $u,v,w$ are sequential in a maximal linear simplex.

If $u$ is type $S_X$ then since it is genus separating we can find a vertex $v$ that corresponds to a dividing set with two components.  We can then find a vertex $w$ corresponding to a separating curve such that $u,v,w$ are sequential in a maximal linear simplex.
\end{proof}

Combining Lemmas \ref{char1}, \ref{char2}, and \ref{char3} we have that the set veritces of $\vC_{\partial A}(\Sigma)$ 
corresponding to separating curves forms a characteristic subset.

\subsection{The case without annular dividing sets}
We can now prove Theorem \ref{etad} which states that if every minimal vertex of $\vC_{\partial A}(\Sigma)$ is small then the natural homomorphism
\[
\eta_{\partial A} : \Mode (\Sigma) \rightarrow \Aut \vC_{\partial A}(\Sigma)
\]
is an isomorphism.  We will make use of Theorem \ref{etas} from Section \ref{section_subcomplexes} and the results of Section \ref{vertex_types_DS}.

\begin{proof}[Proof of Theorem \ref{etad}]
By Lemma \ref{Injectivity} we have that $\eta_{\partial A}$ is injective. We want then to show that $\eta_{\partial A}$ is surjective. Let $\phi \in \Aut \vC_{\partial A}(\Sigma)$.  It follows from Lemmas \ref{char1}, \ref{char2}, and \ref{char3} that $\phi$ restricts to an automorphism $\widehat \phi$ of $\vC_{\vS(A)}(\Sigma)$. Here we think of $\vC_{\vS(A)}(\Sigma)$ as a full subcomplex of $\vC_{\partial A}(\Sigma)$.  All minimal vertices of $\vC_{\vS(A)}(\Sigma)$ are also minimal vertices of $\vC_{\partial A}(\Sigma)$ and so, by assumption, they are small.  By Theorem \ref{etas} there exists a mapping class $f \in \Mode (\Sigma)$ such that $\eta_{\vS(A)}(f)= \widehat \phi$. We need to show that $\eta_{\partial A}(f) = \phi$.

It suffices to show that an automorphism of $\vC_{\partial A}(\Sigma)$ restricting to the identity on $\vC_{\vS(A)}(\Sigma)$ must be the identity. To do this, we show by induction on the distance from a vertex to the subcomplex $\vC_{\vS(A)}(\Sigma)$.  Since $\vC_{\partial A}(\Sigma)$ is connected the result follows.

By assumption, the automorphism $\phi$ restricts to the identity for all vertices distance zero from $\vC_{\vS(A)}(\Sigma)$. Assume then that $\phi$ restricts to the identity for all vertices of $\vC_{\partial A}(\Sigma)$ distance $k$ from $\vC_{\vS(A)}(\Sigma)$. We deal with the inductive step separately for $1$-sided vertices and $2$-sided vertices.

Let $v$ be a $1$-sided vertex of $\vC_{\partial A}(\Sigma)$ that is distance $k+1$ from a vertex of $\vC_{\vS(A)}(\Sigma)$. Let $u$ be a vertex of $\vC_{\partial A}(\Sigma)$ spanning an edge with $v$ and distance $k$ from a vertex of $\vC_{\vS(A)}(\Sigma)$. Let $u,v$ correspond to $\bu,\bv$. There exist elements of the $\Mode(\Sigma)$-orbit of $\bu$ that fill the associated region of $\bv$ containing $\bu$. The vertex $v$ is $1$-sided, hence is the unique vertex whose link contains vertices corresponding to such dividing sets. It follows that $\phi$ must also fix $v$.

Assume now that $v$ is a $2$-sided vertex that is distance $k+1$ from $\vC_{\vS(A)}(\Sigma)$.  Let $u$ be a vertex of $\vC_{\partial A}(\Sigma)$ adjacent to $v$ that is distance $k$ from $\vC_{\vS(A)}(\Sigma)$.  Let $w$ be a $1$-sided vertex of $\vC_{\partial A}(\Sigma)$ that is not on the same side of $v$ as $u$.  It follows that $w$ is at most distance $k+1$ from $\vC_{\vS(A)}(\Sigma)$.  If $u,v,w$ correspond to $\bu,\bv,\bw$ then using similar methods to the previous step we can show that the orbits of $\bu$ and $\bw$ fill the associated regions of $\bv$.  As all distance $k$ vertices, and all $1$-sided, distance $k+1$ vertices are are fixed by $\phi$ we conclude that $v$ is also fixed by $\phi$, completing the proof.
\end{proof}

\section{Complexes of regions}\label{theproof}
In this section we will complete the resolution of the metaconjecture in the case of surfaces with punctures, that is we prove Theorem \ref{BigDaddy}.  A key step to this result is invoking Theorem \ref{etad} which we proved in the previous section.  We relate the complex of dividing sets $\vC_{\partial A}(\Sigma)$ and the complex of regions $\vC_A(\Sigma)$.  This is achieved by observing a bijection between the vertices of the complex $\vC_{\partial A}(\Sigma)$ and particular \emph{joins} in the complex $\vC_A(\Sigma)$.  This allows us to construct an injective homomorphism
\[
\partial : \Aut \vC_A(\Sigma) \rightarrow \Aut \vC_{\partial A}(\Sigma).
\]
We then consider the injective homomorphism $\eta_{\partial A}^{-1} \circ \partial \circ \eta_A$, where $\eta_{\partial A}$ is the isomorphism from Theorem \ref{etad}, and show that it is the identity of $\Mode(\Sigma)$.

\subsection{Types of join}
First we define a map
\[
\Phi : \Big \{\text{vertices of }\vC_{\partial A}(\Sigma) \Big \} \rightarrow \Big \{\text{subcomplexes of } \vC_A(\Sigma) \Big \}.
\]
Given a vertex $v$ of $\vC_{\partial A}(\Sigma)$ corresponding to a dividing set $\bv$, define $\Phi(v)$ to be the full subcomplex of $\vC_A(\Sigma)$ spanned by the vertices that correspond to regions contained in the associated regions of the dividing set $\bv$.

Recall that a subcomplex $X \subset \vC_A(\Sigma)$ is a \emph{join} if $X$ is spanned by disjoint subsets of vertices $V_1, \dots, V_m$, such that every vertex in $V_i$ spans an edge with every vertex in $V_j$ for all $i \ne j$. Assuming the subcomplex spanned by the vertices in $V_i$ is not itself a join for each $i$, we say that $X$ has $m$ \emph{join components}. If a join component consists of a solitary vertex we call it a \emph{singular component}, and a \emph{non-singular component} otherwise. We say that a join $X$ is \emph{$k$-sided} if $X$ has exactly $k$ non-singular components.

In the following three lemmas we show that the image $\Phi$ is characteristic in $\vC_A(\Sigma)$.  Roughly, as the naming convention suggests, we show that $k$-sided vertices of $\vC_{\partial A}(\Sigma)$ map to $k$-sided joins in $\vC_A(\Sigma)$.  While not strictly true, it is only in a distinct minority of cases where this intuition fails.  We split the vertices of $\vC_{\partial A}(\Sigma)$ into three types; \emph{strong} $2$-sided vertices, \emph{weak} $2$-sided vertices, and finally all $1$-sided vertices.

\subsection*{Strong $2$-sided vertices}

Recall from Section \ref{vertex_types_DS} that a vertex $v$ of $\vC_{\partial A}(\Sigma)$ is $2$-sided if there are vertices of $\Lk(v)$ that lie on different sides.  We call a $2$-sided vertex of $\vC_{\partial A}(\Sigma)$ \emph{strong} if there are infinitely many vertices on each of its sides, otherwise we call it \emph{weak}.

Note that all $2$-sided vertices are strong, unless one of the associated regions is homeomorphic to either $\Sigma_0^3$ or $\Sigma_{0,1}^2$, \emph{and} annular dividing sets are represented in $\partial A$. Furthermore, annular dividing sets are represented in $\partial A$ if and only if non-separating annuli are represented in $A$.

We begin by characterising the image of all strong $2$-sided vertices of $\vC_{\partial A}(\Sigma)$ under the map $\Phi$. To that end, we say that a $2$-sided join $X$ in $\vC_A(\Sigma)$ is \emph{maximal} if there is no vertex $z$ in $\vC_A(\Sigma) \sm X$ such that $X \cup \{ z \}$ spans a $2$-sided join.

\begin{lem}\label{strong}
The restriction of the map
\[
\Phi : \Big \{ \text{strong $2$-sided vertices of }\vC_{\partial A}(\Sigma) \Big \} \to \Big \{ \text{maximal $2$-sided joins of } \vC_A(\Sigma) \Big \}
\]
is a bijection.
\end{lem}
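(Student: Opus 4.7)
The plan is to construct an inverse of $\Phi$ restricted to strong $2$-sided vertices by reading a dividing set off the topology of a maximal $2$-sided join. First, I would show that for every strong $2$-sided vertex $v$ with dividing set $\bv$ and associated regions $R_1, R_2$, the image $\Phi(v)$ really is a maximal $2$-sided join. The vertex set of $\Phi(v)$ splits as $V_1 \sqcup V_2$, where $V_i$ consists of vertices whose regions lie in $R_i$, and every region in $R_1$ is disjoint from every region in $R_2$, so $\Phi(v) = V_1 * V_2$ as a join. Strong $2$-sidedness guarantees infinitely many vertices on each side of $v$ in $\vC_{\partial A}(\Sigma)$, and one uses this to exhibit in each $R_i$ a pair of essentially intersecting regions represented in $A$, so that each $V_i$ is non-singular and is not itself a join. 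For maximality, any vertex $z \in \vC_A(\Sigma)$ extending $\Phi(v)$ to a larger $2$-sided join must commute with every vertex of one side, say $V_1$; since the regions in $V_1$ fill $R_1$, the region $R_z$ must lie in $R_2$, and hence $z \in V_2 \subset \Phi(v)$.

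Next I would verify the two directions of the bijection. For injectivity, if $\Phi(u) = \Phi(v)$ with $u, v$ both strong $2$-sided, then the join decomposition $V_1 * V_2$ is essentially unique since each factor is not itself a join, so the unordered pair of associated regions agrees for $u$ and $v$; a dividing set is determined by its associated regions, giving $u = v$. For surjectivity, given a maximal $2$-sided join $X = V_1 * V_2$, let $U_i$ be the union of pairwise disjoint representatives of the regions in $V_i$. Disjointness of the join forces $U_1 \cap U_2 = \emptyset$. The essential boundary components of a regular neighbourhood of $U_1$ that separate it from $U_2$ form a multicurve $\bv$; maximality of $X$ then forces $\bv$ to be a dividing set splitting $\Sigma$ into two associated regions, each containing a region represented in $A$, so $\bv \in \partial A$ and the associated vertex $v$ is strongly $2$-sided with $\Phi(v) = X$.

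The main obstacle is the surjectivity step: showing that the multicurve extracted from the join genuinely divides $\Sigma$ into exactly two components, that it contains no stray non-essential curves, and that no components of the true dividing set have been missed. Maximality of the join is the key ingredient here, and the hypothesis that $v$ is strong (not merely $2$-sided) is precisely what rules out the degenerate configurations --- a pair of pants or once-punctured annulus side when annular dividing sets are represented in $\partial A$ --- that would otherwise cause the extraction procedure to fail and which will be handled separately in the weak case.
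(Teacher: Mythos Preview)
Your approach is essentially the same as the paper's, but you have overlooked a subtlety that creates a genuine gap. You decompose $\Phi(v)$ as $V_1 * V_2$ and assert that each $V_i$ is not itself a join; likewise, in the surjectivity step you write a maximal $2$-sided join as $X = V_1 * V_2$. But a $2$-sided join, by definition, has exactly two \emph{non-singular} join components and may have additional \emph{singular} ones. Concretely: if $A$ contains annuli whose core curves are isotopic to components of the dividing set $\bv$, then the corresponding vertex is adjacent to every other vertex of $\Phi(v)$ and so forms its own singular join component. Your sets $V_1, V_2$ are then not even a well-defined disjoint partition (such a peripheral annulus lies in both $R_1$ and $R_2$ up to homotopy), and whichever side you assign it to, that $V_i$ decomposes further as a join, falsifying your claim that $\Phi(v)$ has exactly two join components.

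The paper handles this by using a three-part decomposition $\Phi(v) = V_L * V_M * V_R$, where $V_M$ collects precisely the annular vertices peripheral in both associated regions; each vertex of $V_M$ is a singular component, while $V_L$ and $V_R$ are the two non-singular ones. The same refinement is needed in the surjectivity direction: given a maximal $2$-sided join $X = V_1 * V_2 * \cdots * V_m$ with $V_1, V_2$ the non-singular components, the paper argues that each singular $V_i$ (for $i>2$) must correspond to an annulus whose boundary components are isotopic to boundary components of the filled subsurfaces $R_1, R_2$, and that the complement of $\bigcup R_i$ in $\Sigma$ consists only of annuli. This is exactly what pins down $\partial R_1$ as a genuine dividing set. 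Your regular-neighbourhood extraction would need the same analysis of the singular components to succeed; once you insert the $V_M$ bookkeeping, the rest of your outline goes through as in the paper.
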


\begin{proof}
We must first show that this map makes sense, that is, for any strong $2$-sided vertex $v \in \vC_{\partial A}(\Sigma)$ the subcomplex $\Phi(v)$ is a maximal $2$-sided join in $\vC_A(\Sigma)$. Let $v$ correspond to the dividing set $\bv$ and suppose $L$ and $R$ are the two associated regions of $\bv$. We write $V_L$ for the subcomplex spanned by vertices corresponding to non-peripheral regions of $L$. We define $V_M$ to be the subcomplex spanned by peripheral regions of $L$ (and $R$) and define $V_R$ analogously to $V_L$. Now, every vertex in $\Phi(v)$ is contained in either $V_L$, $V_M$, or $V_R$. Furthermore every vertex of $V_L$ spans an edge with every vertex of $V_M$ and $V_R$. The same is true for $V_M$ and $V_R$ and so $\Phi(v) = V_L * V_M * V_R$, a join. By definition of a strong $2$-sided vertex, $L$ and $R$ are filled by regions represented in $A$. It follows that $V_L$ and $V_R$ are non-singular join components of $\Phi(v)$. Furthermore, it is clear that each vertex of $V_M$ spans an edge with every other vertex of $V_M$, and so $\Phi(v)$ is a $2$-sided join with $|V_M|$ singular components.

Suppose $\Phi(v)$ is not a maximal $2$-sided join. Then there exists a vertex $z$ not in $\Phi(v)$ such that $\Phi(v) \cup \{ z \}$ spans a $2$-sided join. Every vertex that is not in $\Phi(v)$ corresponds to a region that intersects both $L$ and $R$. It follows that the subcomplex spanned by $V_L$, $V_R$, and the vertex $z$ is not a join.  This implies that the subcomplex spanned by $\Phi(v) \cup \{z\}$ is not $2$-sided, which is a contradiction.  It follows that $\Phi(v)$ is indeed maximal.

It remains to show that all maximal $2$-sided joins of $\vC_A(\Sigma)$ are of this form. Let $X = V_1 * V_2 * \dots * V_m$ be a such a join, where $V_1$ and $V_2$ are the two non-singular components. Each $V_i$ corresponds to a subsurface $R_i$ of $\Sigma$, that is, the vertices in $V_i$ correspond to regions that fill $R_i$. Now, both $R_1$ and $R_2$ are non-separating and the complement of $\{ R_i \}_{i=1}^m$ must be a collection of annuli, as otherwise $X$ is not be maximal. Now, for $i >2$ each component $V_i$ is a single vertex. If this vertex does not correspond to an annulus then we can find a region represented in $A$ that intersects $R_i$ and either $R_1$ or $R_2$. The subcomplex spanned by $X$ and a vertex corresponding to this region is $2$-sided join and so $X$ is not maximal, a contradiction. Similarly, it must be that each annulus $R_i$, for $i >2$, has boundary components that are isotopic to boundary components of $R_1$ and $R_2$. It follows then $R_1$ has boundary components that are isotopic to a $2$-sided dividing set in $\partial A$.
\end{proof}

\subsection*{Weak $2$-sided vertices}

We now move on to the weak $2$-sided vertices of $\vC_{\partial A}(\Sigma)$. Recall that these only occur when one of the associated regions is a pair of pants or a punctured annulus, and non-separating annuli are represented in $A$.

Suppose $X$ is a $1$-sided join with more than two join components, that is, one non-singular component and at least two singular components. Let $u,w \in X$ be two such singular components. We say that $X$ is a \emph{filling} join if there are no vertices $x,y \in \vC_A(\Sigma)$ such that $\{ u, w, x, y \}$ spans a square. We call a filling join $X$ \emph{maximal} if there exist no vertex $z$ in $\vC_A(\Sigma) \sm X$ such that $X \cup \{ z \}$ spans a filling join or a $2$-sided join.

\begin{lem}\label{weak}
If the complex $\vC_A(\Sigma)$ has no corks then the restriction of the map
\[
\Phi : \Big \{ \text{weak $2$-sided vertices of }\vC_{\partial A}(\Sigma) \Big \} \to \Big \{ \text{maximal filling joins of } \vC_A(\Sigma) \Big \}
\]
is a bijection.
\end{lem}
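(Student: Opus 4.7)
The plan is to mirror the proof of Lemma \ref{strong}, substituting ``maximal filling join'' for ``maximal $2$-sided join'' and keeping careful track of where the no-corks hypothesis enters. First I would verify well-definedness: let $v$ be a weak $2$-sided vertex of $\vC_{\partial A}(\Sigma)$ corresponding to a dividing set $\bv$, with associated regions $L$ and $R$. The definitions of weak and $2$-sided force, say, $R \cong \Sigma^3_{0,0}$ or $\Sigma^2_{0,1}$ with annular dividing sets represented in $\partial A$, and consequently $|\bv| \geq 2$. Exactly as in Lemma \ref{strong}, the subcomplex $\Phi(v)$ decomposes as a join $V_L * V_M$, where $V_L$ is the non-singular component of non-peripheral regions inside $L$ and $V_M$ consists of the peripheral annuli around components of $\bv$; this is a $1$-sided join with $|V_M| \geq 2$ singular components. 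The no-corks hypothesis enters here to prevent, for instance, the full subsurface $R$ from being represented in $A$ as the cork-mate of a peripheral annulus in $V_M$, which would attach as an extra singular component and spoil the maximality of $\Phi(v)$.

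Next I would verify the filling and maximality conditions. The filling condition --- that for any two singular components $u,w \in V_M$ there is no pair $x,y$ making $\{u,w,x,y\}$ span a square --- follows by checking that the only regions common to the links of two peripheral annuli come from $V_L \cup V_M$ itself, and these satisfy the internal adjacencies of the join rather than the anti-adjacency required by a square. Maximality then follows because any vertex $z \notin \Phi(v)$ corresponds to a region meeting $\bv$ essentially, and inserting $z$ either introduces a second non-singular component (making the enlarged join $2$-sided) or creates a genuine square with two singular components of $V_M$ (destroying the filling condition).

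Injectivity is immediate since $\bv$ is recovered from $\Phi(v)$ as the boundary of the subsurface carrying $V_L$. For surjectivity, given a maximal filling join $X = V_1 * \{u_2\} * \cdots * \{u_m\}$ with $m \geq 3$, I would show the vertices of $V_1$ fill some subsurface $L \subset \Sigma$, and each singular $u_i$ is forced to be a peripheral annulus parallel to a component of $\partial L$; the no-corks hypothesis rules out exotic alternatives in which a singular component plays the role of the cork-mate of some annulus. The complementary region $R$ must then be $\Sigma^3_{0,0}$ or $\Sigma^2_{0,1}$ by maximality, so $\bv := \partial L$ yields a weak $2$-sided vertex $v$ with $\Phi(v) = X$. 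The principal obstacle will be the careful combinatorial analysis of the ``square'' condition --- in particular, which common neighbors of two singular components actually produce a square, and how the no-corks hypothesis ensures the resulting picture is both forced and tight.
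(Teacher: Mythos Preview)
Your overall strategy matches the paper's, but there is a genuine error in how you invoke the no-corks hypothesis. You claim it prevents the small associated region (your $R$, the paper's $L$, homeomorphic to $\Sigma_0^3$ or $\Sigma_{0,1}^2$) from being represented in $A$, so that the singular components of $\Phi(v)$ are exactly the peripheral annuli. This is not what the cork condition says, and the paper explicitly allows the small region to be represented in $A$, contributing one additional (non-annular) singular component to $\Phi(v)$. That extra vertex must be carried through both the filling check and the surjectivity argument; in particular, a maximal filling join may have one non-annular singular component, corresponding precisely to this small region, and your surjectivity sketch --- which forces every singular $u_i$ to be a peripheral annulus --- would fail to recognise such joins.

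The no-corks hypothesis is actually used only in surjectivity, and more narrowly than you indicate: after bounding the number of boundary components of the complementary region of the big side, one must rule out the case of a single boundary component, since then the peripheral annulus together with that region would form a cork pair.

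There is also a smaller confusion in your filling argument. You analyse vertices ``common to the links of two peripheral annuli $u,w$'', but $u$ and $w$ are adjacent (both are singular components of the join), so in any square on $\{u,w,x,y\}$ they occupy \emph{adjacent} corners, not opposite ones. The relevant $x$ lies in $\Lk(w)\setminus\Lk(u)$ and $y$ in $\Lk(u)\setminus\Lk(w)$, with $x\sim y$; the paper rules this configuration out by checking directly inside the pair of pants that any region meeting one boundary annulus and missing the other must intersect every region doing the reverse.
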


\begin{proof}
Let $v$ be a weak $2$-sided vertex of $\vC_{\partial A}(\Sigma)$ corresponding to the dividing set $\bv$. Let $L$ be the associated region of $\bv$ region homeomorphic to either $\Sigma_0^3$ or $\Sigma_{0,1}^2$ and let $R$ be the other associated region of $\bv$. Let $V_R$ be the subcomplex spanned by vertices corresponding to regions contained in $R$. As in the proof of Lemma \ref{strong}, let $V_M$ be the subcomplex spanned by the vertices corresponding to annuli with boundary components isotopic to boundary components of $R$. Finally, define $V_L$ to be the possibly empty subcomplex consisting of the single vertex corresponding to $L$. Note that $L$ contains no other regions represented in $A$. It follows then that $\Phi(v)$ is equal to the join $V_L * V_M * V_R$ and that each vertex in $V_M$ spans an edge with all other vertices in $\Phi(v)$. Furthermore, $V_R$ contains infinitely many vertices and is not a join, so $\Phi(v)$ is a $1$-sided join in $\vC_A(\Sigma)$. Now, let $u$ and $w$ be any two distinct vertices of $V_L \cup V_M$ corresponding to regions $Q_u , Q_w \subseteq L$. If $Q_u$ is homotopic to $L$ then there is no vertex $z$ that spans an edge with both $u$ and $w$.  If $Q_u$ and $Q_w$ are annuli and a region $Q_x$ intersects $Q_u$ and not $Q_w$, then every region $Q_y$ that interscts $Q_w$ must also intersect either $Q_u$ or $Q_x$, see Figure \ref{filling_join}(i).
\begin{figure}[t]
\centering
\labellist \hair 1pt
	\pinlabel {(i)} at 300 -30
	\pinlabel {(ii)} at 1340 -40
	\pinlabel {$Q_u$} at 270 530
	\pinlabel {$Q_x$} at 535 420
	\pinlabel {$Q_w$} at 370 130
    \endlabellist
\includegraphics[scale=0.2]{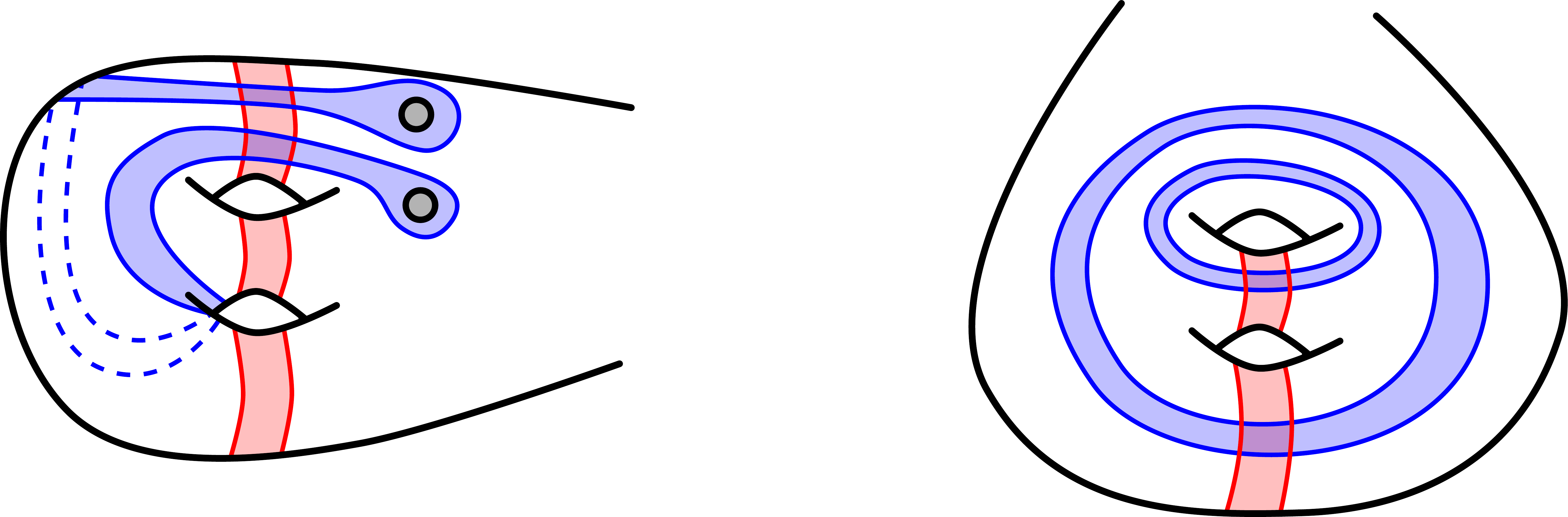}
\caption{(i) Any region that essentially intersects $Q_w$ must intersect either $Q_u$ or $Q_x$.  (ii) The vertices corresponding to the regions shown span a square in a complex of regions.}
\label{filling_join}
\end{figure}
It follows that there are no vertices $x,y$ that span a square with $u,w$, hence $\Phi(v)$ is a filling join of $\vC_A(\Sigma)$.

Suppose now that $\Phi(v)$ is not maximal. Then there exists a vertex $z$ not in $\Phi(v)$ that spans a filling join or a $2$-sided join with $\Phi(v)$. However, every vertex that is not in $\Phi(v)$ also fails to span an edge with one of the vertices in $V_M$. If $L \cong \Sigma_{0,1}^2$ then it follows that if $ X := \Phi(v) \cup \{ z \}$ spans a join, it must span a $1$-sided join with a sole singular join component. In particular $X$ is neither a filling join nor a $2$-sided join, hence $\Phi(v)$ is maximal. Suppose then that $L \cong \Sigma_0^3$. As above, we see that $X$ cannot be $2$-sided and so we assume that $X$ is a filling join. By definition, $X$ must have two singular join components $u,w \in \vC_A(\Sigma)$ corresponding to non-separating annuli.  Furthermore, the complement of two such annuli in $\Sigma$ is a single connected region. We can therefore find vertices $x,y \in \vC_A (\Sigma)$ corresponding to annuli that span a square with $u$ and $w$, see Figure \ref{filling_join}(ii).  It follows that $X$ is not a filling join, hence $\Phi(v)$ is maximal.

It remains to show that all maximal filling joins in $\vC_A(\Sigma)$ are induced by weak $2$-sided vertices of $\vC_{\partial A}(\Sigma)$. Let $X = V * v_1 * \dots * v_m$ be such a join where $V$ is the sole non-singular join component, and each $v_i$ is a vertex in $\vC_A(\Sigma)$. Suppose the subcomplex $V$ is spanned by regions in the subsurface $R$. Since $X$ is $1$-sided, $R$ is connected. Furthermore, since $X$ is maximal, $R$ is non-separating. Suppose both $v_1$ and $v_2$ do not correspond to annuli. There exists a region represented in $A$ that is disjoint from $R$ and intersects both of these non-annular regions. It follows that there exists a vetex $z \in \vC_A(\Sigma) \sm X$ that spans a $2$-sided join with $X$, which is a contradiction. Thus, we have shown that at most one singular component of $X$ corresponds to a region other than an annulus. If $v_1$ corresponds to annulus that is not peripheral in $R$ then the $\Mode(\Sigma)$-orbit of this annulus fills the complementary region of $R$. As above, this contradicts the maximality of $X$.

Suppose now that there are at least four singular components of $X$ that correspond to annuli. Any region with four boundary components contains a non-peripheral annulus. Since $R$ is non-separating, it has a unique complementary region $L$ with at least four boundary components. Any such region is filled by non-separating annuli hence $X$ is not maximal. We have therefore proven that there are at most three singular vertices of $X$ that correspond to annuli.  Similar to the above argument, if $L$ has three boundary components and contains a puncture then it is filled by non-separating annuli, hence $X$ is not maximal. If $Q$ has two boundary components it must contain a puncture, otherwise $X$ would only have one singular join component (corresponding to the annulus).  If $L$ has one boundary component then $\vC_A(\Sigma)$ contains corks.  It follows that $Q$ is either a pair of pants or a punctured annulus, completing the proof.
\end{proof}

\subsection*{All $1$-sided vertices}

Finally, we deal with the $1$-sided vertices of $\vC_{\partial A}(\Sigma)$. If a $1$-sided join $X$ has two join components, that is, one singular component and one non-singular component, we call it \emph{perfect}. A perfect join $X$ is \emph{maximal} if there exist no vertices $z$ in $\vC_A(\Sigma) \sm X$ such that $X \cup \{ z \}$ spans a join.

\begin{lem}\label{allone}
If the complex $\vC_A(\Sigma)$ has no holes and no corks then the restriction of the map
\[
\Phi : \Big \{ \text{$1$-sided vertices of }\vC_{\partial A}(\Sigma) \Big \} \to \Big \{ \text{maximal perfect joins of } \vC_A(\Sigma) \Big \}
\]
is a bijection.
\end{lem}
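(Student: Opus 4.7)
Following the template of Lemmas \ref{strong} and \ref{weak}, the plan is to show that $\Phi$ maps each $1$-sided vertex of $\vC_{\partial A}(\Sigma)$ to a maximal perfect join and conversely that every maximal perfect join arises from some $1$-sided vertex, with the two correspondences being inverse to each other.

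For the forward direction, I would let $v$ be a $1$-sided vertex corresponding to a dividing set $\bv$ with associated regions $L$ and $R$, choosing the labelling so that no dividing set of $\partial A$ is properly contained in $L$ (this is the concrete meaning of $1$-sidedness). Writing $V_L$, $V_R$, $V_M$ for the subcomplexes of $\Phi(v)$ spanned by vertices corresponding respectively to regions of $A$ contained in $L$, in $R$, and to peripheral annuli with boundary isotopic to a component of $\bv$, one has $\Phi(v) = V_L * V_M * V_R$. The first step is to check that $V_R$ is non-singular: since $\bv \in \partial A$ and all $\partial A$-activity is concentrated on the $R$-side, the subsurface $R$ is filled by regions of $A$, producing a non-singular join component exactly as in the proof of Lemma \ref{strong}. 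The second, and essential, step is to show that $V_L \cup V_M$ consists of exactly one vertex. If it contained two vertices corresponding to disjoint regions of $A$ in $L$, then a multicurve separating them inside $L$ would extend to a dividing set in $\partial A$ strictly nested inside $L$, violating $1$-sidedness; and if it contained two vertices with one region strictly contained in the other, then the larger region would have no proper $A$-subsurface, making the annulus around $\bv$ a cork, contradicting the hypothesis. Thus $\Phi(v)$ is a perfect join, and its maximality follows as in Lemma \ref{strong}: any vertex outside $\Phi(v)$ corresponds to a region essentially intersecting both $L$ and $R$, so it cannot be added to the join.

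For the backward direction, I would take a maximal perfect join $X = V * \{u\}$ in $\vC_A(\Sigma)$, let $R$ be the subsurface filled by the regions whose vertices span $V$, and let $Q$ be the region corresponding to $u$. Maximality of $X$ forces $R$ and $Q$ to have disjoint interiors whose union, together with a possible collection of peripheral annuli, fills $\Sigma$; the no-holes hypothesis ensures that $Q$ itself is non-trivial with $R$ providing a nonempty $A$-complement. The common boundary of $R$ and $Q$ then defines a dividing set $\bv \in \partial A$, yielding a vertex $v \in \vC_{\partial A}(\Sigma)$ with $\Phi(v) = X$. One verifies that $v$ is $1$-sided: any dividing set of $\partial A$ strictly nested on the $Q$-side would produce a further vertex on the singular side of $\Phi(v)$, contradicting the uniqueness of $u$. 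Injectivity of $\Phi$ is automatic, as $\bv$ is recoverable from $\Phi(v)$ as the common boundary of the singular and the non-singular join components.

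The main obstacle will be the rigidity argument in the forward direction, namely that $V_L \cup V_M$ reduces to a single vertex: this is where the no-cork hypothesis is indispensable, in exactly the configuration where $L$ itself is the unique $A$-region contained in $L$. A secondary difficulty is cleanly separating the $1$-sided case from the weak $2$-sided case of Lemma \ref{weak}, particularly when $u$ is an annular vertex, where one must ensure that no auxiliary peripheral annulus contributes an additional singular component (which would promote the perfect join to a filling join and correspondingly force $v$ to be weak $2$-sided rather than $1$-sided).
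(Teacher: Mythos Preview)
Your overall strategy matches the paper's: show $\Phi$ sends each $1$-sided vertex to a maximal perfect join and that every maximal perfect join arises this way. The backward direction and the maximality check are essentially fine. The forward direction, however---reducing $V_L \cup V_M$ to a single vertex---contains a genuine error and a genuine omission.

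The error is in your case (B). If $Q_1 \subsetneq Q_2 \subseteq L$ are both represented in $A$, then the larger region $Q_2$ \emph{does} have a proper $A$-subsurface, namely $Q_1$, so your cork conclusion does not follow. Your dichotomy ``disjoint versus nested'' also omits the case of two $A$-regions in $L$ that intersect without one containing the other; this too must be ruled out.

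The omission is the no-holes hypothesis, which you never invoke in the forward direction but which is essential. The paper does not argue by comparing two hypothetical $A$-regions in $L$; instead it analyses a \emph{single} $A$-region $Q \subset L$ and shows $\partial Q = \bv$. First, $1$-sidedness forces $\bv \subseteq \partial Q$: if some component of $\bv$ were not a boundary component of $Q$, the boundary of the component of $\Sigma \setminus Q$ containing $R$ would be a $\partial A$-dividing set properly nested on the $L$-side of $\bv$. Second, no-holes forces $\partial Q \subseteq \bv$: a boundary component of $Q$ essential in $L$ bounds a complementary region of $Q$ inside $L$; if that region contained an $A$-subregion, $1$-sidedness would fail, and if not, $Q$ would be a hole. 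Hence $\partial Q = \bv$, so $Q$ is homotopic to $L$ or (when $\bv$ is a single separating curve) to the collar annulus of $\bv$. Only now does no-corks enter, and only in this last configuration: if both $L$ and the collar annulus were in $A$, the annular vertex would be a cork; so at most one of them is, and $V_L \cup V_M$ is a single vertex.
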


\begin{proof}
We begin by showing that $\Phi(v)$ is a maximal perfect join if $v$ is a $1$-sided vertex corresponding to the dividing set $\bv$. Let $L$ and $R$ be the two associated regions of $\bv$ such that $L$ does not contain any non-homotopic dividing sets. By the definition of $\partial A$, this implies that if $Q$ is a region in $L$ that is represented in $A$ then $\bv \subseteq \partial Q$. Since the complex $\vC_A(\Sigma)$ does not contain any holes, it must be that $\bv = \partial Q$.  This implies that either $Q$ and $L$ are homotopic, or $Q$ is peripheral in $L$.  If $Q$ is peripheral then it may also be homotopic to $L$, if it is a non-separating annulus.  Otherwise, $L$ is not represented in $A$, as $\vC_A(\Sigma)$ has no corks.  In any of these cases we have that $Q$ is the sole region contained in $L$ that is represented in $A$ and so $\Phi(v)$ is a perfect join. To see that it is maximal we note that any vertex $z$ not in $\Phi(v)$ cannot span an edge with the singular component of $\Phi(v)$, hence $\Phi(v)$ and $z$ do not span a join.

It remains to show that every maximal perfect join $X = V * u$ is of this form. Let $V$ correspond to the region $R$. Since $X$ is a maximal perfect join, $R$ must be a connected non-separating subsurface. Let $L$ be the complementary region of $R$. Suppose $\vC_{\partial A}(\Sigma)$ contains a vertex that correspond to a dividing that is not homotopic the boundary of $L$.  In this case $L$ must contain more than one region represented in $A$. It follows that we can find a vertex of $\vC_A(\Sigma)$ that is not in $X$ yet spans a join with $X$. This contradicts the maximality of $X$.  It follows that the only vertices of $\vC_{\partial A}(\Sigma)$ that corresponds to dividing sets in $L$ are those which are homotopic to its boundary, that is, $X$ is the image of a $1$-sided dividing set.
\end{proof}

\subsection{Completing the proof}

As a consequence of Lemmas \ref{strong}, \ref{weak}, and \ref{allone} we have that an automorphism of $\vC_A(\Sigma)$ induces an automorphism on the vertices of $\vC_{\partial A}(\Sigma)$. We will now show that this automorphism extends to an automorphism of the entire complex. To that end, we say that a subcomplex $V$ of $\vC_A(\Sigma)$ is \emph{compatible} with a subcomplex $W$ if $V=V_1 * V_2$ where $V_1$ is not empty and $V_1 \subseteq W$ \cite[Section 5]{BM17}. We can now state the following results of Brendle-Margalit. These facts are vital in proving Theorem \ref{BigDaddy}.

\begin{lem}[Brendle-Margalit]\label{partial}
Let $u$ and $v$ be vertices of the connected complex of dividing sets $\vC_{\partial A}(\Sigma)$. Then $u$ and $v$ span an edge if and only if $\Phi(u)$ is compatible with $\Phi(v)$.
\end{lem}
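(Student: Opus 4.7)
For the forward direction, the plan is a direct construction. Assuming $\bu$ is contained in an associated region $L_v$ of $\bv$, I set $L_u$ to be the associated region of $\bu$ contained in $L_v$ and $R_u$ the other associated region of $\bu$. I define $V_1$ and $V_2$ as the full subcomplexes of $\Phi(u)$ spanned, respectively, by vertices whose regions lie in $L_u$ and in $R_u$. Then $\Phi(u) = V_1 * V_2$ because regions on opposite sides of $\bu$ are disjoint up to isotopy; $V_1 \subseteq \Phi(v)$ because $L_u \subseteq L_v$; and $V_1 \neq \emptyset$ since $\bu \in \partial A$ guarantees that $L_u$ contains a region represented in $A$.

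For the backward direction, suppose $\Phi(u) = V_1 * V_2$ with $V_1 \neq \emptyset$ and $V_1 \subseteq \Phi(v)$. Since no annular dividing sets are represented in this section, there are no weak $2$-sided vertices by Lemma \ref{weak}, so $u$ is either strong $2$-sided or $1$-sided, and Lemmas \ref{strong} and \ref{allone} describe $\Phi(u)$ explicitly as a join (of the form $V_L * V_R$ or $V_R * \{u_\ast\}$ respectively, in the notation of those lemmas). My strategy is to show that in any valid decomposition the factor $V_1$ must contain an entire non-singular component, $V_L$ or $V_R$, on one side of $\bu$; combined with $V_1 \subseteq \Phi(v)$ and the fact that these regions collectively fill that side, this forces $\bv$ to lie in the opposite side of $\bu$, which is precisely the nestedness condition. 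The $1$-sided case is immediate: the only nonempty options for $V_1$ are $V_R$, $\{u_\ast\}$, or all of $\Phi(u)$, and each directly yields nestedness (either $\bv$ must be isotopic to $\bu$ inside the small side $L_u$ by Lemma \ref{allone}, or $L_u$ itself lies in an associated region of $\bv$ via $u_\ast \in \Phi(v)$).

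The main step, and the \emph{main obstacle}, is the propagation claim in the strong $2$-sided case: once $V_1$ contains a single vertex $X \in V_L$, it must contain all of $V_L$. I plan an inductive argument along chains in the ``intersection graph'' on $V_L$. Given any target $Y \in V_L$, I will construct a sequence of non-peripheral regions in $L_u$ starting at $X$ and ending at $Y$ with consecutive regions essentially intersecting. The join condition $V_1 * V_2 = \Phi(u)$ then propagates membership in $V_1$ along the chain: any region in the chain lying in $V_2$ would have to be disjoint from its immediate predecessor in $V_1$, contradicting the construction of the chain. The delicate point will be producing such a chain cleanly when $X$ and $Y$ are small subregions far apart in $L_u$; this will be handled by passing through larger regions that cover filling pairs of simple closed curves, and it is made possible by the absence of annular regions in $A$, which precludes the extraneous singular join components that could otherwise block the propagation step.
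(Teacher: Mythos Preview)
The paper does not prove this lemma; it is attributed to Brendle--Margalit \cite[Section 5]{BM17} and only the statement is recorded. So there is no in-paper proof to compare against, and your attempt is filling in an argument the author deliberately outsourced.

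Your forward direction is correct and is the natural construction. For the backward direction your overall plan is right, but two points deserve comment. First, your restriction to the case without annular dividing sets is not justified: Lemma~\ref{partial} sits in Section~\ref{theproof}, which does \emph{not} carry the standing assumption of Section~\ref{section_DS}, and indeed Lemma~\ref{weak} is set up precisely to handle weak $2$-sided vertices when non-separating annuli are represented in $A$. Your claim that ``the absence of annular regions in $A$ \ldots\ precludes the extraneous singular join components'' is therefore unwarranted in general; you must also treat the filling-join case and allow singular components $V_M$ in the strong $2$-sided case.

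Second, your propagation argument is doing more work than necessary. The content of ``once $V_1$ meets $V_L$ it contains all of $V_L$'' is exactly that $V_L$ admits no nontrivial join decomposition, i.e.\ that $V_L$ is a non-singular join component. But this is already established in the proofs of Lemmas~\ref{strong} and~\ref{allone}. Using that directly, any splitting $\Phi(u)=V_1 * V_2$ forces each of $V_L$, $V_R$ to lie entirely in one factor, so without loss $V_L\subseteq V_1\subseteq\Phi(v)$; since the regions indexing $V_L$ fill $L_u$, every curve of $\bv$ meeting $L_u$ non-peripherally would be crossed by some such region, contradicting $V_L\subseteq\Phi(v)$. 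This bypasses the intersection-chain construction entirely. The $1$-sided and singular-component cases then fall out as you indicate, and the weak $2$-sided case is handled analogously using the explicit description in Lemma~\ref{weak}.
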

In other words, vertices $u$ and $v$ correspond to nested dividing sets if and only if their images in $\Phi$ are compatible.

\begin{lem}[Brendle-Margalit]\label{BigDaddyHelp}
Let $\vC_A(\Sigma)$ be a connected complex of regions with no holes and no corks.
\begin{enumerate}
\item Let $R$ be represented in $A$; then there is a simplex in $\vC_{\partial A}(\Sigma)$ that corresponds to $\partial R$.
\item The complex $\vC_{\partial A}(\Sigma)$ is connected.
\end{enumerate}
\end{lem}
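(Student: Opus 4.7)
The plan is to prove (1) directly from the structure of complements, and then derive (2) from (1) together with the connectivity of $\vC_A(\Sigma)$.

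For (1), given $R$ represented in $A$, I would decompose its complement into connected components $\Sigma \sm R = L_1 \sqcup \cdots \sqcup L_m$, and for each $i$ consider the multicurve $\partial L_i \subseteq \partial R$. The complement $\Sigma \sm \partial L_i$ has exactly two components, namely $L_i$ and $R \cup \bigcup_{j\ne i} L_j$, the second being connected because $R$ is connected and each $L_j$ abuts $R$ along some component of $\partial R$. So each $\partial L_i$ is a dividing set with associated regions $L_i$ and $\Sigma \sm \overline{L_i}$. The region $\Sigma \sm \overline{L_i}$ contains $R$ and so meets $A$; the ``no holes'' hypothesis applied to the vertex corresponding to $R$ then ensures that $L_i$ also contains a subsurface represented in $A$. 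Thus $\partial L_i \in \partial A$. For $j\ne i$, the multicurve $\partial L_j$ lies in $\overline{L_j} \subseteq \Sigma \sm \overline{L_i}$, so $\partial L_i$ and $\partial L_j$ are nested. The vertices corresponding to $\partial L_1,\ldots,\partial L_m$ therefore span a simplex of $\vC_{\partial A}(\Sigma)$ whose underlying multicurve is $\partial R$.

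For (2), given vertices $u,v$ of $\vC_{\partial A}(\Sigma)$ corresponding to $\bu, \bv \in \partial A$, I use the definition of $\partial A$ to pick regions $R_u, R_v$ represented in $A$ and disjoint from $\bu, \bv$ respectively. Connectivity of $\vC_A(\Sigma)$ yields an edge-path $R_u = R_0, R_1, \ldots, R_k = R_v$ of pairwise disjoint consecutive regions, and part (1) attaches to each $R_i$ a simplex $\sigma_i$ of $\vC_{\partial A}(\Sigma)$. I connect $u$ to a vertex of $\sigma_0$ by letting $L$ be the component of $\Sigma \sm R_0$ containing $\bu$: since $\partial L \subseteq \partial R_0$ lies in the same associated region of $\bu$ as $R_0$, the dividing sets $\bu$ and $\partial L \in \sigma_0$ are nested. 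The analogous step connects $\sigma_k$ to $v$. To bridge $\sigma_i$ and $\sigma_{i+1}$, let $L$ be the component of $\Sigma \sm R_i$ containing $R_{i+1}$ and $L'$ the component of $\Sigma \sm R_{i+1}$ containing $R_i$; then $\partial L' \subseteq \overline{R_{i+1}} \subseteq L$, so $\partial L \in \sigma_i$ and $\partial L' \in \sigma_{i+1}$ are nested. Concatenating these edges, interspersed with edges internal to each simplex $\sigma_i$, yields a path from $u$ to $v$.

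The main obstacle I anticipate is handling the boundary cases of the dividing-set construction, especially when some $L_i$ is an annulus: ``no holes'' then forces the subsurface of $L_i$ represented in $A$ to be $L_i$ itself up to isotopy, so $\partial L_i$ becomes an annular dividing set and one must confirm it is a legitimate vertex of $\vC_{\partial A}(\Sigma)$ satisfying all nesting claims. I expect the ``no corks'' hypothesis to enter precisely to rule out analogous degeneracies when a region $R_i$ along the path is a cork-type annulus, so that the associated simplex $\sigma_i$ is genuinely available for the bridging step rather than collapsing into $\bu$ or $\bv$; careful case-checking on these annular regions is the main bookkeeping burden.
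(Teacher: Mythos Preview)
The paper does not supply its own proof of this lemma; it is quoted verbatim as a result of Brendle--Margalit and used as a black box. So there is no in-paper argument to compare against.

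Your argument is correct and is essentially the natural one. For (1), decomposing $\Sigma\setminus R$ into components $L_1,\dots,L_m$ and observing that each $\partial L_i$ is a dividing set whose two sides are $L_i$ and a region containing $R$ is exactly the right move; the ``no holes'' hypothesis is precisely what guarantees each $L_i$ contains something represented in $A$, so $\partial L_i\in\partial A$. Pairwise nestedness is immediate since $\partial L_j\subset\overline{L_j}$ sits in one associated region of $\partial L_i$. For (2), transporting a path in $\vC_A(\Sigma)$ to a path in $\vC_{\partial A}(\Sigma)$ via the simplices from (1), and bridging consecutive simplices by the component-of-the-complement trick, is the standard and correct reduction; your nestedness checks (for $u$ to $\sigma_0$, for $\sigma_i$ to $\sigma_{i+1}$, and symmetrically at the end) all go through, including in the boundary cases where a curve of $\bu$ is isotopic to a boundary curve of $R_0$, since associated regions are closed.

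One comment: you anticipate that ``no corks'' will be needed to handle annular degeneracies along the path, but in fact your argument as written never invokes it. The annular cases you worry about are already handled by ``no holes'' (which forces the annulus itself to be represented in $A$, hence annular dividing sets to be available). The ``no corks'' hypothesis appears in the lemma's statement because it is the standing assumption of the section, not because the connectivity argument requires it.
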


Before completing the proof of the main theorem of this chapter we note that there is a partial order on vertices of $\vC_A(\Sigma)$. We say that $u \preceq v$ if the link of $v$ is contained in the link of $u$. A vertex is \emph{link-minimal} when it is minimal with respect to this ordering.  If a vertex $v$ is a singular join component of a perfect join in $\vC_A(\Sigma)$ then we say that it is a \emph{$1$-sided vertex} of the complex.
Finally, if a vertex $v$ corresponds to an annulus then we call $v$ an \emph{annular vertex}.

\begin{proof}[Proof of Theorem \ref{BigDaddy}]
Let $\vC_A(\Sigma)$ be a connected complex of regions such that all minimal vertices are small.  Suppose that $\vC_A(\Sigma)$ contains no holes and no corks. We would like to show that
\[
\eta_A : \Mode(\Sigma) \rightarrow \Aut \vC_A(\Sigma)
\]
 is an isomorphism. It follows from Lemma \ref{Injectivity} that $\eta_A$ is injective. It remains to show that it is surjective. Combining all the results of Section \ref{theproof} we have that there is a well-defined homomorphism
\[
\partial :\Aut \vC_A(\Sigma) \rightarrow \Aut \vC_{\partial A}(\Sigma)
\]
where for any $v \in \vC_{\partial A}(\Sigma)$ we define $\partial (\phi) (v)$ by $\Phi^{-1} \circ \phi \circ \Phi (v)$. We will show that the homomorphism $\partial$ is injective.

Suppose $\partial(\phi)$ is the identity for some $\phi \in \Aut \vC_A(\Sigma)$. Let $v$ be a $1$-sided, annular vertex of $\vC_A(\Sigma)$ corresponding to an annulus with boundary components isotopic to the curve $\bv$.  We would like to show that $\phi(v)=v$. Let the regions $R$ and $Q$ be the associated regions of $\bv$. We want to find a vertex of $\vC_{\partial A}(\Sigma)$ corresponding to a curve which is not isotopic to the curve $\bv$. Since $\vC_A(\Sigma)$ is connected then, up to renaming regions, it contains a vertex $w$ corresponding to a subsurface of $Q$ that is not homotopic to $R$. If $w$ corresponds to an annulus then the desired vertex of $\vC_{\partial A}(\Sigma)$ corresponds to the isotopy class of the boundary components of the annulus. If $w$ does not correspond to an annulus then from Lemma \ref{BigDaddyHelp} we can find the desired vertex. We do not consider the case where $w$ corresponds to the region $Q$ itself as $\vC_A(\Sigma)$ does not contain corks.

Having found vertex in $\vC_{\partial A}(\Sigma)$ that corresponds to a non-peripheral curve in $Q$ we deduce that there exist vertices of $\vC_{\partial A}(\Sigma)$ corresponding to curves that fill $Q$. Each of these vertices is fixed by $\partial (\phi)$ by assumption and so it follows that $\phi(v)$ corresponds to a region disjoint from $Q$. Since $v$ is a $1$-sided vertex and $\vC_A(\Sigma)$ has no holes we have that $\phi(v)=v$. It can be shown using a similar argument that if $v$ is a $1$-sided, non-annular, link-minimal vertex of $\vC_A(\Sigma)$ then we can deduce that $\phi(v)=v$.

Now assume that $v$ is any other vertex of $\vC_A(\Sigma)$. Let $Q$ be a complementary region of a region $R_v$, such that $v$ corresponds to $R_v$. Since $v$ is not a $1$-sided, annular vertex and $\vC_A(\Sigma)$ does not contain holes, we have that there exist vertices that span edges with $v$ and that correspond to regions contained in $Q$. We will label the set of all such vertices $\vQ$.

Suppose vertices $u,w \in \vQ$ correspond to regions $R_u , R_w \subset Q$. Define $\wt R_u$ to be a non-separating subsurface of $Q$ containing $R_u$ such that for any other subsurface $R$ with $R_u \subset R \subset \wt R_u$ we have that $R$ and $\wt R_u$ are homotopic. Define $\wt R_w$ analogously.  Writing $u \le w$ if $\wt R_u \subseteq \wt R_w$ up to homotopy, we see $\le$ is a partial order on the vertices of $\vQ$. We claim that a $\le$-minimal vertex of $\vQ$ is either a $1$-sided, annular vertex or it is a $1$-sided, non-annular, link-minimal vertex.

We prove the claim in three steps. First we assume that $u \in \vQ$ is a $2$-sided, non-annular vertex. Let $P$ be a complementary region of $R_u$ that does not contain the boundary of $Q$. There are no holes in $\vC_A(\Sigma)$, so there must exist a vertex $w$ of $\vC_A(\Sigma)$ corresponding to a subsurface of $P$. This implies that $w < u$.

In the second case we assume that $u$ is a $1$-sided, non-annular vertex that is not link-minimal. Since $u$ is non-annular, and there are no holes in $\vC_A(\Sigma)$, the region $R_u$ must be non-separating. However $u$ is not link-minimal, so there must be a vertex $w$ such that $R_w \subset R_u$, hence $w < u$.

Finally we suppose that $u$ is a $2$-sided, annular vertex. Denote by $P$ the complementary region of $R_u$ that does not contain the boundary of $Q$. Since $\vC_A(\Sigma)$ has no corks, there must be a vertex $w$ of $\vC_A(\Sigma)$ represented by a proper subsurface of $P$. Once again, since $\vC_A(\Sigma)$ has neither holes nor corks, it must be that there exists a vertex $w < u$.

We have therefore characterised all $\le$-minimal vertices. We now have that one of the following conditions hold;
\begin{enumerate}
\item there exist $1$-sided, annular vertices and $1$-sided, non-annular, link-minimal vertices in $\vQ$ corresponding to regions that fill the region $Q$,
\item there exists a $1$-sided annular vertex of $\vC_A(\Sigma)$ corresponding to the boundary of $Q$ and no vertices of $\vC_A(\Sigma)$ correspond to non-peripheral subsurfaces of $Q$, or
\item the region $Q$ is homeomorphic to $\Sigma_0^2$, $\Sigma_{0,1}^2$, or $\Sigma_0^3$ and $\vQ$ contains $1$-sided, annular vertices corresponding to non-separating annuli.
\end{enumerate}

Indeed, if there exists a $\le$-minimal vertex of $\vQ$ corresponding to a non-peripheral region in $Q$ then by the above claim we must be in the first case. If however, all $\le$-minimal vertices of $\vQ$ are peripheral then the boundary of $Q$ may be connected or it may not. If it is connected then since there are no corks, the region $Q$ is not represented in $A$ and we are in the second case. If the boundary of $Q$ is not connected then each of its boundary components must be nonseparating curves and hence nonseparating annuli are represented in $A$. If $Q$ is homeomorphic to anything other than an annulus, a punctured annulus, or a pair of paints then we can find nonseparating annuli in that fill $Q$. This contradicts our assumption that all $\le$-minimal vertices are peripheral, so we must be in the third case.

Given a vertex $v$, let $\vV$ be the set of all the $1$-sided, annular vertices and $1$-sided, non-annular, link-minimal vertices in the link of $v$. We may now conclude that $v$ is the unique vertex of $\vC_A(\Sigma)$ such that $\Lk(v)$ contains $\vV$. Since we have shown that such vertices are fixed by $\phi$ it follows that $\phi(v)=v$.  We have succeeded in proving that $\partial$ is injective.

If every minimal vertex of $\vC_A(\Sigma)$ is small then it follows that every minimal vertex of $\vC_{\partial A}(\Sigma)$ is small.  From Lemma \ref{BigDaddyHelp} the complex $\vC_{\partial A}(\Sigma)$ is connected and by Theorem \ref{etad} the natural homomorphism $\eta_{\partial A}$ is an isomorphism. The diagram
\[
\begin{tikzcd}
\Mode(\Sigma) \arrow[hookrightarrow]{rr}{\eta_A} \arrow{dr}{\cong}[swap]{\eta_{\partial A}} & & \Aut \vC_A(\Sigma) \arrow[hookrightarrow]{dl}{\partial} \\ 
 & \Aut \vC_{\partial A}(\Sigma) & 
\end{tikzcd}
\]
is commutative from the definition of $\partial$. Finally we may conclude that both $\eta_A$ and $\partial$ are isomorphisms, completing the proof.
\end{proof}

\section{Geometric normal subgroups}\label{section_normal}
In this section we will prove Theorem \ref{AutComm} which states that many normal subgroups of $\Mode(\Sigma)$ are geometric.  We do this by defining a complex of regions related to a normal subgroup $N$ and an application of Theorem \ref{BigDaddy}.  We begin with some definitions regarding commutativity of certain mapping classes.  Let $R$ be region of $\Sigma$. Recall that a \emph{partial pseudo-Anosov} element of $\Mode(\Sigma)$ is the image of a pseudo-Anosov element of $\Mode(R)$ under the map $\Mode(R) \rightarrow \Mode(\Sigma)$ induced by the inclusion of $R$ in the surface $\Sigma$.

\subsection*{Pure mapping classes}
Using the terminology of Ivanov \cite{TeichModGroups} we call a mapping class $f$ \emph{pure} if it can be written as a product $f_1 \dots f_k$ where;
\begin{enumerate}
\item each $f_i$ is a partial pseudo-Anosov element or a power of a Dehn twist; and
\item if $i \ne j$ then the supports of $f_i$ and $f_j$ have disjoint non-homotopic representatives.
\end{enumerate}
The $f_i$ are called the \emph{components} of $f$. We note that the pure elements of $\Mode(\Sigma)$ are also elements of $\Mod(\Sigma)$.

We call a subgroup of $\Mode(\Sigma)$ \emph{pure} if each of its elements is pure. The support of a pure subgroup is well-defined and is invariant under passing to finite index subgroups.

\subsection*{Basic subgroups}
Let $N$ be a pure normal subgroup of $\Mode(\Sigma)$ and $G$ a finite index subgroup of $N$.  There exists a strict partial order on subgroups of $G$ as follows:
\[
H \prec H' \quad \mbox{if} \quad C_G(H') \subsetneq C_G(H).
\]
This means that `$\prec$' is a transitive binary relation, but no subgroup is related to itself. We say a subgroup of $G$ is a \emph{basic subgroup} if among all non-abelian subgroups of $G$ it is minimal with respect to the strict partial order.
Crucially, we will make use of the fact that if $B$ is a basic subgroup with support $R$, then the centraliser of $B$ is supported in the complement of $R$ \cite[Section 6.2]{BM17}.

Recall from Section \ref{introduction_normal} that for any $f \in \Mode(\Sigma)$ we write $R_f$ for a single-boundary subsurface such that $f$ is supported in $R_f$ and $f$ is not supported in any single-boundary proper subsurface of $R_f$.  Furthermore, $f \in N$ is of minimal support if for all elements $h \in N$ such that $R_h \subset R_f$ we have that $R_h$ and $R_f$ are homeomorphic.  We state the following result of Brendle-Margalit \cite[Lemma 6.4]{BM17}.  While stated in their paper only for closed surfaces, the proof translates to punctured surfaces.

\begin{lem}[Brendle-Margalit]\label{Action}
Let $N$ be a pure normal subgroup of $\Mode(\Sigma)$ that contains an element $f$ of minimal support and let $G$ be a finite index subgroup of $N$.
\begin{enumerate}
\item The support of a basic subgroup of $G$ is a non-annular region of $\Sigma$.
\item If $B$ is a basic subgroup of $N$ then $B \cap G$ is a basic subgroup of $G$; similarly, any basic subgroup of $G$ is a basic subgroup of $N$.
\item $N$ contains a basic subgroup whose support is a subsurface of $R_f$.
\item $\Mode(\Sigma)$ acts on the set of supports of basic subgroups of $G$.
\end{enumerate}
\end{lem}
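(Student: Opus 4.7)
The plan is to adapt the closed-surface argument of Brendle-Margalit \cite[Lemma 6.4]{BM17} essentially verbatim, verifying at each step that the presence of punctures introduces no new obstruction. The underlying principle throughout is Ivanov's classification of centralisers of pure subgroups of $\Mode(\Sigma)$: if $H \leq \Mode(\Sigma)$ is pure with support $R$, then $C_{\Mode(\Sigma)}(H)$ consists, up to Dehn twists on $\partial R$, of pure mapping classes supported in the complement of $R$. This description is insensitive to punctures, so everything reduces to combinatorial reasoning about supports of subsurfaces.

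For (1), I would observe that a basic subgroup $B$ is non-abelian by definition, so it cannot be supported on an single annulus (else it embeds in a cyclic Dehn-twist subgroup). Connectedness of the support of $B$ follows from a projection argument: if the support decomposed as $R_1 \sqcup R_2$, one passes to a finite-index subgroup of $B$ in which pure elements split as products across components; the restriction to $R_1$ (say) is non-abelian---otherwise $B$ itself would be abelian---and its centraliser in $G$ strictly enlarges $C_G(B)$ by elements supported in $R_2$, contradicting the $\prec$-minimality of $B$.

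For (2), the critical point is that passing to a finite-index subgroup of a pure subgroup preserves its support, so $B$ and $B \cap G$ have the same support whenever $[N:G] < \infty$, and consequently $C_G(B) = C_G(B \cap G)$. Since non-abelianness also descends to finite-index subgroups, the strict partial order $\prec$ corresponds on both sides, so basic subgroups of $N$ intersect to basic subgroups of $G$ and conversely. For (3), I would take $R_f$ non-annular, which is the only non-vacuous case given (1). For any $\phi \in \Mode(\Sigma)$ supported in $R_f$, normality of $N$ places $\phi f \phi^{-1}$ in $N$ with support $\phi(R_f) \subseteq R_f$; choosing $\phi$ so that $\phi f \phi^{-1}$ and $f$ do not commute yields a non-abelian subgroup of $N$ supported in $R_f$. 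Extracting a $\prec$-minimal such subgroup produces a basic subgroup of $N$ with support contained in $R_f$.

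For (4), I would use normality: given $\phi \in \Mode(\Sigma)$ and a basic subgroup $B$ of $N$, the conjugate $\phi B \phi^{-1}$ lies in $N$, is non-abelian, and has centraliser $\phi C_G(B) \phi^{-1}$ up to intersection with $G$, so it is basic in $N$ with support $\phi(R_B)$. Combining with (2) translates this into a well-defined action on supports of basic subgroups of $G$. The main obstacle I anticipate is the connectedness clause in (1): extracting a non-abelian restriction to one component of the support requires first replacing $B$ by a finite-index subgroup in which the pure decomposition splits as a direct product across components, and then invoking (2) to see that this restricted subgroup still corresponds to a basic subgroup. Threading this finite-index passage through the definition of basic without circularity is the delicate technical step in the translation, and it is also the place where the boundary-and-puncture bookkeeping must be tracked most carefully.
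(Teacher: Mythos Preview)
Your proposal is correct and matches the paper's treatment: the paper does not give its own proof of this lemma but simply cites Brendle--Margalit \cite[Lemma 6.4]{BM17} and remarks that ``while stated in their paper only for closed surfaces, the proof translates to punctured surfaces.'' Your sketch of how that translation goes is more detailed than anything the paper provides, but the underlying approach---Ivanov's description of centralisers of pure subgroups, support-preservation under finite-index passage, and conjugation for the $\Mode(\Sigma)$-action---is exactly the content of the cited argument.
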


\subsection{A complex of regions}
It follows from Lemma \ref{Action} that we may define a complex of regions $\vC^\sharp_N(\Sigma)$ whose vertices correspond to the supports of basic subgroups of $N$.  In particular, from Lemma \ref{Action}(1) we have that $\vC^\sharp_N(\Sigma)$ has no corks.  If we assume that every element of $N$ with minimal support is also of small support, then by Lemma \ref{Action}(3) the complex $\vC^\sharp_N(\Sigma)$ contains minimal vertices that are small.  Indeed, every minimal vertex of $\vC_N(\Sigma)$ must be small, as otherwise there exists an element of $N$ with minimal support that is not of small support.  This complex may be disconnected and it may contain holes.

Suppose $v \in \vC^\sharp_N(\Sigma)$ is a hole.  If $v$ corresponds to $R$ then there are components $Q_1, \dots, Q_k$ of $\Sigma \sm R$ such that no vertices of $\vC_N^\sharp(\Sigma)$ correspond to regions in any $Q_i$.  Define the \emph{filling} of $v$ to be the region $R \cup Q_1 \cup \dots \cup Q_k$. We now define $\vC_N^\flat(\Sigma)$ to be the complex of regions obtained by replacing the holes in $\vC_N^\sharp(\Sigma)$ with vertices corresponding to their fillings.

Brendle-Margalit show that the complex $\vC^\flat_N(\Sigma)$ has no holes, no corks, and that every minimal vertex is small \cite[Lemma 2.4]{BM17}. Furthermore, they show that the small vertices of $\vC^\flat_N(\Sigma)$ lie in the same connected component of the complex \cite[Lemma 6.5]{BM17}. We define this connected component to be the complex of regions $\vC_N(\Sigma)$.  It is easy to check that since $\vC^\flat_N(\Sigma)$ has no holes and no corks the complex $\vC_N(\Sigma)$ has no holes and no corks. We have therefore proven the following result.

\begin{prop}\label{PRP1}
Let $N$ be a pure normal subgroup of $\Mode(\Sigma)$ such that every element of minimal support is also of small support.  Then the natural homomorphism
\[
\Mode(\Sigma) \rightarrow \Aut \vC_N(\Sigma)
\]
is an isomorphism.
\end{prop}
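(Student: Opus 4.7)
The plan is to invoke Theorem \ref{BigDaddy} applied directly to $\vC_N(\Sigma)$. Since $\vC_N(\Sigma)$ was defined as a connected component of $\vC_N^\flat(\Sigma)$, it is itself a complex of regions $\vC_A(\Sigma)$, where $A \subset \vR(\Sigma)$ is the subset of $\Mode(\Sigma)$-orbits that meet this component. Three hypotheses of Theorem \ref{BigDaddy} must be checked: connectedness (built into the definition of $\vC_N(\Sigma)$), the absence of holes and corks (recorded in the paragraph preceding the statement, via the Brendle-Margalit lemmas for $\vC_N^\flat(\Sigma)$ together with the observation that passing to a connected component cannot create holes or corks here), and the smallness of every minimal vertex.

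The only point requiring genuine argument is the last. First I would reduce to $\vC_N^\flat(\Sigma)$: if $v$ is minimal in $\vC_N(\Sigma)$, I claim it is already minimal in $\vC_N^\flat(\Sigma)$. Any vertex $u$ of $\vC_N^\flat(\Sigma)$ with $\wh u \subset \wh v$ corresponds to the support (or filling) of a basic subgroup lying in $\wh v$; by Lemma \ref{Action}(3) applied inside $\wh v$, such a $u$ can be found in the same connected component as $v$, hence lies in $\vC_N(\Sigma)$. Minimality of $v$ in $\vC_N(\Sigma)$ then forces $\wh u \cong \wh v$, so $v$ is minimal in $\vC_N^\flat(\Sigma)$ as well. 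By the Brendle-Margalit result cited in the preceding paragraph (\cite[Lemma 2.4]{BM17}) every minimal vertex of $\vC_N^\flat(\Sigma)$ is small; the two witnessing small vertices $u_1, u_2$ lie in the same connected component as $v$ (again by \cite[Lemma 6.5]{BM17}), hence in $\vC_N(\Sigma)$, so $v$ is small in $\vC_N(\Sigma)$ in the sense of Section \ref{introduction_metaconjecture}.

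With the three hypotheses verified, Theorem \ref{BigDaddy} yields that the natural homomorphism $\eta_A : \Mode(\Sigma) \to \Aut \vC_N(\Sigma)$ is an isomorphism, which is precisely the statement of the proposition.

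I expect the main obstacle to be exactly the minimality-transfer argument above: one must be careful that restricting to a connected component does not introduce spurious minimal vertices that are not small, and one must use the hypothesis that every element of minimal support in $N$ is of small support to guarantee that the minimal vertices we actually see in $\vC_N(\Sigma)$ are covered by the small vertices used in \cite[Lemma 6.5]{BM17}. Everything else is bookkeeping, routing through the machinery of complexes of regions and basic subgroups already established by Brendle-Margalit.
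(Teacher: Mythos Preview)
Your proposal is correct and follows essentially the same route as the paper: verify that $\vC_N(\Sigma)$ is a connected complex of regions with no holes, no corks, and every minimal vertex small, then apply Theorem~\ref{BigDaddy}. The only minor difference is that for the smallness of minimal vertices the paper argues directly---a minimal vertex of $\vC_N(\Sigma)$ that fails to be small would witness an element of $N$ of minimal support but not of small support, contrary to hypothesis---whereas you route through a minimality-transfer to $\vC_N^\flat(\Sigma)$ and invoke \cite[Lemma~2.4]{BM17}; both arguments are valid and equivalent in spirit.
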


\subsection{Completing the proof}
We can now move on to proving Theorem \ref{AutComm}.  The next proposition is the key step and mirrors the argument of Ivanov discussed Section \ref{introduction_complexes}

\begin{prop}[Brendle-Margalit]\label{phi_injective}
Let $N$ be a pure normal subgroup of $\Mode(\Sigma)$ such that every element of minimal support is also of small support.  There exists a natural injective homomorphism
\[ 
\Phi : \Comm N \to \Aut \vC_N(\Sigma).
\]
\end{prop}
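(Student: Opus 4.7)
The plan is to mimic Ivanov's construction of an injective homomorphism from $\Comm \Mod(\Sigma)$ into $\Aut \vC(\Sigma)$, replacing the role of powers of Dehn twists about curves with basic subgroups of $N$ supported in regions. Given $\alpha \in \Comm N$, I would represent it by an isomorphism $\phi \colon H_1 \to H_2$ between finite index subgroups of $N$. Since the definition of basic subgroup and of the partial order $\prec$ refers only to group-theoretic data (non-abelianness and minimality with respect to the inclusion of centralizers), $\phi$ sends basic subgroups of $H_1$ bijectively to basic subgroups of $H_2$. By Lemma \ref{Action}(2), this descends to a bijection on the set of basic subgroups of $N$.

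Next I would extract an automorphism of $\vC_N(\Sigma)$ from this bijection. Vertices of $\vC_N(\Sigma)$ are indexed, via Lemma \ref{Action}(4), by homotopy classes of supports of basic subgroups of $N$, so I need the induced assignment on supports to be well-defined modulo homotopy. This relies on a purely group-theoretic characterization of ``having the same support'' in terms of centralizers and commutation, exactly as in Brendle-Margalit \cite[Section 6]{BM17}. For edges I would use the dichotomy that two basic subgroups commute if and only if their supports admit disjoint non-isotopic representatives: the forward direction is the standard fact about commuting partial pseudo-Anosovs and powers of Dehn twists, and the reverse uses the fact cited in the excerpt that the centralizer of a basic subgroup with support $R$ is supported in the complement of $R$. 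Since $\vC_N(\Sigma)$ is a flag complex, the induced vertex map extends to an element $\Phi(\alpha) \in \Aut \vC_N(\Sigma)$. Welldefinedness of $\Phi$ on commensuration classes and the homomorphism property follow routinely by restricting to a common finite index subgroup of $H_1 \cap H_2$ and using that basic subgroups of this restriction are basic in $N$.

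The main obstacle is injectivity. Suppose $\Phi(\alpha) = \id$; then by Proposition \ref{PRP1} the corresponding element of $\Mode(\Sigma)$ under the isomorphism $\Mode(\Sigma) \cong \Aut \vC_N(\Sigma)$ is trivial. Following Brendle-Margalit \cite[Section 6]{BM17}, I would pass to a finite index subgroup $K \le H_1 \cap H_2$ generated by a sufficiently rich collection of partial pseudo-Anosov elements and powers of Dehn twists whose conjugation actions on basic subgroups are controlled by $\Phi(\alpha)$; the triviality of $\Phi(\alpha)$ then forces $\phi|_K$ to agree with the identity, so $\alpha$ is trivial in $\Comm N$. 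The technical core of this step is the rigidity statement that an isomorphism between finite index subgroups of $N$ which fixes the support of every basic subgroup must be inner on a further finite index subgroup. This is the analogue in our setting of Ivanov's identity $T_\bc = T_{f(\bc)} = f T_\bc f^{-1}$, and adapting it from individual Dehn twists on curves to whole basic subgroups supported in regions is the delicate part, to be imported from the closed-case machinery of \cite[Section 6]{BM17}.
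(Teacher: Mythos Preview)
Your construction of $\Phi$ is essentially the paper's: represent a commensuration by an isomorphism $\phi\colon H_1\to H_2$, push basic subgroups through $\phi$ using Lemma~\ref{Action}(2), check well-definedness on supports via centralizer data, and check edges via commutation. That part is fine and matches the source.

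The injectivity argument, however, is both more complicated than necessary and not actually completed. You propose to pass to a special finite-index subgroup $K$ generated by carefully chosen partial pseudo-Anosovs and twist powers, and then invoke an unspecified ``rigidity statement'' to the effect that an isomorphism fixing all supports of basic subgroups is inner on a further finite-index subgroup. Neither the construction of $K$ nor the rigidity statement is carried out, and ``inner'' is not quite the target anyway: you need $\phi$ to agree with the \emph{identity} on a finite-index subgroup.

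The paper's argument avoids all of this. The point you are missing is the conjugation formula $f_\star(v_B)=v_{fBf^{-1}}$ for $f\in G_1$, together with the fact that $\eta_A\colon\Mode(\Sigma)\to\Aut\vC_N(\Sigma)$ is already known to be an isomorphism (Proposition~\ref{PRP1}). If $\alpha_\star=\id$, then for any $f\in G_1$ one computes directly
\[
f_\star(v_B)=v_{fBf^{-1}}=\alpha_\star(v_{fBf^{-1}})=v_{\alpha(fBf^{-1})}=v_{\alpha(f)\alpha(B)\alpha(f)^{-1}}=\alpha(f)_\star(v_{\alpha(B)})=\alpha(f)_\star\alpha_\star(v_B)=\alpha(f)_\star(v_B),
\]
so $f_\star=\alpha(f)_\star$ in $\Aut\vC_N(\Sigma)$, and injectivity of $\eta_A$ gives $\alpha(f)=f$. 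No auxiliary subgroup $K$ and no separate rigidity input are needed; the rigidity is already encoded in Theorem~\ref{BigDaddy}.
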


\begin{proof}
We first define the homomorphism.  For any basic subgroup $B$ of $N$ we write $v_B$ for the vertex of $\vC_N(\Sigma)$ corresponding to the support of $B$.  Suppose $\alpha : G_1 \to G_2$ is an isomorphism between finite index subgroups of $N$.  We define $\alpha_\star := \Phi([\alpha])$ by
\[
\alpha_\star (v_B) = v_{\alpha(B \cap G_1)}.
\]
We must first show that there is a vertex of $\vC_N(\Sigma)$ that can be expressed as $v_{\alpha(B \cap G_1)}$, that is, we must show that $\alpha(B \cap G_1)$ is a basic subgroup of $N$.  This is indeed the case, as by Lemma \ref{Action}(2) the group $B \cap G_1$ is a basic subgroup of $G_1$ and since $\alpha$ is an isomorphism we have that $\alpha(B \cap G_1)$ is a basic subgroup of $G_2$.  Once again, Lemma \ref{Action}(2) tells us that $\alpha(B \cap G_1)$ is a basic subgroup of $N$, so $v_{\alpha(B \cap G_1)}$ is a vertex of $\vC_N(\Sigma)$.

We will now show that $\alpha_\star$ is a well defined automorphism of the vertices of $\vC_N(\Sigma)$.  In particular we must show that $\alpha_\star = \beta_\star$ whenever $[\alpha]=[\beta]$.  Suppose then that $H_1$ and $H_2$ are finite index subgroups of $N$ and $\beta : H_1 \to H_2$ is an isomorphism such that $[\alpha]=[\beta]$ as above.  We would like to show that
\[
v_{\alpha(B \cap G_1)} = v_{\beta(B \cap H_1)}.
\]
The isomorphisms $\alpha$ and $\beta$ agree on some finite index subgroup of $N$.  We may therefore assume that $H_1$ is a finite index subgroup of $G_1$ such that $\alpha(B \cap H_1) = \beta(B \cap H_1)$.  Since $B \cap H_1$ has finite index in $B \cap G_1$ it must be that $\beta(B \cap H_1) = \alpha(B \cap H_1)$ has finite index in $\alpha(B \cap G_1)$.  It follows that the supports of $\alpha(B \cap G_1)$ and $\beta(B \cap H_1)$ are equal, and so the vertices $v_{\alpha(B \cap G_1)}$ and $v_{\beta(B \cap H_1)}$ are equal.

It is possible for one vertex of $\vC_N(\Sigma)$ to correspond to the support of two basic subgroups.  We must also deal with this issue.  To that end, assume that $B'$ is a basic subgroup of $N$ such that $v_B = v_{B'}$.  We would now like to show that
\[
v_{\alpha(B \cap G_1)} = v_{\alpha(B' \cap G_1)}.
\]
As mentioned above, the centraliser of a basic subgroup with support $R$ is supported in the complement of $R$.  As the support of a basic subgroup is invariant under passing to finite index subgroups, we have that the centralisers of $B \cap G_1$ and $B' \cap G_1$ are equal.  It follows that the centralisers of $\alpha( B \cap G_1 )$ and $\alpha ( B' \cap G_1 )$ are also equal.  This is enough to show that $v_{\alpha(B \cap G_1)} = v_{\alpha(B' \cap G_1)}$, see \cite[Lemma 6.2]{BM17}.

In order to prove that $\Phi$ is a homomorphism it remains only to show that $\alpha_\star$ preserves the edges of $\vC_N(\Sigma)$.  We claim that two vertices $v_B$ and $v_B'$ span an edge if and only if $B$ and $B'$ commute.  Now, the subgroups $B$ and $B'$ commute if and only if $B \cap G_1$ and $B' \cap G_1$ commute.  This occurs precisely when the subgroups $\alpha ( B \cap G_1 )$ and $\alpha ( B' \cap G_1 )$ commute.  This implies that $\alpha ( B' \cap G_1)$ is a subset of the centraliser of $\alpha ( B \cap G_1)$.  This is enough to show that the vertices $v_{\alpha(B \cap G_1)}$ and $v_{\alpha(B' \cap G_1)}$ spans an edge.  This completes the proof that $\alpha_\star$ is a well defined automorphism of $\vC_N(\Sigma)$, hence $\Phi$ is a well defined homomorphism.

Finally we wish to show that $\Phi$ is injective.  Assume then that $\alpha_\star$ is the identity.  We will show that $[\alpha]$ is the identity and in particular that $\alpha$ is the identity.  For any $f \in G_1$ we would like to show that $\alpha(f) = f$.  We will write $f_\star := \eta_A(f)$ and $\alpha(f)_\star := \eta_A(\alpha(f))$, where $\eta_A$ is the isomorphism from Theorem \ref{BigDaddy}.  It is enough therefore to show that $f_\star = \alpha(f)_\star$.  As above we may assume that $B$ is contained in $G_1$.  Furthermore, it is true that $f_\star(v_B) = v_{fBf^{-1}}$, see \cite[Lemma 6.7(4)]{BM17}. We have
\[
f_\star(v_B) = v_{fBf^{-1}} = \alpha_\star (v_{f B f^{-1}}) = v_{\alpha(f B f^{-1})} = v_{\alpha(f) \alpha(B) \alpha(f)^{-1}}
\]
\[
= \alpha(f)_\star(v_{\alpha(B)}) = \alpha(f)_\star \alpha_\star (v_B) = \alpha(f)_\star (v_B).
\]
We have therefore shown that $\alpha(f)_\star = f_\star$ and so $\alpha(f) = f$.  It follows that the homomorphsim $\Phi$ is injective.
\end{proof}

We can now complete the proof of the main theorem of the paper.  Before giving the proof we note that if $G$ is a group and $H$ is a finite index subgroup then $\Comm G \cong \Comm G \cap H$.  Indeed, any isomorphism of finite index subgroups of $G$ restricts to an isomorphism of finite index subgroups of $G \cap H$ and any finite index subgroup of $G \cap H$ is also finite index in $G$.

\begin{proof}[Proof of Theorem \ref{AutComm}]
Let $N$ be a normal subgroup of $\Mode(\Sigma)$.  It can be shown that the natural homomorphisms
\[
\Mode(\Sigma) \to \Aut N \to \Comm N
\]
are injective, for example see Brendle-Margalit \cite[Proof of Theorem 1.1]{BM17}.  Let $P$ be a pure normal subgroup of finite index in $\Mode(\Sigma)$.  The fact that such a subgroup exists is shown by Ivanov \cite{TeichModGroups}.  Suppose every element of minimal support in $N$ is also of small support, then by Propositions \ref{PRP1} and \ref{phi_injective} we have the following commutative diagram:
\[
\begin{tikzcd}
\Mode(\Sigma) \arrow[hookrightarrow]{dd} \arrow{rr}{\cong} &  & \Aut \vC_{N \cap P}(\Sigma) \\ \\ 
\Aut N \arrow[hookrightarrow]{r} & \Comm N \arrow{r}{\cong} & \Comm N \cap P \arrow[hookrightarrow]{uu}{\Phi}
\end{tikzcd}
\]
This is enough to show that $\Phi$ is an isomorphism.  It follows that every homomorphism in the diagram is an isomorphism,  completing the proof.
\end{proof}

\subsection{Application to the Johnson filtration}\label{section_JF}

Finally we will apply Theorem \ref{AutComm} to the Johnson filtration which was introduced in Section \ref{introduction_normal}.  Given two curves $\ba, \bb$ with algebraic intersection two, there exists a product of Dehn twists $T_\ba$, $T_\bb$ that belongs to $\vJ_k(\Sigma)$ for any $k$, as argued by Farb \cite[Theorem 5.10]{FarbProblems}.  It follows that for any $k$, if $f 
\in \vJ_k(\Sigma)$ is an element of minimal support then $R_f$ is homeomorphic to $\Sigma_{2,0}^1$, $\Sigma_{1,1}^1$, or $\Sigma_{0,3}^1$.

Suppose that $\Sigma = \Sigma_{g,n}$ where $g \ge 5$ and $n > 0$ and let $f$ be an element of minimal support in $\vJ_k(\Sigma)$.  We first consider the case where $R_f \cong \Sigma_{2,0}^1$.  Now, $f$ is of small support if there exist elements $h_1,h_2 \in \vJ_k(\Sigma)$ satisfying the inequalities $(1)$ and $(2)$.  If $n \ge 3$ we may choose $h_1,h_2$ such that $R_{h_1} \cong R_{h_2} \cong \Sigma_{1,1}^1$.  Hence,
\begin{align*}
g = 5 = 2 + 1 + 1 + 1 &= g(R_f) + g(R_{h_1}) + g(R_{h_1}) + 1, \mbox{and} \\ 
n \ge 3 = 0 + 1 + 1 + 1 &= n(R_f) + n(R_{h_1}) + n(R_{h_1}) + 1.
\end{align*}
Next, we consider the case where $R_f \cong \Sigma_{1,1}^1$.  Here, if $n \ge 3$ we may choose $h_1, h_2 \in \vJ_k(\Sigma)$ such that $R_{h_1} \cong \Sigma_{2,0}^1$ and $R_{h_2} \cong \Sigma_{1,1}^1$.  It follows that
\begin{align*}
g = 5 = 1 + 2 + 1 + 1 &= g(R_f) + g(R_{h_1}) + g(R_{h_2}) + 1, \mbox{and} \\ 
n \ge 3 = 1 + 0 + 1 + 1 &= n(R_f) + n(R_{h_1}) + n(R_{h_2}) + 1.
\end{align*}
Finally, if $f$ is such that $R_f \cong \Sigma_{0,3}^1$ and $n \ge 5$ we choose $h_1,h_2$ such that $R_{h_1} \cong \Sigma_{2,0}^1$ and $R_{h_2} \cong \Sigma_{1,1}^1$ and so
\begin{align*}
g = 5 \ge 0 + 2 + 1 + 1 &= g(R_f) + g(R_{h_1}) + g(R_{h_2}) + 1, \mbox{and} \\ 
n \ge 5 = 3 + 0 + 1 + 1 &= n(R_f) + n(R_{h_1}) + n(R_{h_2}) + 1.
\end{align*}
Applying Theorem \ref{AutComm} we have that $\vJ_k(\Sigma)$ is geometric, that is, we arrive at Corollary \ref{JF}.


\bibliography{thesisbib}{}

\begin{thebibliography}{10}

\bibitem{An}
B.~H. {An}.
\newblock {Automorphisms of braid groups on orientable surfaces}.
\newblock {\em ArXiv e-prints}, July 2015.

\bibitem{BL94}
Hyman Bass and Alexander Lubotzky.
\newblock Linear-central filtrations on groups.
\newblock {\em Contemp. Math.}, 169, 1994.

\bibitem{BDR}
Juliette Bavard, Spencer Dowdall, and Kasra Rafi.
\newblock Isomorphisms between big mapping class groups.
\newblock {\em pre-print}, 2017.

\bibitem{Strongly}
Brian Bowditch.
\newblock Rigidity of the strongly separating curve graph.
\newblock {\em Michigan Mathematical Journal}, (65):813-832, 2016.

\bibitem{Sep}
Tara~E. Brendle and Dan Margalit.
\newblock Commensurations of the {Johnson} kernel.
\newblock {\em Geometry and Topology}, (8):1361-1384, 2004.

\bibitem{BM17}
Tara~E. Brendle and Dan Margalit.
\newblock Normal subgroups of mapping class groups and the metaconjecture of
  {Ivanov}.
\newblock {\em \href{https://arxiv.org/abs/1710.08929}{arxiv:1710.08929}},
  2017.

\bibitem{BMP}
Tara~E. Brendle, Dan Margalit, and Andrew Putman.
\newblock Generators for the hyperelliptic {Torelli} group and the kernel of
  the {Burau} representation at t = -1.
\newblock {\em Inventiones Mathematicae}, 200(1):263-310, 2015.

\bibitem{BPS}
Martin Bridson, Alexandra Pettet, and Juan Souto.
\newblock The abstract commensurator of the {Johnson} kernels.
\newblock {\em in preperation}.

\bibitem{CMM}
Matt Clay, Johanna Mangahas, and Dan Margalit.
\newblock Normal right-angled {Artin} subgroups of mapping class groups.
\newblock {\em In preperation}.

\bibitem{FarbProblems}
Benson Farb.
\newblock Fifteen problems about the mapping class groups.
\newblock {\em Proceedings of Symposia in Pure Mathematics}, 74 - Problems on
  Mapping Class Groups and Related Topics, 2006.

\bibitem{FI05}
Benson Farb and Nikolai~V. Ivanov.
\newblock The {Torelli} geometry and its applications. research announcement.
\newblock {\em Mathematical Research Letters}, 293-301, 2005.

\bibitem{Ham14}
Ursula Hamenst{\"a}dt.
\newblock Distance in the curve graph.
\newblock {\em Geometriae Dedicata}, 168(1):101--112, 2014.

\bibitem{IIM}
E.~{Irmak}, N.~V. {Ivanov}, and J.~D. {McCarthy}.
\newblock {Automorphisms of surface braid groups}.
\newblock {\em ArXiv Mathematics e-prints}, June 2003.

\bibitem{Nonsep}
Elmas Irmak.
\newblock Complexes of nonseparating curves and mapping class groups.
\newblock {\em Michigan Mathematical Journal}, (54):81-110, 2006.

\bibitem{Arc}
Elmas Irmak and John~D. McCarthy.
\newblock Injective simplicial maps of the arc complex.
\newblock {\em Turkish Journal of Mathematics}, (34):2055-2077, 2010.

\bibitem{IV}
Nikolai~V. Ivanov.
\newblock Automorphisms of complexes of curves and of {Teichm\"uller} spaces.
\newblock {\em International Mathematics Research Notices}, (14):651-666, 1997.

\bibitem{TeichModGroups}
Nocolai~V. Ivanov.
\newblock {\em Subgroups of {Teichm\"{u}ller} Modular Groups}.
\newblock American Mathematical Society, 1992.

\bibitem{DJ1}
Denis Johnson.
\newblock The structure of the {Torelli} group. {I}. a finite set of generators
  for $\mathcal{I}$.
\newblock {\em Annals of Mathematical Studies}, 118(3):423-442, 1983.

\bibitem{DJ2}
Denis Johnson.
\newblock The structure of the {Torelli} group. {II}. a characterization of the
  group generated by twists on bounding curves.
\newblock {\em Topology}, 24(2):113-126, 1985.

\bibitem{YK01}
Yasushi Kasahara.
\newblock An expansion of the {Jones} representation of genus 2 and the
  {Torelli} group.
\newblock {\em Algebr. Geom. Topol.}, 1:39-55, 2001.

\bibitem{Kida}
Yoshikata Kida.
\newblock Automorphisms of the {Torelli} complex and the complex of separating
  curves.
\newblock {\em Journal of the Mathematical Society of Japan}, 63, 2011.

\bibitem{KOR}
Mustafa Korkmaz.
\newblock Automorphisms of complexes of curves on punctured spheres and on
  punctured tori.
\newblock {\em Topology and its applications}, 95, 1999.

\bibitem{Arccurve}
Mustafa Korkmaz and Athanase Papadopoulos.
\newblock On the arc and curve complex.
\newblock {\em Mathematical Proceedings of the Cambridge Philosophical
  Society}, (148):473-483, 2010.

\bibitem{Triangulation}
Mustafa Korkmaz and Athanase Papadopoulos.
\newblock On the ideal triangulation graph of a punctured surface.
\newblock {\em Annalesde l'Institut Fourier}, (62):1367-1384, 2012.

\bibitem{Pants}
Dan Margalit.
\newblock Automorphisms of the pants complex.
\newblock {\em Geometry and Topology}, (8):1361-1384, 2004.

\bibitem{MMHyper}
H.~A. {Masur} and Y.~N. {Minsky}.
\newblock {Geometry of the complex of curves I: Hyperbolicity}.
\newblock {\em Inventiones Mathematicae}, 138:103--149, October 1999.

\bibitem{MCP}
John~D. McCarthy and Athanase Papadopolous.
\newblock Simplicial actions of mapping class groups.
\newblock {\em Handbook of Teichm\"uller theory}, 3, 2013.

\bibitem{BraidMeta}
Alan McLeay.
\newblock Normal subgroups of the braid group and the metaconjecture of
  {Ivanov}.
\newblock {\em \href{https://arxiv.org/abs/1801.05209}{arxiv:1801.05209}},
  2017.

\bibitem{GM92}
Geoffrey Mess.
\newblock The {Torelli} groups for genus 2 and 3 surfaces.
\newblock {\em Topology}, 31(4):775-790, 1992.

\bibitem{AP07}
Andrew Putman.
\newblock Cutting and pasting the {Torelli} group.
\newblock {\em Geom. Topol.}, 11:829-865, 2007.

\bibitem{PutmanConnect}
Andrew Putman.
\newblock A note on the connectivity of certain complexes associated to
  surfaces.
\newblock {\em L{'}Enseignement Math\'{e}matique}, (2) 54: 287-301, 2008.

\bibitem{AP12}
Andrew Putman.
\newblock Small generating sets for the {Torelli} group.
\newblock {\em Geom. Topol.}, 16(1):111-125, 2012.

\bibitem{RS09}
Kasra Rafi and Saul Schleimer.
\newblock Covers and the curve complex.
\newblock {\em Geom. Topol.}, 13(4):2141--2162, 2009.

\end{thebibliography}
\bibliographystyle{plain}

\end{document}